\documentclass[12pt]{amsart}

\usepackage{amssymb}
\usepackage{amsmath}
\usepackage{amsthm}
\usepackage{amscd}
\usepackage{mathrsfs}
\usepackage{graphicx}
\usepackage{fullpage}
\graphicspath{{Figures/}} 

\usepackage{cancel} 
\usepackage{enumerate} 
\usepackage{hyperref} 
\usepackage[usenames,dvipsnames]{color} 
\usepackage{polynom}
\usepackage{subfig} 

\vfuzz2pt 
\hfuzz2pt 
\newtheorem{thm}{Theorem}[section]

\newtheorem{cor}[thm]{Corollary}

\newtheorem{lem}[thm]{Lemma}
\newtheorem{prop}[thm]{Proposition}

\theoremstyle{definition}
\newtheorem{defn}[thm]{Definition}
\newtheorem{rem}[thm]{Remark}

\newtheorem{claim}[thm]{Claim}
\numberwithin{equation}{section}
\newcommand{\norm}[1]{\left\Vert#1\right\Vert}
\newcommand{\abs}[1]{\left\vert#1\right\vert}
\newcommand{\set}[1]{\left\{#1\right\}}
\newcommand{\Real}{\mathbb R}
\newcommand{\eps}{\varepsilon}
\newcommand{\To}{\longrightarrow}
\newcommand{\BX}{\mathbf{B}(X)}
\newcommand{\EE}{\mathcal{E}}
\newcommand{\LL}{\mathcal{L}}
\newcommand{\OO}{\mathcal{O}}
\newcommand{\AX}{\mathsf{A}(X)}
\makeatletter
\newcommand\xleftrightarrow[2][]{\ext@arrow 0099{\longleftrightarrowfill@}{#1}{#2}}
\def\longleftrightarrowfill@{\arrowfill@\leftarrow\relbar\rightarrow}
\makeatother

\ProvidesPackage{icsinclude}

\def\mc{\mathcal}
\def\frk{\mathfrak}
\def\wt{\widetilde}
\def\wh{\widehat}
\def\mb{\mathbf}
\def\bb{\mathbb}
\def\ol{\overline}


\def\onto{\twoheadrightarrow}
\def\into{\hookrightarrow}
\def\iso{\stackrel{\simeq}{\longrightarrow}}
\def\from{\leftarrow}

\def\tensor{\otimes}
\def\empt{\varnothing}
\def\bt{\bullet}
\def\he{\simeq}
\def\del{\partial}
\def\delbar{\overline{\partial}}
\def\tr{\text{tr}}


\DeclareMathOperator{\Hom}{Hom}
\DeclareMathOperator{\Ext}{Ext}
\DeclareMathOperator{\End}{End}
\DeclareMathOperator{\Aut}{Aut}
\DeclareMathOperator{\Der}{Der}
\DeclareMathOperator{\Tor}{Tor}

\def\sHom{\mc{H}om}
\def\sExt{\mc{E}xt}
\def\sEnd{\mc{E}nd}

\DeclareMathOperator{\grHom}{grHom}
\DeclareMathOperator{\grExt}{grExt}
\DeclareMathOperator{\grEnd}{grEnd}

\DeclareMathOperator{\Sch}{\mathbf{Sch}}
\DeclareMathOperator{\Vect}{{\sf Vect}}
\DeclareMathOperator{\lmod}{-{\sf mod}}
\DeclareMathOperator{\rmod}{{\sf mod}-}
\DeclareMathOperator{\grmod}{-{\sf grmod}}

\DeclareMathOperator{\Perf}{\frk{Perf}}
\DeclareMathOperator{\Dcoh}{\mb{D}^b coh}
\DeclareMathOperator{\DSg}{DSg}
\DeclareMathOperator{\coh}{coh}
\DeclareMathOperator{\Qcoh}{Qcoh}

\DeclareMathOperator{\spec}{Spec}
\DeclareMathOperator{\maxspec}{MaxSpec}
\DeclareMathOperator{\proj}{Proj}
\DeclareMathOperator{\Spec}{\bb{S}pec}
\DeclareMathOperator{\Sym}{Sym}
\def\invlim{\underleftarrow{\lim}}
\DeclareMathOperator{\Pic}{Pic}
\DeclareMathOperator{\tot}{tot}
\DeclareMathOperator{\Ind}{Ind}


\DeclareMathOperator{\gr}{gr}
\DeclareMathOperator{\id}{id}
\DeclareMathOperator{\supp}{supp}
\DeclareMathOperator{\ann}{ann}
\DeclareMathOperator{\nil}{nil}
\DeclareMathOperator{\sing}{sing}
\DeclareMathOperator{\coker}{coker}
\DeclareMathOperator{\im}{im}
\DeclareMathOperator{\eval}{eval}
\DeclareMathOperator{\cn}{cone}
\DeclareMathOperator{\Rees}{Rees}
\DeclareMathOperator{\Bl}{Bl}
\DeclareMathOperator{\Rep}{Rep}
\DeclareMathOperator{\rad}{rad}

\def\vv{{\vee\vee}}
\def\rank{\text{rank}}
\def\dim{\text{dim}}
\DeclareMathOperator{\Gr}{Gr}


\def\codim{\text{codim}}
\DeclareMathOperator{\sgn}{sgn}
\DeclareMathOperator{\mult}{mult}


\def\dolb{\Omega^{0,\bullet}}
\DeclareMathOperator{\HH}{{\sf HH}}
\def\svC{\check{\mc{C}}}

\def\Gm{\mathbb{G}_m}
\def\Cx{\C^\times}
\DeclareMathOperator{\GL}{GL}
\DeclareMathOperator{\SL}{SL}
\DeclareMathOperator{\Aff}{Aff}
\def\pr{\text{pr}}
\def\ad{\text{ad}}


\def\Qu{\noindent (\textbf{Q}) }



\def\A{\mathbb{A}}
\def\C{\mathbb{C}}
\def\N{\mathbb{N}}
\def\P{\mathbb{P}}
\def\bP{\P}
\def\Q{\mathbb{Q}}
\def\R{\mathbb{R}}
\def\bbS{\mathbb{S}}
\def\V{\mathbb{V}}
\def\Z{\mathbb{Z}}


\def\cA{\mc{A}}
\def\cB{\mc{B}}
\def\cC{\mc{C}}
\def\cD{\mc{D}}
\def\cE{\mc{E}}
\def\cF{\mc{F}}
\def\cG{\mc{G}}
\def\cH{\mc{H}}
\def\cI{\mc{I}}
\def\cJ{\mc{J}}
\def\cK{\mc{K}}
\def\cL{\mc{L}}
\def\cM{\mc{M}}
\def\O{\mc{O}}
\def\cO{\mc{O}}
\def\cP{\mc{P}}
\def\cQ{\mc{Q}}
\def\cS{\mc{S}}
\def\cT{\mc{T}}
\def\cU{\mc{U}}
\def\cV{\mc{V}}
\def\cW{\mc{W}}
\def\cX{\mc{X}}
\def\cY{\mc{Y}}


\def\bk{\mathbf{k}}

\def\bD{\mathbf{D}}
\def\bH{\mathbf{H}}
\def\bL{\mathbf{L}}
\def\bR{\mathbf{R}}

\def\Ft{\wt{F}}
\def\Kt{\wt{K}}
\def\Mt{\wt{M}}
\def\Nt{\wt{N}}
\def\Qt{\wt{Q}}
\def\Rt{\wt{R}}
\def\St{\wt{S}}
\def\Tt{\wt{T}}
\def\Ut{\wt{U}}
\def\Wt{\wt{W}}
\def\Xt{\wt{X}}
\def\Yt{\wt{Y}}
\def\Zt{\wt{Z}}

\def\cEt{\wt{\cE}}

\def\Ah{\wh{A}}
\def\Bh{\wh{B}}
\def\Gh{\wh{G}}
\def\Lh{\wh{L}}
\def\Mh{\wh{M}}
\def\Nh{\wh{N}}
\def\Qh{\wh{Q}}
\def\Rh{\wh{R}}
\def\Xh{\wh{X}}
\def\Yh{\wh{Y}}
\def\Zh{\wh{Z}}

\def\cBh{\wh{\cB}}
\def\cJh{\wh{\cJ}}

\def\Bbr{\ol{B}}
\def\Fbr{\ol{F}}
\def\Gbr{\ol{G}}
\def\Kbr{\ol{K}}
\def\Lbr{\ol{L}}
\def\Mbr{\ol{M}}
\def\Qbr{\ol{Q}}
\def\Rbr{\ol{R}}
\def\Sbr{\ol{S}}
\def\Wbr{\ol{W}}
\def\Xbr{\ol{X}}
\def\Ybr{\ol{Y}}
\def\Zbr{\ol{Z}}

\def\sfA{{\sf A}}
\def\sfC{{\sf C}}
\def\sfD{{\sf D}}
\def\sfH{{\sf H}}
\def\sfR{{\sf R}}
\def\sfT{{\sf T}}
\def\sfU{{\sf U}}

\def\Sfr{\frk{S}}
\def\Xfr{\frk{X}}
\def\Yfr{\frk{Y}}

\def\vC{\check{C}}

\def\rmH{\mathrm{H}}


\newcommand{\reg}[2]{\ensuremath{{\rm reg}(#2,#1)}}
\newcommand{\creg}[2]{\ensuremath{{\rm reg}_{\rm cont}(#2,#1)}}
\newcommand{\rego}[1]{\reg{\cO(1)}{#1}}
\newcommand{\crego}[1]{\creg{\cO(1)}{#1}}
\newcommand{\deq}{\ensuremath{\stackrel{\rm def}{=}}}
\newcommand{\st}[1]{\ensuremath{\left\{ #1 \right\}}}

\begin{document}
\title{Rational Curves on Moduli Spaces of Vector Bundles}
\author{Yusuf Mustopa}
\address{Yusuf Mustopa, Tufts University, Department of Mathematics, Bromfield-Pearson Hall, 503 Boston Avenue, Medford, MA 02155, USA}
\address{Max-Planck-Institut f\"{u}r Mathematik, Vivatsgasse 7, 53111, Bonn, Germany}
\address{Department of Mathematics, University of Massachusetts Boston,100 Morrissey Boulevard, Boston, MA 02125, USA}
\email{Yusuf.Mustopa@umb.edu}
\author{Montserrat Teixidor i Bigas}
\address{Montserrat Teixidor i Bigas, Tufts University, Department of Mathematics, Bromfield-Pearson Hall, 503 Boston Avenue, Medford, MA 02155, USA}
\email{Montserrat.TeixidoriBigas@tufts.edu}

\begin{abstract} We completely describe the components of expected dimension of the Hilbert Scheme of rational curves of fixed degree $k$ in the moduli space ${\rm SU}_{C}(r,L)$
of  semistable vector bundles of rank $r$ and determinant $L$ on a curve $C$.
 We show that for every  $k \geq 1$ there are ${\rm gcd}(r, \deg L)$ unobstructed components. 
 In addition, if $k$ is divisible by $r_1(r-r_1)(g-1)$ for $1\le r_1\le r-1$, there is an additional obstructed component of the expected dimension for each such $r_1$.
 We construct families of obstructed components and show that their generic point is not the generic vector bundle of given rank and determinant.  
 Finally, we also obtain an upper bound on the degree of rational connectedness of ${\rm SU}_{C}(r,L)$ which is linear in the dimension.
\end{abstract}
\maketitle

\section*{Introduction}

Let $C$ be a smooth projective curve of genus $g \geq 2$, let $r$ and $d$ be integers with $r > 0,$ and let $L$ be a line bundle of degree $d$ on $C.$  Throughout the paper we write $h := {\rm gcd}(r,d).$
 
 The moduli space $M :={\rm SU}_{C}(r,L)$ parametrizes semistable rank-$r$ vector bundles on $C$ with determinant $L$  up to $S-$equivalence.  It is well-known that $M$ is a normal and locally factorial projective variety of dimension $(r^{2}-1)(g-1)$ whose singularities are at worst rational and Gorenstein.   Moreover, there is an ample divisor $\Theta$ on $M$ such that ${\rm Pic}{M} \cong {\Z}\Theta$ and $K_{M} = -2h\Theta$; in particular $M$ is Fano of index $2h$. One can show that $M$ is  smooth precisely when either $h=1$ or $g=r=2$ and $d$ is even; in all other cases, the singular locus is the (nonempty) locus of equivalence classes of strictly semistable bundles.  We refer to \cite{DN} for further details on the geometry of $M$. 
Rational curves have long been a useful tool in the study of varieties in general, and of Fano varieties in particular (e.g. \cite{K,Hu1}).  
The main topic of this paper is the structure of the Hilbert scheme ${\rm Mor}_{k}(\P^{1},M)$ parametrizing rational curves $f : \P^1 \to M$ of degree $k \geq 1.$  

This subject has a long tradition. Narasimhan-Ramanan  \cite{NR2} and Newstead \cite{N}, who addressed the case of $g=r=2$ and $d$ odd, gave beautiful geometric descriptions of the space of lines in $M.$  In \cite{S}, Sun classified  curves of minimal degree and determined the minimal degree of a rational curve through a generic point of $M$.  Additional results in this direction were obtained in \cite{MS}, where the authors give constructions of rational curves of minimal degree in $M$ and in \cite{Li} where the particular case of genus 3, rank 2 and even degree is described.

The aforementioned results all deal with very particular cases of genus and rank of the vector bundle or specific degree of the rational curve.  
Up to now, the only case that has been addressed for arbitrary genus and degree of the rational curve is when $r=2$ and $d$ is odd;
 in this case, Castravet \cite{C} classified the irreducible components of the Hilbert scheme of rational curves.
  Here, we extend much of Castravet's work to the setting of arbitrary rank and degree. 
   Our first two main results completely classify the components of ${\rm Mor}_{k}(\P^{1},M)$ which have the expected dimension:

\begin{thm}\label{thmain}
 	 For all $k \geq 1$, there are precisely $h$ components of ${\rm Mor}_{k}(\P^{1},M)$ which are unobstructed (and therefore of the expected dimension).
	 They correspond to either families of lines in spaces of one-step extensions of vector bundles on $C$ or extensions of a skyscraper sheaf by a vector bundle.
 \end{thm}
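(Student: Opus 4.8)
The plan is to compute the obstruction space for a morphism $f:\P^1\to M$ and identify exactly when it vanishes, then match the resulting ``unobstructed'' locus with explicit geometric families. Recall that the tangent-obstruction theory of $\mathrm{Mor}_k(\P^1,M)$ at $[f]$ is governed by $H^0(\P^1,f^*T_M)$ (tangent space) and $H^1(\P^1,f^*T_M)$ (obstruction space), and that $[f]$ is unobstructed exactly when $H^1(\P^1,f^*T_M)=0$; in that case the component through $[f]$ has the expected dimension $\dim M + \int_{\P^1} f^*(-K_M) = (r^2-1)(g-1) + 2hk$. Since $M$ has a nonempty singular locus whenever $h>1$, the first technical point is to show that the rational curves we care about can be taken to lie in the smooth locus $M^{s}$ of stable bundles, so that $f^*T_M$ makes sense as a bundle; this should follow because a curve meeting the strictly-semistable locus can be deformed off it (the singular locus has high codimension), or one argues directly on the moduli stack.

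The core computation is to understand $f^*T_M$ for a map $f$ corresponding to a family of bundles on $C$, i.e.\ a bundle $\cE$ on $C\times\P^1$ flat over $\P^1$ with $\cE_t$ stable of rank $r$ and determinant $L$ for all $t$. Here $T_M|_{[\cE_t]}\cong H^1(C,\mathrm{ad}(\cE_t))$ (traceless endomorphisms), so $f^*T_M = R^1\pi_{\P^1*}(\mathcal{A}d(\cE))$ where $\mathcal{A}d(\cE)$ is the sheaf of traceless endomorphisms and $\pi_{\P^1}:C\times\P^1\to\P^1$ the projection. I would analyze $H^1(\P^1, R^1\pi_{\P^1*}\mathcal{A}d(\cE))$ via the Leray spectral sequence for $\pi_{\P^1}$ applied to $\mathcal{A}d(\cE)$ on $C\times\P^1$, reducing the vanishing of the obstruction to cohomological vanishing on the surface $C\times\P^1$. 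The families producing the claimed components are built from extensions: given a sub-line-bundle or more generally a destabilizing-type subsheaf, one forms one-step extensions $0\to \cF_1\to \cE\to \cF_2\to 0$ on $C$ and lets the extension class vary in a pencil, or one takes extensions of a skyscraper sheaf supported at a point by a vector bundle (Hecke-type modifications). For these, $\mathcal{A}d(\cE)$ has a filtration whose graded pieces are $\mathcal{H}om(\cF_i,\cF_j)$ twisted appropriately, and I would choose the numerical invariants $(r_1,\deg\cF_1)$ so that every relevant $R^1\pi_{\P^1*}$ of a graded piece is either zero or globally generated with vanishing $H^1$ on $\P^1$; a dimension count then shows precisely $h$ such families occur, indexed by $\deg\cF_1 \bmod (r/h)$ or equivalently by the $h$ residues allowed by $\gcd(r,d)=h$.

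The main obstacle I anticipate is the \emph{completeness} half of the statement: showing there are \emph{no other} unobstructed components, i.e.\ that any $[f]$ with $H^1(\P^1,f^*T_M)=0$ must lie in one of the $h$ families just constructed. For this I would argue that an unobstructed $f^*T_M$ forces $f^*T_M$ to be a direct sum of line bundles of nonnegative degree (or at least to have $H^1=0$), and then use the Harder--Narasimhan / restriction behavior of $\mathcal{A}d(\cE)$ along the curve to deduce that $\cE$ must degenerate along $\P^1$ in the constrained way characteristic of a pencil of extensions; concretely, the positivity forced on $f^*T_M$ should pin down the relative Harder--Narasimhan filtration of $\cE$ on $C\times\P^1$ to have a single step of the predicted rank $r_1$. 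Controlling this — ruling out ``mixed'' or higher-step configurations that might accidentally also be unobstructed — is where the real work lies, and it is presumably here that one invokes, in the higher-rank setting, the analogues of the arguments Castravet \cite{C} used for $r=2$, $d$ odd, together with the minimal-degree classification of Sun \cite{S}. Finally, a short count of connected components of each family (each is a fibration over a product of Quot/Jacobian-type schemes with rational, hence irreducible, fibers) confirms that each numerical type contributes exactly one component, giving $h$ in total.
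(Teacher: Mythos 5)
Your setup matches the paper's: unobstructedness is detected by $H^{1}(\P^{1},f^{*}T_{M})=0$, one passes to a bundle $\cE$ on $C\times\P^{1}$ (Sun's Lemma \ref{sun-lem}), and the criterion becomes a condition on the generic splitting type of $\cE$ on the fibers $\{P\}\times\P^{1}$ (Lemma \ref{unobs}: the degrees occurring can differ by at most one). Your construction of the candidate families (pencils of extensions $0\to E_{1}\boxtimes\cO(1)\to\cE\to E_{2}\to 0$ and extensions by skyscrapers), together with the count of $h$ numerical types, is also the paper's route, although the indexing is by the $h$ solutions $(r_{1},d_{1})$ of $\bar d\,r_{1}-\bar r\,d_{1}=k$ with $0\le r_{1}<r$ (the rank $r_{1}$ runs over a residue class mod $\bar r=r/h$), not by $\deg\cF_{1}\bmod (r/h)$ as you write.

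The genuine gap is the completeness half, which you explicitly defer (``this is where the real work lies'') without supplying the argument. Note first that ruling out three-or-more-step configurations is not actually the hard part: Lemma \ref{unobs} kills them immediately, since a splitting type with three distinct degrees forces some gap $\alpha_{i}-\alpha_{j}\ge 2$ and hence $H^{1}\neq 0$. The real issue, which your proposal does not resolve, is: given that the generic splitting type is $\cO(1)^{r_{1}}\oplus\cO^{r_{2}}$, why is the family necessarily one of the $h$ constructed ones, rather than, say, an extension whose HN quotients $\cF_{i}$ are \emph{not} pullbacks from $C$ but have $c_{2}(\cF_{i}')>0$? The paper closes this with two specific ingredients you are missing. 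First, Lemma \ref{mustbeeltr}: any torsion-free sheaf on $C\times\P^{1}$ with generically trivial splitting type is obtained from the pullback of a bundle on $C$ by a finite chain of elementary transformations along fibers (an induction on $c_{2}$, using Lemma \ref{pbfC}). Second, Lemma \ref{mixed}: the ``mixed'' family obtained by combining a genuine two-step extension with a nontrivial elementary transformation (skyscraper quotient of degree $\deg D>0$) has dimension $\dim M+2hk-2r_{1}\deg D$, strictly \emph{below} the expected dimension; since the expected dimension is the minimum possible dimension of any component, such a family cannot be a component at all. This dimension count, not a deformation or positivity argument on $f^{*}T_{M}$, is what eliminates all remaining configurations and pins the unobstructed components down to exactly the $h$ pure types. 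Without an argument of this kind your proof does not establish the word ``precisely'' in the statement.
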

 
 \begin{thm}\label{thext}
 There is an obstructed component of ${\rm Mor}_{k}(\P^{1},M)$ having the expected dimension if and only if $k$ is divisible by $r_1(r-r_1)(g-1)$ for some $r_1, 1\le r_1\le r-1$.  
 There is a unique such component for each $r_1$, and it corresponds to families of rational curves of higher degrees in spaces of extensions of vector bundles on $C$.  
 \end{thm}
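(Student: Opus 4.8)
My plan is to recast Theorem~\ref{thext} as a construction‑plus‑classification problem on the surface $C\times\P^1$, extending to arbitrary rank the deformation‑theoretic strategy of Castravet~\cite{C} in rank two. First I would set up a dictionary: a morphism $f\colon\P^1\to M$ of degree $k$ whose image meets the stable locus corresponds, up to a harmless twist by a line bundle from $\P^1$, to a bundle $\cE$ on $C\times\P^1$, flat over $\P^1$, with every $\cE_t$ semistable of rank $r$ and determinant $L$ and with $f^*\cO_M(\Theta)\cong\cO_{\P^1}(k)$. Writing $p$ for the projection to $\P^1$ one has $f^*T_M\cong R^1p_*\sEnd_0(\cE)$ and $p_*\sEnd_0(\cE)=0$ over the stable locus, so the Leray spectral sequence gives $H^0(\P^1,f^*T_M)\cong H^1(C\times\P^1,\sEnd_0(\cE))$ and, for the obstruction space, $H^1(\P^1,f^*T_M)\cong H^2(C\times\P^1,\sEnd_0(\cE))$; by Serre duality on the surface the latter is dual to the space of trace‑free twisted homomorphisms $\Hom(\cE,\cE\otimes p_C^*\omega_C\otimes\cO_{\P^1}(-2))$. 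Consequently $f$ lies on an unobstructed component exactly when that space vanishes at $f$, and a component $Z$ has the expected dimension $2hk+(r^2-1)(g-1)$ if and only if, at a general $f\in Z$, the quadratic part of the obstruction cuts out $T_fZ$ to first order, i.e. is nondegenerate on a complement of $T_fZ$ in $H^0(\P^1,f^*T_M)$ (a complement of dimension $\dim H^1(\P^1,f^*T_M)$).

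For the ``if'' direction I would build, for each $r_1$ with $1\le r_1\le r-1$ (set $r_2:=r-r_1$) for which $r_1r_2(g-1)\mid k$, an explicit family $\cZ_{r_1}$. Pick integers $d_1,d_2$ with $d_1+d_2=d$ realizing $\mu(Q)-\mu(S)=g-1$ for $S,Q$ of ranks $r_1,r_2$ and degrees $d_1,d_2$, and let $S,Q$ vary over general stable bundles of this type with $\det S\otimes\det Q\cong L$; pulling back the universal extension on $\P(\Ext^1(Q,S))\times C$ along a degree‑$e$ map $\phi\colon\P^1\to\P(\Ext^1(Q,S))$ produces a family $0\to p_C^*S\otimes\cO_{\P^1}(e)\to\cE\to p_C^*Q\to0$ whose general member is stable. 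A determinant‑of‑cohomology computation shows $f^*\cO_M(\Theta)$ has degree $\tfrac{e}{h}r_1r_2(\mu(Q)-\mu(S))$, so choosing $e=hk/(r_1r_2(g-1))$ gives degree $k$. Counting parameters — the moduli of $(S,Q)$ plus degree‑$e$ maps to $\P(\Ext^1(Q,S))$, of dimension $r_1r_2(\mu(Q)-\mu(S)+g-1)-1$ — one finds $\dim\cZ_{r_1}=2hk+(r^2-1)(g-1)$ precisely when $\mu(Q)-\mu(S)=g-1$: for $e=1$ the identity holds identically, giving the unobstructed families of Theorem~\ref{thmain}, whereas for $e\ge2$ it forces the slope gap, hence the divisibility $r_1r_2(g-1)\mid k$ (realizability of $d_1,d_2$ then pins down which $k$ occur). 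Finally, filtering $\Hom(\cE,\cE\otimes p_C^*\omega_C\otimes\cO_{\P^1}(-2))$ through the two‑step structure shows every such twisted endomorphism is nilpotent and factors through $p_C^*Q\to p_C^*(S\otimes\omega_C)\otimes\cO_{\P^1}(e-2)$, so the space has dimension $r_1r_2(2g-2)(e-1)$; it is nonzero exactly when $e\ge2$. Thus $\cZ_{r_1}$ is obstructed, and I would finish by checking that the quadratic obstruction is nondegenerate transverse to $T_f\cZ_{r_1}$, so $\cZ_{r_1}$ is a genuine component of the expected dimension.

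For the ``only if'' direction and uniqueness I would run the converse. Let $Z$ be an obstructed component of ${\rm Mor}_{k}(\P^{1},M)$ of the expected dimension, $f\in Z$ general, $\cE$ the associated family; Step~1 produces a nonzero $\psi\colon\cE\to\cE\otimes p_C^*\omega_C\otimes\cO_{\P^1}(-2)$. Using that $Z$ has the expected dimension (so $\cE$ must be as general as possible among families carrying such a $\psi$), I would show $\psi$ is nilpotent of constant fibre rank, whence the saturation of $\ker\psi$, refined by the relative Harder--Narasimhan filtration, yields a two‑step filtration $0\to\cE_1\to\cE\to\cE_2\to0$ by flat families of semistable bundles with $r_1:=\rank\cE_1$. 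Matching dimensions against Step~2 then forces $\cE_1\cong p_C^*S\otimes\cO_{\P^1}(e)$ and $\cE_2\cong p_C^*Q$ for general $S,Q$ with $\mu(Q)-\mu(S)=g-1$, and forces the classifying map to be a general degree‑$e$ map, so $Z=\cZ_{r_1}$. Since each $\cZ_{r_1}$ is irreducible and $r_1$ is a deformation invariant, there is exactly one obstructed component of the expected dimension for each admissible $r_1$, which is the assertion.

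I expect the real work to lie in two places. First, in the ``if'' direction, verifying that the quadratic obstruction is nondegenerate transverse to $T_f\cZ_{r_1}$ — equivalently, that $\cZ_{r_1}$ is not contained in a larger component — which calls for an explicit description of the cup product $H^0(\P^1,f^*T_M)\times H^0(\P^1,f^*T_M)\to H^1(\P^1,f^*T_M)$ in terms of the extension data. Second, in the converse, controlling $\psi$ for an \emph{arbitrary} obstructed curve of the expected dimension, most notably excluding the case where $\psi$ has a nonzero ``eigenvalue'' and thus genuine spectral data on a branched cover of $C$: one must show that curves of that shape fill out loci of dimension strictly greater than expected, so they produce no new component of the expected dimension.
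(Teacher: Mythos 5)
Your construction in the ``if'' direction is essentially the paper's: the family obtained by pulling back the universal extension along a degree-$e$ map to $\P(\Ext^1(Q,S))$ is exactly the projectivization $\mathbb{P}_a$ of $R^1p_*\sHom(\cE_2,\cE_1\boxtimes\cO_{\P^1}(a))$ with $a=e$ in Lemma \ref{prop:exttw}, your degree formula agrees with (\ref{degree}), and the observation that ``dimension $=$ expected dimension'' forces $\mu(Q)-\mu(S)=g-1$, hence $r_1(r-r_1)(g-1)\mid k$, is precisely the computation in Theorem \ref{excomp}. The genuine gap is in your verification that these families are obstructed components. First, your dimension count for the obstruction space is wrong: filtering $\Hom(\cE,\cE\otimes p_C^*\omega_C\otimes\cO_{\P^1}(-2))$ through the two-step structure, every term except $\Hom\bigl(Q\boxtimes\cO,\,(S\otimes\omega_C)\boxtimes\cO(e-2)\bigr)$ dies, so the space is $\Hom(Q,S\otimes\omega_C)\otimes H^0(\cO_{\P^1}(e-2))$, of dimension $(e-1)\,h^1(S^\vee\otimes Q)$, not $r_1r_2(2g-2)(e-1)$. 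In the critical case $\mu(Q)-\mu(S)=g-1$ one has $\chi(S^\vee\otimes Q)=0$, so this equals $(e-1)\,h^0(S^\vee\otimes Q)$, and whether that is nonzero at the \emph{generic} point of your family is exactly the delicate issue (a theta-divisor condition on the pair $(S,Q)$); your blanket assertion ``nonzero exactly when $e\ge 2$'' is not established. Second, the step ``check that the quadratic obstruction is nondegenerate transverse to $T_f\cZ_{r_1}$'' is named but not performed, and no mechanism for computing the cup product is supplied; this is where all the work lies if one wants to show $\cZ_{r_1}$ is an actual component rather than a subvariety of a larger one.

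The paper takes a different, more enumerative route that avoids analyzing the obstruction map altogether. Obstructedness is read off from the generic splitting type of $\cE|_{\{P\}\times\P^1}$ (Lemma \ref{unobs}); completeness and uniqueness are obtained not by reconstructing a filtration from a twisted endomorphism $\psi$, but by applying the relative Harder--Narasimhan filtration and Lemma \ref{mustbeeltr} to show that \emph{every} family arises from iterated extensions and elementary transformations of pullbacks, producing a finite list of explicit families (Lemmas \ref{prop:exttw}, \ref{torsexttw}, \ref{fam2se}) whose dimensions are then compared with $\dim M+2hk$; the fact that the equality-case family is not absorbed into another component is argued by showing its dimension is an increasing function of $a$ at fixed $k$ and by the splitting-type bookkeeping (Corollary \ref{excompr=2}). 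Your proposed converse --- extracting a two-step filtration from the kernel of a nilpotent $\psi$ of constant fiber rank, and excluding spectral data on covers of $C$ --- is not how the paper proceeds, and as outlined it begs the main questions: why $\psi$ is nilpotent of constant rank, why its kernel is flat over $\P^1$ and semistable on fibers, and why the resulting family must coincide with $\cZ_{r_1}$ rather than with one of the multi-step or elementary-transformation families that the paper must (and does) rule out by separate dimension counts.
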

 
 We also determine some obstructed components of ${\rm Mor}_{k}(\P^{1},M)$ that are not of the expected dimension, and we show that their associated rational curves only fill up a proper closed subvariety of $M$, 
 i.e.~ that the generic point on any of these rational curves is not a generic stable vector bundle of the given rank and determinant.   

\begin{thm}\label{thoth}
The  components of ${\rm Mor}_{k}(\P^{1},M)$ not listed in Theorems \ref{thmain} and \ref{thext} are obstructed components 
corresponding to rational curves of higher degree  in spaces of extensions of vector bundles on $C$ or to rational curves in multiple step extensions of vector bundles.
The points of these rational curves correspond to vector bundles on $C$ that fill a proper subvariety of $M$.
 \end{thm}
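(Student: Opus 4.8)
The plan is to combine the structural description of rational curves in $M$ from the preceding sections with a parameter count for the loci of bundles that can occur on such curves; the key input is the Lange stratification of $M$ by the maximal degree of a subbundle of prescribed rank. First I would dispose of the classification statement. By the structure theory underlying Theorems~\ref{thmain} and~\ref{thext}, a general member $f$ of a component $\cC$ of $\mathrm{Mor}_k(\P^1,M)$ determines a bundle $\cE$ on $\P^1\times C$, flat over $\P^1$ with semistable restriction of determinant $L$ to each vertical fibre, whose restriction to a general horizontal ruling has a fixed splitting type $\underline a=(a_1\ge\cdots\ge a_r)$. Grouping $\underline a$ by its distinct values yields a canonical filtration $0=\cE^{(0)}\subset\cE^{(1)}\subset\cdots\subset\cE^{(m)}=\cE$ by saturated subsheaves which, over a general $t\in\P^1$, restricts to a filtration $0\subset F_1\subset\cdots\subset F_m=E_t$ of $E_t$ by subbundles; write $r_i,d_i$ for the rank and degree of the quotient $Q_i=F_i/F_{i-1}$, invariants fixed by $\cC$. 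Thus $f$ is a rational curve of degree $k$ in the space of $m$-step extensions with these invariants, a one-step extension precisely when $m=2$. By Theorems~\ref{thmain} and~\ref{thext}, every component that is unobstructed or obstructed of the expected dimension is a family of lines in a one-step extension space, a Hecke-type family, or one of the distinguished higher-degree families in a one-step extension space (those with $r_1(r-r_1)(g-1)\mid k$). Hence any other component is a family of rational curves of higher degree in a one-step extension space, or a family of rational curves in an $m$-step extension space with $m\ge3$; this is the first assertion.

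For the second assertion, fix such a $\cC$ and let $Z\subseteq M$ be the closure of the union of the rational curves it parametrizes. I would first identify $Z$: once graded pieces $Q_1,\dots,Q_m$ and an $m$-step extension class are chosen, the resulting bundle $E$ lies on an entire family of rational curves of the prescribed type — lines, conics, and so on through the corresponding point of the relevant parameter space — provided a general extension of that type is stable with determinant $L$, which holds by the earlier sections; and conversely every point of $Z$ arises this way. So $Z$ is, up to closure, the locus of bundles in $M$ admitting an $m$-step extension structure with the fixed invariants $(r_i,d_i)$, and a parameter count (taking the $Q_i$ general in their moduli, so that $\Hom(Q_j,Q_i)=0$ for $i<j$) gives
\[
\dim(Z) \le \bigl(r^2-1-\sum\nolimits_{i<j}r_ir_j\bigr)(g-1) + \sum\nolimits_{i<j}\bigl(r_id_j-r_jd_i\bigr).
\]
Since $\dim(M)=(r^2-1)(g-1)$, it suffices to establish the numerical inequality $\sum_{i<j}(r_id_j-r_jd_i)<\bigl(\sum_{i<j}r_ir_j\bigr)(g-1)$, and it is exactly here that the restriction to the non-distinguished types is used.

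For $m=2$ the inequality reads $r_1d-rd_1<r_1(r-r_1)(g-1)$; writing $s_{r_1}(E)=r_1d-r\max\{\deg F:\rank(F)=r_1\}$ for the Segre invariant, and noting that every bundle on a curve in $\cC$ carries the rank-$r_1$ subbundle $F_1$ of degree $d_1$, this says that such bundles have $s_{r_1}$ strictly below its generic value $r_1(r-r_1)(g-1)$, so that $Z$ lies in a proper Lange stratum of $M$. That the inequality is strict here — rather than the equality picking out the Theorem~\ref{thext} components — is precisely what the classification of the preceding sections yields, via the formula expressing $k$ in terms of $\underline a$ and $(r_i,d_i)$: a one-step family with $r_1d-rd_1\ge r_1(r-r_1)(g-1)$ does not, for its general member, arise as the canonical filtration (the maximal rank-$r_1$ subbundle is larger) unless it is one of the distinguished families, so it does not occur among the remaining components.

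The hard part will be the case $m\ge3$, where one must rule out that an $m$-step extension of ``generic-looking'' pieces is nevertheless a general stable bundle. I would again apply the Lange stratification at every level of the flag: a subbundle $F_j\subset E$ of rank $r_1+\cdots+r_j$ and degree $d_1+\cdots+d_j$ forces $s_{r_1+\cdots+r_j}(E)\le(r_1+\cdots+r_j)d-r(d_1+\cdots+d_j)$, and the numerics imposed by the existence of a degree-$k$ rational curve of that precise type, together with semistability of $E$ (which gives $(r_1+\cdots+r_j)d\ge r(d_1+\cdots+d_j)$ for all $j$), should force at least one of these invariants strictly below its generic threshold; equivalently, for $m\ge3$ the $(g-1)$-coefficient $r^2-1-\sum_{i<j}r_ir_j$ falls below $r^2-1$ by strictly more than it does for the Theorem~\ref{thext} components, so the degree term $\sum_{i<j}(r_id_j-r_jd_i)$, which is controlled by $k$ through the degree formula, cannot make up the deficit. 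Carrying out this comparison uniformly over all admissible numerical types — and in particular pinning down the exact degree formula for higher-degree rational curves in $m$-step extension spaces — is the principal technical obstacle. Once that is done, we obtain $\dim(Z(\cC))<\dim(M)$ for every $\cC$ outside the lists of Theorems~\ref{thmain} and~\ref{thext}, which is exactly the assertion that the general point of any such rational curve is not a general stable bundle of rank $r$ and determinant $L$.
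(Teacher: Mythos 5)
Your overall architecture is the right one and matches the paper's: reduce via the relative Harder--Narasimhan filtration to higher-degree curves in one-step extension spaces or curves in multi-step extension spaces, and then use the Segre/Lange stratification (Proposition \ref{Lange}) to show the resulting bundles are special. But the step you flag as ``the principal technical obstacle'' for $m\ge 3$ is precisely the content of the paper's Theorem \ref{comp2se}, and it is not a routine verification. The paper's argument runs: (a) the dimension of the $l$-step family (Lemma \ref{fam2se}) must be at least the expected dimension $\dim M+2hk$ for the family to be a component, which after regrouping is the inequality (\ref{longineq}); (b) if the generic bundle were generic in $M$, then Proposition \ref{Lange} applied to \emph{each} subbundle $E'_j\subset E$ of the flag gives the $l-1$ inequalities (\ref{inj}); (c) a combinatorial Claim --- multiplying the $j$-th inequality in (\ref{inj}) by $r_1+\dots+r_{j-1}+r_{j+2}+\dots+r_l$, summing over $j$, and repeatedly applying the identity (\ref{IdentAs}) among the quantities $A_{ik}=r_id_k-r_kd_i-r_ir_k(g-1)$ --- shows that (\ref{inj}) forces $\sum_{i<j-1}(j-i-1)A_{ij}\ge \frac{g-1}{r}\sum_{m<n<p}r_mr_nr_p>0$, which is incompatible with (\ref{longineq}). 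Without this weighted-sum manipulation the contradiction does not appear: for instance, with $l=3$ and all $a_i=1$ the component condition (\ref{longineq}) only constrains $A_{13}$, and one needs the identity $r_3A_{12}+r_1A_{23}+(r_1+r_3)A_{13}=rA_{13}-r_1r_2r_3(g-1)$ to convert the two flag-level inequalities into a lower bound on $A_{13}$.

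A second, related problem is that your substitute criterion --- the aggregate inequality $\sum_{i<j}(r_id_j-r_jd_i)<\bigl(\sum_{i<j}r_ir_j\bigr)(g-1)$, i.e.\ $\sum_{i<j}A_{ij}<0$, needed to make your direct bound on $\dim Z$ conclude --- is neither what the paper proves nor an obvious consequence of it. The paper only shows that \emph{at least one} of the flag-level inequalities (\ref{inj}) must fail, which already places $Z$ inside a proper Lange stratum via (\ref{inclLanloc}); failure of one such inequality does not control the sign of $\sum_{i<j}A_{ij}$, since the remaining $A_{ij}$ can be large and positive. You should therefore either prove your aggregate inequality from the component condition (which would require its own combinatorial argument) or switch to the paper's mechanism of exhibiting a single violated Segre bound. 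Finally, for the two-step case with $a>1$ your justification of strictness conflates the relative Harder--Narasimhan subsheaf of $\cE$ on $C\times\P^1$ with the maximal rank-$r_1$ subbundle of the fibre $E_t$; the correct argument is the dimension count of Theorem \ref{excomp}, which shows that $r_1d-rd_1>r_1(r-r_1)(g-1)$ forces the family below the expected dimension, so it cannot be a component, leaving only the strict case (special bundles) and the equality case (Theorem \ref{thext}).
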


Rational curves of minimal degree and their tangent directions have been used to study the deformation theory of $M$ (e.g.~ \cite{Hu2,HuR}).
 There has been a lot of interest in studying uniruled varieties, that is varieties covered by rational curves. 
One of the most interesting invariants of a uniruled variety is the minimum degree of a rational curve through a generic point of the variety.
This invariant was determined for $M$ in \cite{S}; a natural next step is to ask for the minimal degree of a rational curve between two generic points \cite{KMM}.
 We answer this question here  looking at rational connectivity and computing the minimum degree of an irreducible rational curve containing two generic points of $M$ 
 (Proposition \ref{prop:rat-conn}).

An important question about rational curves on a Fano variety $X$ comes from Batyrev's conjecture on the growth rate of the number of components of ${\rm Mor}_{k}(\P^{1},X)$ 
as $k$ increases (e.g. \cite{LT}).  We hope that our description of components will add to the reservoir of examples against which to check this conjecture.

In Section \ref{secprel}, we review an equation giving the degree of a curve in $M$ using the natural decomposition of a vector bundle on ${\mathbb P}^1$ as direct sum of line bundles.
We also prove a criteria that allows to describe all families of rational curves in $M$ in terms of extensions.
In Section \ref{secuncomp}  we construct the unobstructed components of the Hilbert space and show that these are all the unobstructed components (see Theorem \ref{teornumcomp}).
Our main tool is the stability of a generic extension of generic vector bundles of given rank and degree.
In Section \ref{secadcomp}, we construct the additional components of Theorem \ref{thext}. We also construct additional families of vector bundles and prove Theorem \ref{thoth}.
 Finally, in Section  \ref{secratc2points}, we consider the degree of rational curves containing two generic points in $M$.

\medskip
\textbf{Acknowledgments:}  We would like to thank Ana-Maria Castravet, Brian Lehmann, Sukhendu Mehrotra, Swarnava Mukhopadyay, Peter Newstead, and Hacen Zelaci for valuable discussions and correspondence related to this work.   The first author was supported by the Max-Planck-Institut f\"{u}r Mathematik while part of this work was carried out; he would like to thank them for their hospitality and excellent working conditions. 
     
\section{Preliminaries}\label{secprel}

In what follows, we fix a line bundle $L$ of degree $d$ on $C$ and denote ${\rm SU}_{C}(r,L)$ by $M.$

The Zariski tangent space to the moduli space ${U}_{C}(r,d)$ parametrizing semistable bundles of rank $r$ and degree $d$ at a point corresponding to a stable bundle $E$ 
 may be naturally identified with $H^1(C, E^* \otimes E)$. 
 The trace map ${\rm tr} :  E^* \otimes E \to \cO_{C}.$ induces a decomposition 
\[ E^* \otimes E\cong {\mathcal O}_C\oplus {\cH}om_0(E)\]
where $\cO_{C}$ corresponds to homotheties and ${\cH}om_0(E)$ denotes traceless endomorphisms of $E$.  The derivative of the determinant map ${\rm det}_{r,d} : U_{C}(r,d) \to M$ can be identified with the map
 \begin{equation*}
 	H^{1}({\rm tr}) : H^{1}(C, E^* \otimes E) \to H^{1}(C,\cO_{C})
 \end{equation*}
induced by the trace map ${\rm tr}: E^* \otimes E \to \cO_{C}.$

  The tangent space to $M$ at $E$ can be identified with $H^1(C, {\cH}om_0(E))$, which has dimension $(r^{2}-1)(g-1).$
  Consider the determinant map  ${\rm det}_{r,d}: U_{C}(r,d) \to {\rm Pic}^{d}(C)$. 
  The fibers of ${\rm det}_{r,d}$ are all isomorphic to $M,$ as one can go from one fiber to any other by tensoring with a suitable line bundle of degree zero.  
 
Throughout the paper, we will write $h:=(r,d), \overline{r}:=r/h$ and $\overline{d} := d/h.$ 
 If $F$ is a semistable vector bundle of rank $\overline{r}$ and degree $\overline{r}(g-1)-\overline{d},$ then $E \otimes F$ is semistable of slope $g-1$. 
 For a generic choice of $F$, the locus $\{E \in M : h^{0}(E \otimes F) > 0\}$ is a {\bf proper} subset of $M$ and  then the support of an ample Cartier divisor. 
   The linear equivalence class of this divisor, which is denoted by $\Theta,$ is independent of $F.$  It is known that ${\rm Pic}(M) \cong \Z{\Theta}$ and that $K_{M} = -2h\Theta.$

\begin{defn} 
	\label{defdeg}  
	If $C'$ is a smooth projective curve of genus $g'$ and $f : C' \to M$ is a morphism, the degree of $f$ is $\deg(f) :=  c_{1}(f^*\Theta)$.
\end{defn}

We are interested in the Hilbert scheme ${\rm Mor}_{k}(C',M)$ parametrizing morphisms from $C'$ to $M$ of degree $k \geq 1$.  It is well-known that the Zariski tangent space to the Hilbert scheme at $f$ is $H^0(C',f^{\ast}T_{M})$, and that $f$ is unobstructed if $H^{1}(C',f^{\ast}T_{M})=0.$  

\begin{lem}
	The expected dimension of a component of ${\rm Mor}_{k}(C',M)$ is its  minimum possible dimension 
	\[ 2hk + (r^{2}-1)(g-1)(1-g')\]
\end{lem}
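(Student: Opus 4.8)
The plan is to identify the ``minimum possible dimension'' with the deformation-theoretic lower bound $\chi(C',f^{\ast}T_{M})$ and then to evaluate that Euler characteristic by Riemann--Roch on $C'$.

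First I would invoke the facts recalled just above the statement: the Zariski tangent space to ${\rm Mor}_{k}(C',M)$ at $f$ is $H^{0}(C',f^{\ast}T_{M})$, and the obstructions to deforming $f$ lie in $H^{1}(C',f^{\ast}T_{M})$. From the general principle that a scheme near a point has local dimension at least (dimension of tangent space) $-$ (dimension of obstruction space), it follows that every component of ${\rm Mor}_{k}(C',M)$ containing $[f]$ has dimension at least
\[ h^{0}(C',f^{\ast}T_{M}) - h^{1}(C',f^{\ast}T_{M}) = \chi(C',f^{\ast}T_{M}), \]
and this Euler characteristic is precisely what is meant by the expected dimension of that component. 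So the content of the lemma is the evaluation of $\chi(C',f^{\ast}T_{M})$, which does not depend on the particular $f$ chosen in the component.

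Next I would apply Riemann--Roch to the vector bundle $f^{\ast}T_{M}$ on the curve $C'$ of genus $g'$:
\[ \chi(C',f^{\ast}T_{M}) = \deg(f^{\ast}T_{M}) + \rank(T_{M})\,(1-g'). \]
Here $\rank(T_{M}) = \dim M = (r^{2}-1)(g-1)$, as recalled in the preliminaries. For the degree term I would use $\deg(f^{\ast}T_{M}) = \deg(f^{\ast}\det T_{M}) = \deg(f^{\ast}(-K_{M}))$, and then the identity $K_{M} = -2h\Theta$ together with Definition \ref{defdeg} gives $\deg(f^{\ast}T_{M}) = 2h\cdot c_{1}(f^{\ast}\Theta) = 2h\deg(f) = 2hk$. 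Substituting yields $\chi(C',f^{\ast}T_{M}) = 2hk + (r^{2}-1)(g-1)(1-g')$, which is the asserted formula.

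The computation is routine; the only point requiring care is that the tangent and obstruction description presupposes that $M$ is smooth along the image of $f$. For the families of rational curves studied in the later sections this holds because the maps factor through the stable (hence smooth) locus, and in any event the formula above depends only on the rank and first Chern class of the relevant tangent complex, so it is unchanged if one must argue over the singular locus with a perfect cotangent complex. I would single out the identity $\deg f^{\ast}T_{M} = 2hk$ as the one substantive ingredient, since it is exactly the place where the Fano index $2h$ of $M$ enters the expected-dimension count.
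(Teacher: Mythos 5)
Your proof is correct and follows essentially the same route as the paper: Riemann--Roch applied to $f^{\ast}T_{M}$ on $C'$, with the degree term computed from $K_{M}=-2h\Theta$ and Definition \ref{defdeg}. The extra details you supply (the deformation-theoretic justification that $\chi(C',f^{\ast}T_{M})$ is a lower bound for the local dimension, and the remark about smoothness of $M$ along the image) are consistent with what the paper leaves implicit.
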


\begin{proof}
	We have from Riemann-Roch that
\begin{equation*}
	\chi(C',f^{\ast}T_{M}) = {\rm deg}(f^{\ast}T_{M}) + \rank (f^{\ast}T_{M})(1-g') =  {\rm deg}(f^{\ast}T_{M}) + {\rm dim}(M)(1-g') 
\end{equation*}
\begin{equation*}
	= -f_{\ast}[C'] \cdot K_{M} + (r^{2}-1)(g-1)(1-g') = 2hk + (r^{2}-1)(g-1)(1-g')
\end{equation*}
Thus $2hk + (r^{2}-1)(g-1)(1-g')$ is the minimal dimension of a component of the Hilbert scheme and in fact the expected dimension. 
  \end{proof}
 
 For the rest of the paper, we focus on the case in which $C'={\mathbb P}^1$.
 
There is a projective bundle $\mathcal P$ and a vector bundle $\cA_{0}$ on $C\times M$  such that for all $E \in M,$ we have ${\mathcal P}_{|C\times \{ E\}} \cong {\mathbb P}(E)$ and ${\cH}om_0(E) \cong {\mathcal A}_{0}|_{C\times \{ E\}}$. 
  If $p_{M} : C \times M \to M$ is the projection map, we also have $T_{M} \cong R^{1}p_{M\ast}\cA_{0}.$ 
 There exists a vector bundle $\widetilde{\cE}$ on $C \times M$ such that $\widetilde{\cE}|_{C \times [E]} \cong E$ for all $[E] \in M$ 
 (in particular, $\cP \cong \mathbb{P}(\widetilde{\cE})$) and $\cA_{0} \cong {\cH}om_{0}(\widetilde{\cE})$ precisely when $h=1$; in this case, $\widetilde{\cE}$ is a Poincar\'e sheaf on $C \times M.$  
 
 The following is proved in  \cite{S} Lemma 2.1. We include it here  for ease of citation.
 \begin{lem}
 	\label{sun-lem}
 	For any $f \in {\rm Mor}_{k}(\P^1,M)$ there exists a vector bundle ${\mathcal E}$ on $C\times {\mathbb P}^1$ such that 
	${\mathcal  E}_{|C\times \{ t\}}=f(t)$ for all $t \in \P^1$ and ${\cH}om_0({\mathcal E})=(1_{C} \times f)^*({\mathcal A}_0)$. \hfill \qedsymbol
 \end{lem}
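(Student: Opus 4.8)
The plan is to produce the vector bundle $\mathcal{E}$ on $C \times \P^1$ directly by pulling back the universal objects on $C \times M$ described just above the statement, and then to identify its traceless endomorphism bundle. The subtlety is that a Poincar\'e sheaf $\widetilde{\cE}$ exists on $C \times M$ only when $h = 1$, so I cannot simply set $\mathcal{E} = (1_C \times f)^* \widetilde{\cE}$ in general; the main work is to circumvent this obstruction.

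First I would reduce to the local question on $\P^1$. Cover $\P^1$ by affine (or small) opens $\{U_i\}$ over each of which, working \'etale-locally or using that $M$ carries a Poincar\'e sheaf \emph{locally} in the relevant topology, there is a bundle $\mathcal{E}_i$ on $C \times U_i$ restricting to $f(t)$ fiberwise. On overlaps $C \times U_{ij}$ the two bundles $\mathcal{E}_i$ and $\mathcal{E}_j$ restrict to the same bundle on each slice, so they differ by tensoring with a line bundle pulled back from $U_{ij}$; this produces a $1$-cocycle $\{L_{ij}\} \in H^1(\P^1, \OO^*) = \mathrm{Pic}(\P^1) = \Z$. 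If this class vanishes one can glue to a global $\mathcal{E}$; in general, I would instead argue that one can \emph{twist} the local choices $\mathcal{E}_i$ by pulled-back line bundles $p_{U_i}^* M_i$ to kill the cocycle. The key point making this possible is that $\mathrm{Pic}(\P^1)$ is generated by $\OO(1)$ and each local piece can be renormalized; concretely, one fixes a point $x_0 \in C$ and uses the rigidification of $\mathcal{E}_i$ along $\{x_0\} \times U_i$ (choosing $\mathcal{E}_i$ so that $\det(\mathcal{E}_i|_{\{x_0\}\times U_i})$ is trivial, or with a fixed prescribed class), which pins down $\mathcal{E}_i$ up to a line bundle on $U_i$ that is trivial since $U_i$ is affine, and then checks that the transition data actually glues. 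This is exactly the content of \cite{S} Lemma 2.1 and I would invoke the argument there rather than reprove the descent in full.

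Granting the existence of $\mathcal{E}$ with $\mathcal{E}|_{C \times \{t\}} \cong f(t)$, the second assertion is formal. The trace splitting $\mathcal{E}^* \otimes \mathcal{E} \cong \OO_{C \times \P^1} \oplus \cH om_0(\mathcal{E})$ holds relatively over $\P^1$ since the characteristic does not divide $r$, and it is compatible with the analogous splitting $\widetilde{\cE}^* \otimes \widetilde{\cE} \cong \OO \oplus \cA_0$ (resp. its local incarnations) on $C \times M$. Because $\cH om_0(\mathcal{E})$ depends only on $\mathrm{PGL}_r$-data — that is, on $\mathbb{P}(\mathcal{E})$ rather than on $\mathcal{E}$ itself — and $\mathbb{P}(\mathcal{E}_i) \cong (1_C \times f)^* \cP$ on each $C \times U_i$ by construction, the bundles $\cH om_0(\mathcal{E})|_{C \times U_i}$ and $(1_C \times f)^* \cA_0|_{C\times U_i}$ agree, and the twisting by pulled-back line bundles used above does not affect $\cH om_0$. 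Hence the local identifications patch to a global isomorphism $\cH om_0(\mathcal{E}) \cong (1_C \times f)^* \cA_0$.

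The main obstacle, then, is purely the non-existence of a global Poincar\'e sheaf when $h > 1$: everything else (the trace splitting, the fiberwise identification, the $\mathrm{PGL}_r$-invariance of $\cH om_0$) is routine. I expect to handle this exactly as in \cite[Lemma~2.1]{S}, namely by exploiting that the only ambiguity in locally rigidified families of bundles on $\P^1$ is measured by $\mathrm{Pic}(\P^1) \cong \Z$ and can be absorbed into a global twist, so no genuine cohomological obstruction survives. For this reason I would keep the proof short, stating the fiberwise construction, citing \cite{S} for the gluing, and then deducing the statement about $\cH om_0(\mathcal{E})$ from the compatibility of trace decompositions.
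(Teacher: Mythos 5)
The paper offers no independent argument for this lemma---it simply cites \cite[Lemma~2.1]{S}---and your proposal ultimately rests on the same citation, so your approach matches the paper's. One small caveat about your sketch of the descent: since the slices $f(t)$ are simple, the local families $\cE_i$ differ on overlaps by line bundles pulled back from $U_{ij}\subset\P^1$ (which are already trivial on affine opens), and the genuine obstruction to gluing the resulting isomorphisms is a $2$-cocycle with values in $\cO^{*}$, i.e.\ a class in $H^{2}(\P^1,\cO^{*})$, which vanishes because $\P^1$ is a curve over an algebraically closed field---it is not a $1$-cocycle in ${\rm Pic}(\P^1)\cong\Z$ as you write, though since you defer the gluing to Sun's proof this does not affect the validity of your argument.
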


This allows us to identify non-constant maps $f:{\mathbb P}^1\to M$ with vector bundles on $C\times {\mathbb P}^1$ 
that restrict to a semistable vector bundle of rank $r$ and determinant $L$ on every fiber of the projection to ${\mathbb P}^1$, even when $h > 1.$

  \begin{lem} 
  	\label{unobs} 
	The point of ${\rm Mor}_{k}(\P^{1},M)$ corresponding to a morphism $f : \P^{1} \to M$ whose image lies in the smooth locus of $M$ is unobstructed if and only if the restriction of the associated vector bundle $\cE$ on $C\times {\mathbb P}^1$
  	 to the generic fiber $\{ P \}\times {\mathbb P}^1$ is isomorphic to  
  ${\mathcal E}_{|\{ P \}\times {\mathbb P}^1}={\mathcal O}_{{\mathbb P}^1}(\alpha)^{r_1}\oplus {\mathcal O}_{{\mathbb P}^1}(\alpha -1)^{r-r_1}$ for some $\alpha \in \Z$ and $r_1 \in [1,r].$
 \end{lem}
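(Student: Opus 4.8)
The plan is to compute $H^1(\P^1, f^\ast T_M)$ directly in terms of the splitting type of $\cE|_{\{P\}\times\P^1}$ on a generic fiber, and to show this group vanishes exactly for the ``balanced'' splitting types described in the statement. First I would use Lemma \ref{sun-lem} to replace $f$ by the vector bundle $\cE$ on $C\times\P^1$ with $\cHom_0(\cE) = (1_C\times f)^\ast\cA_0$, and recall from the preliminaries that $T_M \cong R^1 p_{M\ast}\cA_0$ and that $\cHom_0(E)$ sits in the exact sequence $0 \to \cO_C \to E^\ast\otimes E \to \cHom_0(E)\to 0$ coming from the trace decomposition $E^\ast\otimes E \cong \cO_C\oplus\cHom_0(E)$ (this splitting is where the hypothesis that the image lies in the smooth locus, so that $h$ can be effectively ignored fiberwise, is used — on the smooth locus one genuinely has the Poincaré-type identification locally, or one argues via $\cEnd$ of the universal bundle). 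The upshot is that, letting $\pi:C\times\P^1\to\P^1$ be the projection, $f^\ast T_M \cong R^1\pi_\ast\, \cHom_0(\cE)$, and since $H^1$ of a curve has no higher pushforwards, $H^1(\P^1, f^\ast T_M) = H^1(C\times\P^1, \cHom_0(\cE))$ — assuming the relevant base-change/Leray spectral sequence degenerates, which it does because $R^i\pi_\ast\cHom_0(\cE)$ vanishes for $i\neq 1$ along with a dimension count showing $R^1\pi_\ast$ is locally free.

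Next I would analyze $\cHom_0(\cE)$ restricted to a generic fiber $\{P\}\times\P^1$. Write $\cE|_{\{P\}\times\P^1} \cong \bigoplus_i \cO_{\P^1}(a_i)$ for the generic splitting type $(a_1,\dots,a_r)$; then $(\cE^\ast\otimes\cE)|_{\{P\}\times\P^1} \cong \bigoplus_{i,j}\cO_{\P^1}(a_j - a_i)$, and $\cHom_0(\cE)$ restricted there is the complement of one trivial summand, so it is $\bigoplus_{i\neq j}\cO_{\P^1}(a_j-a_i) \oplus \cO_{\P^1}^{\,r-1}$. The key cohomological point: $H^1(C\times\P^1, \cHom_0(\cE))$ vanishes if and only if $H^1(\P^1, \cHom_0(\cE)|_{\{P\}\times\P^1})$ vanishes for generic $P$ — this reduction needs a semicontinuity/generic-flatness argument plus the fact that the only obstruction to restriction being surjective on $H^0$ comes from the $H^1$ along fibers, combined with $H^1(C,\text{(vector bundle)})$ contributions that I would need to control (here the genus-$g$ factor $C$ enters; one uses that the family over $\P^1$ and the structure of $\Theta$ force the fiberwise condition to be the governing one, much as in Sun's argument). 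Then $H^1(\P^1, \bigoplus_{i\neq j}\cO_{\P^1}(a_j-a_i)) = 0$ if and only if $a_j - a_i \geq -1$ for all $i,j$, i.e. if and only if $\max_i a_i - \min_j a_j \leq 1$, which is exactly the condition that the splitting type be $\cO_{\P^1}(\alpha)^{r_1}\oplus\cO_{\P^1}(\alpha-1)^{r-r_1}$.

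For the converse direction — that any balanced splitting type actually is unobstructed — I would run the same computation: when $a_j - a_i \in\{-1,0,1\}$ for all pairs, every summand of $\cHom_0(\cE)|_{\{P\}\times\P^1}$ is $\cO_{\P^1}(1)$, $\cO_{\P^1}$, or $\cO_{\P^1}(-1)$, all of which have vanishing $H^1$, hence $H^1$ of the generic fiber vanishes, hence (by the same spectral-sequence/base-change input) $H^1(\P^1, f^\ast T_M) = 0$, giving unobstructedness. I would also note that $r_1$ ranges over $[1,r]$, with $r_1 = r$ corresponding to the uniform splitting $\cO_{\P^1}(\alpha)^r$ (which, combined with the fixed-determinant constraint $\det\cE|_{\{P\}\times\P^1} = $ const, actually pins down $\alpha$ and the possible values in relation to $k$ — but that refinement belongs to the later theorems, not here).

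The main obstacle I expect is the reduction from $H^1(C\times\P^1, \cHom_0(\cE))$ to the fiberwise $H^1$ over the \emph{generic} point of $\P^1$: one must rule out that $H^1$ is forced to be nonzero by the $C$-direction (the $H^1(C,-)$ contributions in the Künneth/Leray decomposition) even when the generic fiber is balanced, and conversely that a jump in splitting type over special points of $\P^1$ doesn't spoil vanishing. This is precisely the kind of semicontinuity bookkeeping that Sun's Lemma 2.1 and the surrounding analysis in \cite{S} are designed to handle, so I would lean on that machinery; the honest content is checking that the ``expected dimension'' computed from $\chi$ is achieved exactly when the generic-fiber obstruction group vanishes, which ties the two statements together.
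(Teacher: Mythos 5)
Your overall strategy coincides with the paper's: pass to the bundle $\cE$ on $C\times\P^1$ via Lemma \ref{sun-lem}, restrict ${\cH}om_{0}(\cE)$ to a generic fiber $\{P\}\times\P^1$, decompose it as $\cO_{\P^1}^{r-1}\oplus\bigoplus_{i\neq j}\cO_{\P^1}(a_j-a_i)$, and read off that $H^1$ vanishes exactly for balanced splitting types. Two issues, one fixable and one substantive. First, your Leray bookkeeping is off by one. Since every $\cE|_{C\times\{t\}}$ is stable (this is where the smooth-locus hypothesis enters), $\pi_{\ast}{\cH}om_{0}(\cE)=0$, and $R^{q}\pi_{\ast}=0$ for $q\geq 2$; hence $H^{i}(C\times\P^1,{\cH}om_{0}(\cE))\cong H^{i-1}(\P^1,R^{1}\pi_{\ast}{\cH}om_{0}(\cE))=H^{i-1}(\P^1,f^{\ast}T_M)$. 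So $H^{1}(C\times\P^1,{\cH}om_{0}(\cE))$ is the \emph{tangent} space $H^{0}(\P^1,f^{\ast}T_M)$, while the obstruction space is $H^{1}(\P^1,f^{\ast}T_M)\cong H^{2}(C\times\P^1,{\cH}om_{0}(\cE))$. With the correct index, the sufficiency direction closes cleanly and without the semicontinuity worries you raise: Leray for the other projection $p:C\times\P^1\to C$ gives $H^{2}(C\times\P^1,{\cH}om_{0}(\cE))\cong H^{1}(C,R^{1}p_{\ast}{\cH}om_{0}(\cE))$, and when the generic fiber is balanced, $R^{1}p_{\ast}{\cH}om_{0}(\cE)$ is a torsion sheaf on the curve $C$, so its $H^{1}$ vanishes.

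The substantive gap is the necessity direction, which you explicitly leave to ``Sun's machinery'': if the generic splitting type has a gap $\geq 2$, you must produce a nonzero obstruction class. Knowing that $R^{1}p_{\ast}{\cH}om_{0}(\cE)$ has positive generic rank does \emph{not} by itself force $H^{1}(C,R^{1}p_{\ast}{\cH}om_{0}(\cE))\neq 0$ — a positive-rank sheaf on $C$ of sufficiently large degree has vanishing $H^1$ — so the reduction ``global obstruction vanishes iff generic-fiber $H^1$ vanishes'' genuinely needs an argument in this direction. The paper is itself terse here (it asserts the isomorphism $H^{1}(\P^1,f^{\ast}T_M)\cong H^{1}(\P^1,{\cH}om_{0}(\cE)|_{\{P\}\times\P^1})$ without elaboration), but it does commit to that identification, whereas your write-up names the difficulty and then defers it. To close it you would need something concrete, e.g.\ to locate inside $f^{\ast}T_M=R^{1}q_{\ast}{\cH}om_{0}(\cE)$ a summand of degree $\leq -2$ on $\P^1$ coming from ${\cH}om(\cF_l,\cE_1)$ for the extreme pieces of the relative Harder--Narasimhan filtration (these compositions are nilpotent, hence traceless), or an equivalent duality argument controlling the degree of the locally free part of $R^{1}p_{\ast}{\cH}om_{0}(\cE)$. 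As it stands, the ``only if'' half of the lemma is not proved.
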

 
 \begin{proof}
 For every point $P\in C$ the restriction ${\mathcal E}_{|\{ P \}\times {\mathbb P}^1}$ is a vector bundle on ${\mathbb P}^1$,
and therefore a direct sum of line bundles. 
Let us write the fiber over the generic $P$ as 
\begin{equation} 
	\label{decP1}
	{\mathcal E}_{|\{ P \}\times {\mathbb P}^1}={\mathcal O}_{{\mathbb P}^1}(\alpha_1)^{r_1}\oplus \cdots \oplus {\mathcal O}_{{\mathbb P}^1}(\alpha_l)^{r_l}, \hskip20pt \alpha _1>\dots >\alpha_l 
\end{equation}
Recall that  the point corresponding to $f$ is unobstructed if and only if $H^{1}(\P^{1},f^{\ast}T_{M})=0$.  By Lemma \ref{sun-lem}, we have that
$$H^{1}( {\mathbb P}^1, f^{\ast}T_{M}) \cong H^{1}(\P^{1},{\cH}om_{0}(\cE)|_{\{P\} \times \P^{1}})$$
 $$\cong H^{1} ({\mathbb P}^1, {\mathcal O}_{{\mathbb P}^1}^{r-1} \oplus \displaystyle\bigoplus_{i \neq j}{\mathcal O}_{{\mathbb P}^1}(\alpha_i-\alpha_j)^{r_ir_j} )
  \cong \displaystyle\bigoplus_{i \neq j}H^{1}({\mathbb P}^1,{\mathcal O}_{{\mathbb P}^1}(\alpha_i-\alpha_j))^{r_ir_j}$$
 Since this vanishes if and only if $|\alpha_i-\alpha_j|\le1$ for all $i$ and $j,$ the result follows.
 \end{proof}
 
 It follows that up to tensoring with the pull back of a line bundle on ${\mathbb P}^1$, we can assume that on an unobstructed component 
  the restriction to the generic fiber is either trivial or ${\mathcal O}_{{\mathbb P}^1}(1)^{r_1}\oplus {\mathcal O}_{{\mathbb P}^1}^{r-r_1}$ for some $r_1 > 0.$  
  
  Corresponding to the decomposition of the generic fiber in equation (\ref{decP1}), we have a relative Harder-Narasimhan filtration for ${\mathcal E}$ with respect to 
  $p : C \times \P^{1} \to C$: 
  \[0={\mathcal E}_0\subset {\mathcal E}_1\subset\dots\subset {\mathcal E}_l={\mathcal E}\]
The successive quotients 
\begin{equation}\label{eqquot} {\mathcal F}_i={\mathcal E}_i/{\mathcal E}_{i-1}\end{equation} 
are each torsion-free with generic splitting type ${\mathcal O}_{{\mathbb P}^1}(\alpha_i)^{r_i}$.
Therefore, ${\mathcal F}'_i :={\mathcal F}_i\otimes p_2^*{\mathcal O}_{{\mathbb P}^1}(-\alpha_i)$ has generically trivial splitting type for each $i.$

A vector bundle on $C\times {\mathbb P}^1$ whose restriction to a general fiber of the projection to $\P^{1}$ is semistable of rank $r$ and degree $d$ gives rise to a rational curve $f : \P^{1} \to M$.
From (2.1), (2.2) in \cite{S},   the degree of the pull back of $\cE$ with respect to the anticanonical bundle can be computed as the discriminant of $\cE$:
$$ \deg(f) = \Delta ( {\mathcal E})=2rc_2( {\mathcal E})-(r-1)c_1( {\mathcal E})^2
  	=2r\sum_{i=1}^nc_2( {\mathcal F}_i')+2\sum_{i=1}^{n-1}(\rank ( {\mathcal E}_i)d-\deg ( {\mathcal E}_i)r)(\alpha _i-\alpha_{i+1})  $$
As $d=h\bar d, r=h\bar r$  with 	$h$ the greatest common divisor of $d,r$,  the degree $k$ of the rational curve as defined in \ref{defdeg} is  
\begin{equation}
	\label{degree}  
	k=\bar r\sum_{i=1}^nc_2( {\mathcal F}_i')+\sum_{i=1}^{n-1}(\rank ( {\mathcal E}_i)\bar d-\deg ( {\mathcal E}_i)\bar r)(\alpha _i-\alpha_{i+1})  
\end{equation}

 \medskip

\begin{lem} 
 	\label{pbfC}
If a torsion free sheaf ${\mathcal E} $ on a ruled surface has generic trivial splitting type, then $c_2( {\mathcal E})\ge 0$
with equality if and only if ${\mathcal E}$ is the pullback of a locally free sheaf on $C$, i.e.~ $\mathcal{E}$ has trivial splitting type on {\bf each} fiber.
\end{lem}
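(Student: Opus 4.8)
The strategy is to pull back the pushforward of $\mathcal{E}$ along the ruling and measure the difference. Write $p : S \to C$ for the ruling and $f$ for the class of a fibre, so that $f^2 = 0$ and $f\cdot c_1(T_S) = -f\cdot K_S = 2$ by adjunction on a fibre. Since $\mathcal{E}$ has generically trivial splitting type, $V := p_*\mathcal{E}$ is a torsion-free, hence locally free, sheaf of rank $r$ on the smooth curve $C$, and the evaluation map $\phi : p^*V \to \mathcal{E}$ is an isomorphism over the generic point of $C$. As $p^*V$ is locally free, $\ker\phi$ is torsion-free of rank $0$, hence zero, so $\phi$ is injective; put $Q := \coker\phi$. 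Then $Q$ is torsion with support contained in finitely many fibres, so $c_1(Q) = mf$ for some integer $m \ge 0$. From the Whitney formula $c(\mathcal{E}) = c(p^*V)\,c(Q)$ one gets $c_2(\mathcal{E}) = c_2(Q)$, since the cross term $c_1(p^*V)\cdot c_1(Q)$ is a multiple of $f^2 = 0$ and $c_2(p^*V) = 0$; and as $c_1(Q)^2 = 0$ we have $c_2(Q) = -\operatorname{ch}_2(Q)$. So the statement reduces to $\operatorname{ch}_2(Q) \le 0$, with equality only if $Q = 0$.

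The key input is the vanishing $p_*Q = 0$. Applying $p_*$ to $0 \to p^*V \xrightarrow{\phi} \mathcal{E} \to Q \to 0$ and using the projection formula $p_*(p^*V) = V\otimes p_*\mathcal{O}_S = V$, together with the fact that $p_*\phi : p_*p^*p_*\mathcal{E} \to p_*\mathcal{E}$ is an isomorphism (triangle identity for the adjunction $(p^*,p_*)$), one finds that the connecting map identifies $p_*Q$ with a subsheaf of $R^1p_*(p^*V) = V\otimes R^1p_*\mathcal{O}_S$, which vanishes because $p$ is a ruled surface. Hence $p_*Q = 0$, and consequently $Q$ has no $0$-dimensional torsion: any such subsheaf $T \subseteq Q$ satisfies $p_*T \hookrightarrow p_*Q = 0$, and $p_*T = 0$ forces $T = 0$ since $p$ is finite on the finite set $\supp T$. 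Therefore $Q$ is either zero or purely $1$-dimensional and supported on fibres; in the latter case $c_1(Q) = mf$ with $m \ge 1$.

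Now I compute $\operatorname{ch}_2(Q)$ in two ways. Hirzebruch--Riemann--Roch on $S$ gives $\chi(S,Q) = \operatorname{ch}_2(Q) + \tfrac12 c_1(Q)\cdot c_1(T_S) = \operatorname{ch}_2(Q) + m$, using $c_1(Q) = mf$ and $f\cdot c_1(T_S) = 2$. The Leray spectral sequence for $p$ gives $\chi(S,Q) = \chi(C,p_*Q) - \chi(C,R^1p_*Q) = -\ell(R^1p_*Q) \le 0$, since $p_*Q = 0$ and $R^1p_*Q$ is a torsion sheaf on the curve $C$. Combining, $c_2(\mathcal{E}) = c_2(Q) = -\operatorname{ch}_2(Q) = m + \ell(R^1p_*Q) \ge 0$. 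Equality forces $m = 0$ and $R^1p_*Q = 0$; but a nonzero purely $1$-dimensional sheaf has nonzero $c_1$, so $m = 0$ already gives $Q = 0$, i.e. $\phi : p^*V \iso \mathcal{E}$, so $\mathcal{E}$ is the pullback of the locally free sheaf $V$ on $C$. Conversely, if $\mathcal{E} = p^*W$ with $W$ locally free on $C$, then $V = W$, $Q = 0$, $c_2(\mathcal{E}) = 0$, and $\mathcal{E}$ has trivial splitting type on every fibre.

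Two points require care. First, establishing that $\phi$ is injective with cokernel supported only on fibres: since $\mathcal{E}$ is merely torsion-free, one cannot argue fibrewise but must work at the generic point of $C$, where $\mathcal{E}$ restricts to a genuine vector bundle on $\mathbb{P}^1$. Second, the identity $p_*Q = 0$: this is where the ruled structure ($R^1p_*\mathcal{O}_S = 0$) is essential, and it is precisely what excludes a $0$-dimensional summand of $Q$, which would otherwise contribute negatively to $c_2(\mathcal{E})$. Everything else is routine Chern-class bookkeeping.
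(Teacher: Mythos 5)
Your argument is correct and complete, and it is essentially the standard proof of this statement; the paper itself gives no proof, only the pointer to \cite{GL} Lemma 1.4, \cite{S} Lemma 2.2 and \cite{H}, so your write-up is strictly more informative than what appears in the text. The route you take --- comparing $\cE$ with $p^{*}p_{*}\cE$ via the evaluation map, showing $\ker\phi=0$ and reducing everything to the cokernel $Q$ --- is the Gieseker--Li argument, and you handle the two genuinely delicate points correctly: (1) the vanishing $p_{*}Q=0$ (via the triangle identity and $R^{1}p_{*}\cO_{S}=0$), which is exactly what rules out a $0$-dimensional piece of $Q$ that would contribute $-\ell$ to $c_{2}(\cE)$ and destroy the inequality; and (2) the sign bookkeeping $c_{2}(\cE)=c_{2}(Q)=-\operatorname{ch}_{2}(Q)=m+\ell(R^{1}p_{*}Q)\ge 0$, with the equality case forced by purity of $Q$. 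The only comparison worth recording is that the surrounding text of the paper (Lemma \ref{mustbeeltr}) implicitly uses the alternative, inductive proof of the same fact: pass to the reflexive hull (which increases $-c_{2}$ by the length of the $0$-dimensional quotient) and then perform elementary transformations along the finitely many jumping fibres, each of which strictly decreases $c_{2}$ while preserving generic triviality; your one-step Leray/Riemann--Roch computation packages that induction into a single identity, and in particular makes the equality criterion immediate rather than the endpoint of a descent.
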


\begin{proof} 
	See the proof of  \cite{GL} Lemma 1.4 (or \cite{S}, Lemma 2.2)  and \cite{H}.
\end{proof}

  Recall that given a vector bundle $E$ on $C$, a point $P\in C$, and a surjective morphism $\phi : E \to \cO_{P},$ the associated elementary transformation $E'$ is the kernel of $\phi.$ 
   In particular $E'$ fits into an exact sequence
  \begin{equation}
  	0 \to E' \to E \to \cO_{P} \to 0
  \end{equation}

\begin{lem}\label{mustbeeltr}
Denote the successive quotients from equation (\ref{eqquot}) by $\cF_{i}$.
 	There exist a finite number of elementary transformations
 	$$0\to  {\mathcal F}_{i1}\to  {\mathcal F}_i\to  {\mathcal O}_{\{ P_{i1}\}\times {\mathbb P}^1}(\beta_1)^{r'_l}\to 0,\
 	0\to  {\mathcal F}_{i2}\to  {\mathcal F}_{i1}\to  {\mathcal O}_{\{ P_{i2}\}\times {\mathbb P}^1}(\beta_2)^{r'_2} \to 0,\dots $$
	$$ \dots , 0\to  {\mathcal F}_{ik_i}\to  {\mathcal F}_{ik_i-1}\to  {\mathcal O}_{\{ P_{ik_i}\}\times {\mathbb P}^1}(\beta_i)^{r'_{k_i}}\to 0$$
 	with $  {\mathcal F}_{ik_i}$ the pull-back of a vector bundle on $C$.
\end{lem}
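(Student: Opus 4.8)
The plan is to argue by induction on the number of jumping fibers of $\cF_i$, peeling off one elementary transformation at a time. Recall from equation (\ref{eqquot}) and the discussion after Lemma \ref{unobs} that each $\cF_i$ is torsion-free with generic splitting type $\cO_{\P^1}(\alpha_i)^{r_i}$, so $\cF_i' = \cF_i \otimes p_2^*\cO_{\P^1}(-\alpha_i)$ is torsion-free with generically trivial splitting type. By Lemma \ref{pbfC}, $c_2(\cF_i') \ge 0$, with equality precisely when $\cF_i'$ (hence $\cF_i$) is a pullback from $C$. If $c_2(\cF_i') = 0$ we are done with $k_i = 0$, so assume $c_2(\cF_i') > 0$; then there is at least one fiber $\{P\}\times\P^1$ over which the splitting type of $\cF_i$ jumps away from the generic type (strictly speaking, where $\cF_i$ fails to be locally free or fails to have the generic splitting type).

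First I would fix such a jumping point $P = P_{i1} \in C$ and examine the restriction $\cF_i|_{\{P\}\times\P^1}$, which is a coherent sheaf on $\P^1$; write its locally free part as $\bigoplus_j \cO_{\P^1}(\gamma_j)$ with at least one $\gamma_j > \alpha_i$ (the "rising" summands are forced by semicontinuity and the fact that the generic type is $\cO(\alpha_i)^{r_i}$, possibly together with a torsion contribution). Taking $\beta_1 := \max_j \gamma_j$ and the quotient $\cF_i \to \cO_{\{P\}\times\P^1}(\beta_1)^{r_1'}$ onto the top graded piece of the splitting over $P$ (where $r_1'$ is the multiplicity of $\beta_1$), I get an elementary transformation
\[
0 \to \cF_{i1} \to \cF_i \to \cO_{\{P_{i1}\}\times\P^1}(\beta_1)^{r_1'} \to 0.
\]
The key computation is that this strictly decreases $c_2$: tensoring by $p_2^*\cO_{\P^1}(-\alpha_i)$ and using the standard formula for $c_2$ of an elementary transformation along a fiber (the Chern class of $\cO_{\{P\}\times\P^1}(\beta)^{r'}$ contributes $-r'(\beta - \alpha_i)$ to $c_2$ of the subsheaf, where $\beta - \alpha_i \ge 1$), one finds $c_2(\cF_{i1}') = c_2(\cF_i') - r_1'(\beta_1 - \alpha_i) \le c_2(\cF_i') - 1 < c_2(\cF_i')$. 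Moreover $\cF_{i1}$ still has generic splitting type $\cO_{\P^1}(\alpha_i)^{r_i}$ (the modification happens only over $P$), so it again satisfies the hypotheses of Lemma \ref{pbfC} with the same normalization. Since $c_2$ is a nonnegative integer that strictly drops at each step, after finitely many steps $k_i$ we reach $\cF_{ik_i}$ with $c_2(\cF_{ik_i}') = 0$, which by Lemma \ref{pbfC} is the pullback of a vector bundle on $C$. This gives the asserted finite chain of elementary transformations.

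The main obstacle is the bookkeeping at a jumping fiber: one must be careful that $\cF_i|_{\{P\}\times\P^1}$ may have both a torsion part (if $\cF_i$ is not locally free at $P$) and a locally free part whose splitting type is not the generic one, and one needs to check that in either case there is a natural surjection onto some $\cO_{\{P\}\times\P^1}(\beta)^{r'}$ with $\beta \ge \alpha_i$ whose kernel has strictly smaller $c_2$ after renormalization. The cleanest way to handle this uniformly is to first pass to the double dual / reflexive hull to separate the torsion issue from the splitting-type issue, or alternatively to invoke directly the structure theory of torsion-free sheaves on ruled surfaces as in \cite{GL} and \cite{H} cited in Lemma \ref{pbfC}; I would use whichever of these makes the $c_2$-drop transparent, and otherwise the induction is routine.
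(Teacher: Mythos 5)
Your overall strategy is the same as the paper's (descending induction on $c_2$ of the normalized sheaf, peeling off one elementary transformation per step and terminating via Lemma \ref{pbfC}), but the crucial step has the wrong sign, and as a result you project onto the wrong summand. For an elementary modification $0 \to \cF' \to \cF \to \cO_{\{P\}\times\P^1}(\beta)^{r'} \to 0$ on $C\times\P^1$ one has
\begin{equation*}
c_2(\cF') \;=\; c_2(\cF) \;-\; r'\,\bigl(c_1(\cF)\cdot f\bigr) \;+\; r'\beta ,
\end{equation*}
where $f$ denotes the fiber class (this follows from $\mathrm{ch}_2(\cO_f(\beta))=\beta$ and $f^2=0$). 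After normalizing so that $\cF_i'$ has degree $0$ on every fiber, this reads $c_2(\cF_{i1})=c_2(\cF_i')+r'\beta$: the second Chern class \emph{increases} by $r'\beta$ when $\beta>0$. So your choice $\beta_1=\max_j\gamma_j$, which is strictly positive after normalization, makes $c_2$ go \emph{up} at each step, and your induction never terminates; the claim that the contribution is $-r'(\beta_1-\alpha_i)$ is the sign error. The correct move, and the one the paper makes, is to surject onto the \emph{lowest} summand $\cO_{\{P\}\times\P^1}(\gamma_{\min})^{r'}$: since the fiber degree of the normalized sheaf is $0$ and the splitting type at $P$ is nontrivial, one automatically has $\gamma_{\min}<0$, whence $c_2(\cF_{i1})=c_2(\cF_i')+r'\gamma_{\min}<c_2(\cF_i')$, and the descent terminates by nonnegativity of $c_2$ (Lemma \ref{pbfC}).

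Your remark about torsion in the restriction $\cF_i|_{\{P\}\times\P^1}$ is a legitimate subtlety (the paper's proof also writes the restriction as a direct sum of line bundles without comment), but it is secondary: a torsion-free sheaf on the surface fails to be locally free only at finitely many points, and one can pass to the reflexive hull or argue as in \cite{GL} to reduce to the locally free case. The essential defect of your argument is the choice of quotient and the resulting failure of the $c_2$-descent, not the bookkeeping of torsion.
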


\begin{proof}   
	Recall that $ {\mathcal F}_i'$ has generic trivial splitting type.  So, from Lemma \ref{pbfC}, $c_2(  {\mathcal F}'_i)\ge 0$
	 with equality if and only if $ {\mathcal F}'_i$ is the pull back of a locally free sheaf on $C$. 

	If  $c_2( {\mathcal F}'_i)> 0$, then the vector bundle is not the pull-back of a vector bundle on $C$.
	Hence, from Lemma   \ref{pbfC},  the fiber over a certain  $P\in C$ has nontrivial splitting type.
  	$$( {\mathcal F}'_i)_{|\{ P \}\times {\mathbb P}^1}={\mathcal O}_{{\mathbb P}^1}(\gamma_1)^{r_1}\oplus {\mathcal O}_{{\mathbb P}^1}(\gamma_2)^{r_2}\oplus\dots
  	\oplus {\mathcal O}_{{\mathbb P}^1}(\gamma_p)^{r_p}, \ \gamma_1<\dots<\gamma_p .$$
  	Consider the natural map $ {\mathcal F}'_i\to  {\mathcal O}_{\{ P\}\times {\mathbb P}^1}(\gamma_1)^{r_1}$ and the corresponding exact sequence
  	$$0\to  {\mathcal F}_{i1}\to  {\mathcal F}'_i\to  {\mathcal O}_{\{ P\}\times {\mathbb P}^1}(\gamma_1)^{r_1}\to 0$$
  		As $ {\mathcal F}'_i$ has trivial splitting type and the degree on each fiber is the same, $\sum_{j=1}^p\gamma_jr_j=0$. 
  	Then, the assumption $\ \gamma_1<\dots<\gamma_p $ implies $\gamma_1 <0$. 
 	From the exact sequence defining $ {\mathcal F}_{i1}$,  
	$$c_2( {\mathcal F}_{i1})=c_2( {\mathcal F}'_i)+r_1\gamma_1< c_2( {\mathcal F}'_i).$$
	Take $\beta_1$ in the statement of the lemma to be $\gamma_1$.

As the cokernel of the injective map $ {\mathcal F}_{i1}\to  {\mathcal F}'_i$ is concentrated on a fiber, the generic splitting type of $ {\mathcal F}_{i1}$ is still trivial. 
If  $c_2( {\mathcal F}_{i1})> 0$, we repeat the process.
We obtain a sequence of bundles ${\mathcal F}_{i1}, {\mathcal F}_{i2}, \dots $ with exact sequences as in the statement of the lemma and
$$\dots < c_2( {\mathcal F}_{i3})<c_2( {\mathcal F}_{i2})<c_2( {\mathcal F}_{i1})< c_2( {\mathcal F}'_i).$$
	From Lemma \ref{pbfC}, $c_2( {\mathcal F}_{ik})\ge 0$ for all $k$ with equality only if $ {\mathcal F}_{ik}$ is the pull back of a sheaf on $C$.
	As $ c_2( {\mathcal F}'_i)$ is finite, the process needs to stop.
	The process can be continued so long as  $c_2( {\mathcal F}_{ik})>0$. 
	Hence, there exists a $k_l$ such that $c_2( {\mathcal F}_{ik_l})=0$ and then $ {\mathcal F}_{ik_l}$ is the pull back of a sheaf on $C$.
	Hence, ${\mathcal F}'_i$ can be obtained by doing elementary transformations from the pull back of a bundle on $C$.
\end{proof}

\begin{cor}
Given a family of rational curves in $M$, one can find families of vector bundles and divisors  $C$ so that 
the rational curves live in spaces of successive extensions and elementary transformations.
\end{cor}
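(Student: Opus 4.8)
The plan is to run the constructions underlying Lemmas~\ref{sun-lem} and \ref{mustbeeltr} in a family over the base of the given family of rational curves, taking care that the discrete invariants that appear along the way (generic splitting types, relative Harder--Narasimhan data, support points and ranks of the elementary transformations) become locally constant after a suitable stratification of the base.

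First I would fix an irreducible base $B$ carrying the family (for instance an irreducible component of ${\rm Mor}_{k}(\P^1,M)$). The relative form of Lemma~\ref{sun-lem} yields a vector bundle $\cE$ on $C\times\P^1\times B$ whose restriction to each slice $C\times\P^1\times\{b\}$ is the bundle associated to the rational curve $f_{b}$, with ${\cH}om_{0}(\cE)$ the pullback of $\cA_{0}$. Passing to a dense open subset of $B$, I may assume that the generic splitting type ${\mathcal O}_{\P^1}(\alpha_1)^{r_1}\oplus\cdots\oplus{\mathcal O}_{\P^1}(\alpha_l)^{r_l}$ of $\cE$ on the $\P^1$-fibers is independent of $b$, so that the relative Harder--Narasimhan filtration with respect to $p\times{\rm id}_{B}:C\times\P^1\times B\to C\times B$
\[
0=\cE_{0}\subset\cE_{1}\subset\cdots\subset\cE_{l}=\cE
\]
exists over all of $B$ and restricts fiberwise to the filtration of \eqref{decP1}; the quotients $\cF_{i}=\cE_{i}/\cE_{i-1}$ are flat over $B$ and fiberwise torsion-free of constant generic splitting type ${\mathcal O}_{\P^1}(\alpha_i)^{r_i}$, and $\cF_{i}'=\cF_{i}\otimes p_{2}^{*}{\mathcal O}_{\P^1}(-\alpha_i)$ is fiberwise of generically trivial splitting type.

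Next I would apply Lemma~\ref{mustbeeltr} slicewise to each $\cF_{i}'$. The length $k_{i}$ of the chain of elementary transformations is controlled by the $c_{2}$ of the successive sheaves, and the points $P_{ij}\in C$ together with the ranks $r'_{ij}$ are read off from the locus where the splitting type jumps; all of these are constructible data on $B$, so after shrinking $B$ further, and if necessary replacing it by a finite \'etale cover in order to split the monodromy permuting the $P_{ij}$ and to realize the relevant quotient maps as morphisms, I may assume the $k_{i}$, $r'_{ij}$, $\alpha_i$ and the sections $P_{ij}:B\to C\times B$ are all constant in the family. The fiberwise elementary transformations then glue to relative short exact sequences
\[
0\to\cF_{i1}\to\cF_{i}'\to{\mathcal O}_{\Gamma_{i1}\times\P^1}(\beta_{i1})^{r'_{i1}}\to 0,\ \ \dots,\ \ 0\to\cG_{i}\to\cF_{i,k_i-1}\to{\mathcal O}_{\Gamma_{ik_i}\times\P^1}(\beta_{ik_i})^{r'_{ik_i}}\to 0,
\]
where $\Gamma_{ij}\subset C\times B$ is the graph of $P_{ij}$ and $\cG_{i}=\cF_{i,k_i}$ is (fiberwise, hence after possibly shrinking $B$, globally) the pullback of a vector bundle on $C\times B$ flat over $B$. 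Reassembling, $\cE$ is recovered from the $\cF_{i}$ by the extensions $0\to\cE_{i-1}\to\cE_{i}\to\cF_{i}\to0$, while each $\cF_{i}$ is a twist of $\cF_{i}'$ and $\cF_{i}'$ is built from $\cG_{i}$ by undoing the displayed elementary transformations. Thus the generic member $f_{b}$ of the family corresponds to a bundle obtained, starting from pullbacks of the vector bundles $\cG_{i}$ on $C$, by a fixed pattern of elementary transformations supported on the divisors $P_{ij}$ and of extensions; letting $b$ vary produces the asserted families of vector bundles on $C$ and of divisors on $C$, and since $B$ was an arbitrary irreducible component this proves the corollary.

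I expect the main obstacle to be precisely this bookkeeping: arranging that the per-fiber output of Lemmas~\ref{sun-lem} and \ref{mustbeeltr} is simultaneous over a dense open subset of the base. Concretely, one must check that the Harder--Narasimhan type, the numbers $c_{2}(\cF_{ij}')$, the chain lengths $k_{i}$, and the support points $P_{ij}$ with their ranks are constructible and hence constant on a stratum (a standard semicontinuity argument, but one that has to be set up carefully for a family of torsion-free rather than locally free sheaves on the surface), and one must pass to a finite cover of $B$ both to trivialize the monodromy on the points $P_{ij}$ and to upgrade the pointwise elementary-transformation quotients to genuine morphisms over the cover.
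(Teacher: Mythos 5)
Your proposal is correct and follows the same route the paper intends: the corollary is stated without proof precisely because it is the combination of Lemma~\ref{sun-lem}, the relative Harder--Narasimhan filtration with its quotients \eqref{eqquot}, and Lemma~\ref{mustbeeltr}, which is exactly what you assemble. The stratification, constructibility, and \'etale-cover bookkeeping you add to make this work uniformly over the base is the standard way to upgrade the fiberwise statements to families and is consistent with how the paper uses the corollary later.
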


\section{Unobstructed components}\label{secuncomp}

\begin{defn}
If $E$ is a vector bundle of rank $r$ and degree $d$ on $C,$ then for each positive $r'<r$  the \textit{$r'$ Segre invariant} of $E$ is defined as 

\begin{equation*}
	s_{r'}(E) := \min\Biggl\{\begin{vmatrix} r' & r \\ d' & d \end{vmatrix} : {\exists}~ \textnormal{subbundle }E' \subset E\textnormal{ of rank }r'\textnormal{ and degree }d'\Biggr\}
\end{equation*}
\end{defn}

Note that $E$ is stable if and only if $s_{r'}(E) > 0$ for all positive $r' < r.$  If $E$ is a generic stable vector bundle of rank $r$ and degree $d$, we have from Satz 2.2 of \cite{L1} and Th\'{e}or\`{e}me 4.4 of \cite{Hi} that
\begin{equation*}
	r'(r-r')(g-1)\le s_{r'}(E)<r'(r-r')(g-1)+r,  \hskip20pt \ s_{r'}(E) \equiv r'd \ {\rm mod} \ r
\end{equation*}
One has the following results {\cite{RT}}:

\begin{prop} 
\label{Lange} 
When $0 < s \le r'(r-r')(g-1)$ the \textit{Segre locus}
\begin{equation*}
	{\rm S}_{(r',s)}(r,d) := \{ E \in U(r,d) : s_{r'}(E) = s\}
\end{equation*}
is nonempty of codimension $r'(r-r')(g-1)-s$ in $U(r,d).$  Moreover, the generic element $E$ of ${\rm S}_{(r',s)}(r,d)$ is an extension of the form
\begin{equation*}
	0 \to E' \to E \to {E}'' \to 0
\end{equation*}
where $E', {E}''$ are generic elements of $U(r',d')$ and $U(r-r',d-d'),$ respectively (here $d'=\frac{dr'-s}{r}$).  For all such $(r',s)$ we have the inclusion
\begin{equation}
	\label{inclLanloc}
	{\rm S}_{(r',s)}(r,d) \subset \overline{{\rm S}_{(r',s+r)}(r,d)}, \hskip20pt s<r'(r-r')(g-1); 
\end{equation}
\begin{equation*}	
	{\rm S}_{(r',s)}(r,d)=U(r,d), \hskip20pt \ s\ge r'(r-r')(g-1)
\end{equation*}
\end{prop}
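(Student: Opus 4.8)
The plan is to realize the closure of $S_{(r',s)}(r,d)$ as the image of a family of extensions and to compute its dimension by Riemann--Roch. Put $d'=(dr'-s)/r$ and let $\cP$ be the parameter space of triples $(E',E'',[e])$ with $E'\in U(r',d')$ and $E''\in U(r-r',d-d')$ generic stable and $[e]\in\P\,\Ext^1(E'',E')$; there is a rational classifying map $\cP\dashrightarrow U(r,d)$ sending such a triple to the middle term $E_e$ of the associated extension $0\to E'\to E_e\to E''\to 0$. Since $d'+(d-d')=d$ and, because $s>0$, the slopes satisfy $\mu(E')=\tfrac dr-\tfrac s{rr'}<\tfrac dr<\tfrac dr+\tfrac s{r(r-r')}=\mu(E'')$, a nonzero map $E''\to E'$ between these semistable bundles would violate slope inequalities, so $\Hom(E'',E')=0$ and hence
\begin{equation*}
\dim\Ext^1(E'',E')=-\chi\bigl((E'')^{*}\otimes E'\bigr)=r'(r-r')(g-1)+s.
\end{equation*}
Therefore $\cP$ is irreducible of dimension $\bigl[(r')^2+(r-r')^2+r'(r-r')\bigr](g-1)+s+1=\dim U(r,d)-\bigl(r'(r-r')(g-1)-s\bigr)$, which is exactly $\dim U(r,d)$ minus the claimed codimension.

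The crux — and the step I expect to be the main obstacle — is to show that for generic $E',E''$ and generic $[e]$ the bundle $E_e$ is stable with Segre invariant \emph{equal to} $s$. The inequality $s_{r'}(E_e)\le s$ is automatic from the subbundle $E'\subset E_e$ of rank $r'$ and degree $d'$, but stability of $E_e$ together with the non-existence of a subbundle of rank $r'$ and larger degree is precisely Lange's conjecture, established in \cite{RT}; from the same source one obtains that for generic $E$ in the resulting locus the subbundle realizing $s_{r'}(E)$ is unique, so that $\cP\dashrightarrow U(r,d)$ is birational onto its image. This shows $S_{(r',s)}(r,d)\neq\varnothing$, identifies its generic point with an extension of generic stable $E''$, $E'$, and gives $\codim S_{(r',s)}(r,d)\le r'(r-r')(g-1)-s$. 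For the opposite inequality one argues in the usual way: any $E$ with $s_{r'}(E)=s$ carries a rank-$r'$ subbundle $E'$ of degree $d'$ with locally free quotient $E''$, and semistability of $E$ forces $\mu_{\max}(E')\le\tfrac dr\le\mu_{\min}(E'')$, so $E'$ and $E''$ range over bounded families whose semistable members form $U(r',d')\times U(r-r',d-d')$ — over which the extension groups have constant dimension $r'(r-r')(g-1)+s$ — and whose unstable members sweep out strictly smaller families; a standard dimension estimate then bounds $\dim S_{(r',s)}(r,d)$ by $\dim\cP$, yielding $\codim S_{(r',s)}(r,d)\ge r'(r-r')(g-1)-s$. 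With the lower bound this gives equality, and also shows $\overline{S_{(r',s)}(r,d)}$ is irreducible with the stated generic point.

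It remains to treat the inclusions (\ref{inclLanloc}) and the degenerate range $s\ge r'(r-r')(g-1)$. For the latter: the bounds $r'(r-r')(g-1)\le s_{r'}(E)<r'(r-r')(g-1)+r$ for generic $E$ recalled before the proposition, combined with the congruence $s_{r'}(E)\equiv r'd\pmod r$, show that the generic value $s_0$ of $s_{r'}$ is the unique element of that interval congruent to $r'd$ and is attained on a dense open subset of $U(r,d)$, so $\overline{S_{(r',s_0)}(r,d)}=U(r,d)$; since $\{s_{r'}\le s\}$ is closed for every $s$ (properness of the relevant Quot scheme), $s_{r'}$ is lower semicontinuous and no stratum with $s>s_0$ is nonempty, so the codimension formula persists in the degenerate sense $\codim=\max\{0,\,r'(r-r')(g-1)-s\}$. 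For (\ref{inclLanloc}) with $s<r'(r-r')(g-1)$, I would argue by degeneration: taking a generic $E\in S_{(r',s)}(r,d)$ written as an extension $0\to E'\to E\to E''\to 0$ of generic stable bundles with $\deg E'=d'$, one constructs a one-parameter family $\{E_t\}$ with $E_0=E$ in which a positive elementary modification of $E''$ and a negative elementary modification of $E'$ are slid to a common point of $C$, so that for $t\neq 0$ the bundle $E_t$ carries a generic rank-$r'$ subbundle of degree $d'-1=(dr'-(s+r))/r$ and no better one, i.e. $E_t\in S_{(r',s+r)}(r,d)$. Thus a generic point of the irreducible variety $\overline{S_{(r',s)}(r,d)}$ lies in $\overline{S_{(r',s+r)}(r,d)}$, giving the inclusion; the details of this degeneration are carried out in \cite{RT}.
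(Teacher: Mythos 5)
The paper does not prove this proposition at all: it is stated as a result imported verbatim from \cite{RT} (and ultimately from Lange's and Hirschowitz's work), so there is no internal argument to compare against. Your reconstruction is the standard one and is essentially correct — the dimension count $\dim\Ext^1(E'',E')=r'(r-r')(g-1)+s$ and the resulting codimension formula check out, and you rightly identify that the two genuinely hard points (stability of the generic extension with $s_{r'}=s$ exactly, i.e.\ Lange's conjecture, and the nesting of strata (\ref{inclLanloc})) cannot be obtained by soft arguments and must be deferred to \cite{RT}, which is exactly what the paper does by citing that reference.
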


\begin{lem} 
\label{lem:stab}
Let $L$ be a fixed line bundle on a curve $C$.  Given generic stable vector bundles $E_1, E_2$ of ranks $r_1, r_2$ and degrees $d_1, d_2$, respectively, with $\frac {d_1}{r_1}<  \frac {d_2}{r_2}$ and $\det E_1\otimes \det E_2=L$
and a generic extension  $0\to E_1\to E\to E_2\to 0$, then  $E$ is stable.
In fact, the loci of non-stable bundles inside the space of extensions has codimension at least $r_1r_2(g-1)$.
\end{lem}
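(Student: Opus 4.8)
The plan is to assume a member $E$ of the space of extensions is not stable, extract a destabilizing subsheaf, decompose it along the extension, and then run a dimension count stratified by the numerical type of the pieces. Concretely, suppose $0\to E_1\to E\to E_2\to 0$ corresponds to $e\in \Ext^1(E_2,E_1)=H^1(C,E_2^{\vee}\otimes E_1)$ and that $E$ is not stable, and pick a saturated destabilizing subsheaf $G\subseteq E$ (so $\mu(G)\ge\mu(E)$, with strict inequality if one is chasing the non-semistable locus). Put $G_1:=G\cap E_1$; since $E/G$ is torsion free, $G_1$ is a subbundle of $E_1$ and $G_2:=G/G_1$ embeds as a subbundle of $E_2$, because $E_2/G_2$ injects into $E/G$. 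Write $(r_i',d_i'):=(\rank G_i,\deg G_i)$ and $Q_i:=E_i/G_i$. The pairs $(r_1',r_2')=(0,0)$ and $(r_1,r_2)$ are excluded, and since $\mu(E_1)<\mu(E)<\mu(E_2)$ a direct slope computation rules out $r_2'=0$ and $r_1'=r_1$ for a destabilizing $G$; hence $0\le r_1'\le r_1-1$ and $1\le r_2'\le r_2$.

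The key structural observation is that, for fixed $G_1\subseteq E_1$ and $G_2\subseteq E_2$, the extension $e$ admits a compatible sub-extension $0\to G_1\to G\to G_2\to 0$ if and only if $e$ lies in the kernel of the composite
\[ \rho_{G_1,G_2}\colon \Ext^1(E_2,E_1)\longrightarrow \Ext^1(G_2,E_1)\longrightarrow \Ext^1(G_2,Q_1), \]
where the first arrow is restriction along $G_2\hookrightarrow E_2$ and the second is push-out along $E_1\twoheadrightarrow Q_1$. Because $C$ is a smooth curve, $\Ext^2$ of coherent sheaves vanishes, so both arrows, hence $\rho_{G_1,G_2}$, are surjective; therefore $\dim\ker\rho_{G_1,G_2}=\dim\Ext^1(E_2,E_1)-\dim\Ext^1(G_2,Q_1)$, a quantity that Riemann--Roch makes explicit once the numerical type is fixed.

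Now stratify by the type $\tau=(r_1',d_1';r_2',d_2')$. For each $\tau$ compatible with the destabilizing inequality $\tfrac{d_1'+d_2'}{r_1'+r_2'}\ge\mu(E)$, the subbundles of $E_i$ of type $(r_i',d_i')$ form a family of dimension at most the expected value $\chi(G_i^{\vee}\otimes Q_i)$ because $E_i$ is generic; in particular the stratum is empty unless this Euler characteristic is $\ge 0$. This is where the genericity of $E_1,E_2$ is used, through the sharp bounds on Segre invariants and Quot schemes of generic bundles from \cite{L1}, \cite{Hi} and \cite{RT}. Pushing the incidence variety $\{(G_1,G_2,e):\rho_{G_1,G_2}(e)=0\}$ forward to $\Ext^1(E_2,E_1)$ bounds the locus of non-stable $E$ arising from type $\tau$ by
\[ \chi(G_1^{\vee}\otimes Q_1)+\chi(G_2^{\vee}\otimes Q_2)+\dim\Ext^1(E_2,E_1)-\min_{(G_1,G_2)}\dim\Ext^1(G_2,Q_1). \]
A Riemann--Roch computation should then bound this by $\dim\Ext^1(E_2,E_1)-r_1r_2(g-1)$, using the destabilizing inequality, the vanishing $\Hom(E_2,E_1)=0$, and the stability of $E_1,E_2$ together with the slope inequalities (which force the relevant $\Hom$-spaces to vanish, so that $\dim\Ext^1(G_2,Q_1)=-\chi(G_2^{\vee}\otimes Q_1)$ on the strata that matter). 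Only finitely many types $\tau$ are compatible — the ranks $r_i'$ are bounded, the destabilizing inequality bounds $d_1'+d_2'$ from below, and non-emptiness of a stratum bounds each $d_i'$ from above — so the non-stable locus has codimension at least $r_1r_2(g-1)$ in $\Ext^1(E_2,E_1)$; since $g\ge 2$ this is positive, whence a generic extension is stable.

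The main obstacle is exactly this dimension count. Two things have to be pinned down: first, that for a generic $E_i$ the relevant families of subbundles have precisely the expected dimension, not merely at least it — this is the technical heart and is imported from the theory of Segre strata; second, that the Riemann--Roch bookkeeping, fed with the destabilizing inequality and the $\Hom$-vanishings coming from stability, genuinely closes up to the full $r_1r_2(g-1)$. The tightest sub-cases are those in which a destabilizing subbundle has slope exactly $\mu(E)$ (so $E$ is strictly semistable rather than unstable): there the inequality is sharp, and one must use the strict slope bounds on subbundles of the generic $E_i$ with care, or restrict the claimed codimension bound to the non-semistable locus accordingly.
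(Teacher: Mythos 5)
Your route is genuinely different from the paper's, and it is worth saying what the paper actually does: it performs no dimension count at all. Using the Segre stratification of Proposition \ref{Lange} (${\rm S}_{(r',s)}(r,d)\subseteq \overline{{\rm S}_{(r',s+r)}(r,d)}$) it reduces to the smallest values $0<s\le r$, quotes the corresponding statement \emph{without} fixed determinant from p.~493 of \cite{RT}, and transfers it to fixed determinant via the isomorphisms between the fixed-determinant moduli spaces for congruent degrees. What you propose is essentially to re-prove that cited result from scratch by stratifying the non-stable locus of $\P\Ext^1(E_2,E_1)$ by the numerical type of a destabilizing subsheaf. The skeleton is the standard and correct one: in particular the identification of the extensions admitting a compatible sub-extension with $\ker\bigl(\Ext^1(E_2,E_1)\to\Ext^1(G_2,E_1/G_1)\bigr)$, and the surjectivity of that map on a curve, are right.

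But as written there is a genuine gap: the decisive inequality --- that for \emph{every} admissible type $\tau$ the quantity $\chi(G_1^{\vee}\otimes Q_1)+\chi(G_2^{\vee}\otimes Q_2)+\chi(G_2^{\vee}\otimes Q_1)$ is at most $-r_1r_2(g-1)$ --- is only asserted (``a Riemann--Roch computation should then bound this\dots''), and you yourself identify it as the technical heart. Until it is verified for all types satisfying $\tfrac{d_1'+d_2'}{r_1'+r_2'}\ge\mu(E)$, including the boundary case of equality where $E$ is strictly semistable and the count is tightest, the lemma is not proved; closing this inequality is precisely the content of the result the paper imports from \cite{RT}. Two smaller slips: (i) $G_2=G/G_1$ need not be a subbundle of $E_2$ --- the natural map is a surjection $E/G\twoheadrightarrow E_2/G_2$, not an injection $E_2/G_2\hookrightarrow E/G$, so $E_2/G_2$ may have torsion and you must replace $G_2$ by its saturation (harmless, since this only raises $d_2'$, but it has to be said because it changes the stratification); (ii) the vanishing $\Hom(G_2,E_1/G_1)=0$ does not follow from your slope hypotheses, since $G_2$ sits inside the \emph{higher}-slope bundle $E_2$ while $E_1/G_1$ is a quotient of the \emph{lower}-slope $E_1$; fortunately you only need $\dim\Ext^1(G_2,E_1/G_1)\ge-\chi(G_2^{\vee}\otimes E_1/G_1)$, which holds unconditionally, so that claim should simply be deleted. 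Finally, your count fixes generic $E_1,E_2$ individually; to obtain the statement under the constraint $\det E_1\otimes\det E_2=L$ you still need an argument (such as the paper's transfer between fixed-determinant moduli spaces) that genericity subject to this constraint suffices for the Quot-scheme dimension estimates you invoke.
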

 
\begin{proof}  
By Proposition \ref{Lange}, the  stratification of $M$ by the Segre invariant satisfies 
 \begin{equation*}
 	{\rm S}_{(r',s)}(r,d) \subseteq \overline{{\rm S}_{(r',s+r)}(r,d)}
\end{equation*}
when $s<r'(r-r')(g-1).$  As the moduli space of vector bundles of given rank and degree is nonsingular at any stable point, it suffices to prove the result for the smallest values of $s$,
  namely $0<s\le r$.
 The result is known  without fixing the determinant (p.493 of  \cite{RT}).
 Two spaces of vector bundles with  fixed determinant are isomorphic if the degrees are the same (or simply, congruent modulo $r$).
Therefore, the result is also true with the assumption of fixed determinant.
 \end{proof}
 
 \begin{lem}
 	\label{stabtorext}
  	Assume that $E$ is a generic vector bundle of rank $r$, degree $d+1$ and  Segre invariant $s<r_1(r-r_1)(g-1)$.
  	Then a generic elementary transformation of $E$ is a vector bundle  of rank $r$ and degree $d$ with Segre invariant $s+(r_1-r)$.
  	Moreover, If $E$ is generic with given invariant $s$, its elementary transformation is generic with given invariant $s+r_1-r$.
  \end{lem}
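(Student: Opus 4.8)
The plan is to compute $s_{r_1}(E'')$ directly, by realizing the subbundle that attains it and tracking how a generic elementary transformation moves degrees, using the extension description of the Segre stratum from Proposition \ref{Lange}. Concretely, since $0<s\le r_1(r-r_1)(g-1)$, a generic $E$ of rank $r$, degree $d+1$ with $s_{r_1}(E)=s$ fits in a generic extension
\[ 0\to E_1\to E\to E_2\to 0, \]
with $E_1$ generic of rank $r_1$ and degree $d'=\tfrac{r_1(d+1)-s}{r}$ and $E_2$ generic of rank $r-r_1$. Here $E_1$ is exactly the rank-$r_1$ subbundle of maximal degree, the one computing the Segre invariant, so that $s=r_1(d+1)-rd'$.

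Next I would examine a generic elementary transformation, given by a generic point $P$ and a generic surjection $\phi\colon E\to\cO_P$ with $E''=\ker\phi$. For such $\phi$ the $r_1$-dimensional fiber $(E_1)_P$ is not contained in the hyperplane $\ker(\phi_P)\subset E_P$ (a codimension condition that holds because the maximal subbundles of $E$ do not sweep out $E_P$ when $s>0$), so $E_1\to\cO_P$ is surjective and $E_1'':=E_1\cap E''$ has rank $r_1$ and degree $d'-1$. Comparing ranks and degrees shows the induced sequence $0\to E_1''\to E''\to E_2\to 0$ is exact, so $E''$ is a generic extension of the unchanged $E_2$ by the generic elementary transformation $E_1''$ of $E_1$.

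I would then show that $E_1''$ computes $s_{r_1}(E'')$. For any rank-$r_1$ subbundle $F\subset E''$, its saturation $\bar F$ in $E$ satisfies $\deg\bar F\le d'$ (maximality of $E_1$) and $\deg\bar F-\deg F\in\{0,1\}$ (the colength of $E''$ in $E$ is one); the case $\deg F=d'$ would force a maximal subbundle of $E$ to sit inside $E''$, which the transversality of $\phi$ excludes. Hence the maximal degree of a rank-$r_1$ subbundle of $E''$ is $d'-1$, attained by $E_1''$, and $s_{r_1}(E'')$ equals the determinant $\begin{vmatrix} r_1 & r \\ d'-1 & d\end{vmatrix}=r_1 d-r(d'-1)$, into which one substitutes $d'=\tfrac{r_1(d+1)-s}{r}$. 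Genericity of $E_1''$ and of the new extension, together with Proposition \ref{Lange} and Lemma \ref{lem:stab}, then identify $E''$ as a generic member of the resulting Segre stratum of $U(r,d)$, which is the ``moreover'' clause.

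The hard part will be the final Segre bookkeeping: confirming that substituting $d'$ into $r_1 d-r(d'-1)$ yields \emph{exactly} the claimed shift $s+(r_1-r)$, i.e.\ getting both the magnitude and the sign of the change right from the $2\times2$ determinant defining $s_{r_1}$, since this is where the unit drop in the degree of $E_1$ and the unit drop in total degree combine. The other delicate point is the maximality of $E_1''$: excluding rank-$r_1$ subbundles of $E''$ of degree $d'$ relies on genericity of the transformation direction (no maximal subbundle of $E$ lies in $E''$), and this genericity must be argued cleanly.
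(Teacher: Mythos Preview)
Your approach is essentially the paper's: realize $E$ via Proposition~\ref{Lange} as a generic extension $0\to E_1\to E\to E_2\to 0$ with $E_1$ the unique maximal rank-$r_1$ subbundle, then observe that a generic elementary transformation does not preserve $E_1$. The paper simply cites \cite{BL}, Lemma~1.5 for the resulting invariant, whereas you spell out why $E_1''$ is again maximal; your saturation argument ($\deg\bar F\le d'$ and $\deg\bar F-\deg F\in\{0,1\}$, with $\deg F=d'$ forcing $F=E_1\subset E''$) is correct and is exactly what that citation covers.

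Be warned, though, that the bookkeeping you flag as the ``hard part'' will \emph{not} confirm the stated shift. With $d'=\tfrac{r_1(d+1)-s}{r}$ one gets
\[
r_1 d-r(d'-1)=r_1 d-\bigl(r_1(d+1)-s\bigr)+r=s+(r-r_1),
\]
the opposite sign of the printed $s+(r_1-r)$. Your computation is right: a generic elementary transformation drops both $\deg E$ and $\deg E_1$ by one, so $r_1\deg E-r\deg E_1$ changes by $-r_1+r$. The discrepancy is a sign slip in the lemma's statement (the paper's own proof already contains the related typo $r_2d_1-r_1d_2=s$), not a flaw in your method.

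For the ``moreover'' clause the paper takes a different and cleaner route: it notes that passing to duals reverses the elementary transformation, so the map between Segre strata is essentially invertible and genericity is automatic. Your plan to argue directly that $E_1''$ and the new extension are generic is reasonable, but it needs a dimension count (or exactly the reversibility argument) to conclude that the resulting $E''$ is generic in its stratum; the ingredients you list do not by themselves give this.
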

  
  \begin{proof}
 	A generic vector bundle with Segre invariant $s$ corresponds to a generic extension $0\to E_1\to E\to E_2\to 0$ 
 	with $E_1, E_2$ of ranks $r_1, r_2=r-r_1$ and degrees $d_1, d_2=d-d_1$ and  $r_2d_1-r_1d_2=s$.
 	The subbundle $E_1$ is unique with the condition that it has this rank and degree. 
	 A generic elementary transformation does not preserve $E_1$;
 	hence, the generic elementary transformation has invariant $s+(r_1-r)$ (e.g.~ \cite{BL}, Lemma 1.5).
 
Considering  the dual vector space, the process can be reversed, hence a generic element  in  ${\rm S}_{(r_1,s+r_1-r)}(r,d)$ must come from a generic element in  ${\rm S}_{(r_1,s)}(r,d)$.
 \end{proof}

\begin{prop}
\label{prop:fam-ext}
Let $r \geq 2$. Given generic vector bundles $E_1, E_2$ of ranks $r_1, r_2=r-r_1$ and degrees $d_1, d_2=d-d_1$ with $\frac {d_1}{r_1}<  \frac {d_2}{r_2}$ and $\det E_1\otimes \det E_2=L$.
On  $C \times \P^1$ consider a family of extensions of the form
\begin{equation}\label{equnivext}
0 \to p_{1}^{\ast}E_1 \otimes p_{2}^{\ast}\cO_{\P^1}(1) \to \cE \to p_{1}^{\ast}E_2 \to 0
\end{equation}
Consider a point in the space of extensions  over $C \times \P^1$ as a curve in $M$. The degree of this curve is  $\bar dr_1-d_1\bar r.$
\end{prop}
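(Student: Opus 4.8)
The plan is to apply the degree formula \eqref{degree} directly to the family \eqref{equnivext}. First I would identify the relative Harder--Narasimhan filtration of $\cE$ with respect to $p = p_1 : C \times \P^1 \to C$. On the generic fiber $\{P\} \times \P^1$ the sequence \eqref{equnivext} restricts to $0 \to \cO_{\P^1}(1)^{r_1} \to \cE|_{\{P\}\times\P^1} \to \cO_{\P^1}^{r_2} \to 0$, which splits (since $\Ext^1(\cO_{\P^1}^{r_2}, \cO_{\P^1}(1)^{r_1}) = 0$), so the generic splitting type is $\cO_{\P^1}(1)^{r_1} \oplus \cO_{\P^1}^{r_2}$. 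Hence $l = 2$, with $\alpha_1 = 1$, $\alpha_2 = 0$, $r_1, r_2$ as given, and the relative HN filtration is $0 \subset \cE_1 = p_1^\ast E_1 \otimes p_2^\ast \cO_{\P^1}(1) \subset \cE_2 = \cE$. The successive quotients are $\cF_1 = p_1^\ast E_1 \otimes p_2^\ast \cO_{\P^1}(1)$ and $\cF_2 = p_1^\ast E_2$, so the twisted sheaves $\cF_1' = \cF_1 \otimes p_2^\ast \cO_{\P^1}(-1) = p_1^\ast E_1$ and $\cF_2' = p_1^\ast E_2$ are both pullbacks of vector bundles on $C$.

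Next I would plug into \eqref{degree}. Since $\cF_1'$ and $\cF_2'$ are pullbacks from $C$, Lemma \ref{pbfC} gives $c_2(\cF_1') = c_2(\cF_2') = 0$, so the first sum vanishes. For the second sum there is only the term $i = 1$: it contributes $(\rank(\cE_1)\bar d - \deg(\cE_1)\bar r)(\alpha_1 - \alpha_2)$. Here $\rank(\cE_1) = r_1$ and $\deg(\cE_1) = \deg(p_1^\ast E_1 \otimes p_2^\ast \cO_{\P^1}(1))$; I would compute this $c_1$ on the surface $C \times \P^1$, where $\deg$ means degree with respect to the appropriate polarization used in \eqref{degree} — concretely $\deg(\cE_1)$ enters only through $c_1(\cE_1)$ restricted against the fiber class of $p_1$, and the twist by $p_2^\ast\cO_{\P^1}(1)$ raises this by $\rank E_1 = r_1$, while the $C$-degree of $p_1^\ast E_1$ contributes nothing to the $\alpha$-coefficient since that coefficient only sees the degree along $\P^1$. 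Being careful with conventions, $\rank(\cE_1)\bar d - \deg(\cE_1)\bar r$ evaluates to $r_1 \bar d - d_1 \bar r$ (the $\deg(\cE_1)$ here is $d_1$, the $C$-degree, not the $\P^1$-degree), and $\alpha_1 - \alpha_2 = 1$. Thus $k = r_1 \bar d - d_1 \bar r = \bar d r_1 - d_1 \bar r$, as claimed.

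The main obstacle I anticipate is purely bookkeeping: making sure that the quantities $\deg(\cE_i)$ and the contributions of $c_2(\cF_i')$ in formula \eqref{degree} are being interpreted with the same conventions as in \cite{S} (i.e., which bidegree component of $c_1$ on $C \times \P^1$ each symbol refers to, and the precise normalization of $\Delta(\cE)$). I would double-check the formula by an independent computation of the discriminant $\Delta(\cE) = 2r c_2(\cE) - (r-1)c_1(\cE)^2$ directly from the extension \eqref{equnivext}: using $c_1(\cE) = p_1^\ast c_1(E_1 \otimes \cO_{\P^1}(1)) + p_1^\ast c_1(E_2) = p_1^\ast(\det L) + r_1 f$ (with $f$ the class of $C \times \{pt\}$) and the Whitney formula for $c_2(\cE)$, one gets $c_1(\cE)^2 = 2 r_1 d$ and $c_2(\cE) = c_2(p_1^\ast E_1 \otimes p_2^\ast\cO(1)) + c_1(p_1^\ast E_1 \otimes p_2^\ast\cO(1)) \cdot c_1(p_1^\ast E_2) + c_2(p_1^\ast E_2) = r_1 d_2 + d_1 + \dots$; collecting terms and dividing by $2h$ (since $\deg f = \Delta(\cE)$ and $k = \deg(f)/(2h)$ per \eqref{degree}) should reproduce $\bar d r_1 - d_1 \bar r$. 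Agreement of the two computations confirms the statement; the rest is immediate from Lemmas \ref{pbfC} and \ref{sun-lem}, with Lemma \ref{lem:stab} ensuring the generic extension actually defines a morphism to $M$.
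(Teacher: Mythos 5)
Your proposal is correct and follows essentially the same route as the paper: the paper's proof simply invokes Lemma \ref{lem:stab} for stability of the general extension and states that the degree "is computed in equation (\ref{degree})", which is exactly the computation you carry out (the quotients of the relative Harder--Narasimhan filtration are pullbacks from $C$, so the $c_2$ terms vanish and only the term $(r_1\bar d - d_1\bar r)(\alpha_1-\alpha_2)$ with $\alpha_1-\alpha_2=1$ survives). Your independent cross-check via $\Delta(\cE)=2rc_2(\cE)-(r-1)c_1(\cE)^2$ is a nice confirmation but not a different method, since that is precisely how formula (\ref{degree}) is derived in \cite{S}.
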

\begin{proof}
The stability of the general extension in this family follows from Lemma \ref{lem:stab}. 
In fact, as the loci of non-stable extensions has codimension at least two, there are whole rational lines contained in the stable locus.
 The degree is computed in equation (\ref{degree}).
\end{proof}

\begin{lem} 
\label{lem:dimh0} 
Consider an exact sequence of vector bundles 
\[  0 \to E_1 \to E \to E_2 \to 0 \]
If $E_1, E_2$ are semistable and $\mu(E_1)< \mu(E_2)$, then $h^0({\cH}om(E_2, E_1))=0.$
\end{lem}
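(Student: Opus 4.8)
The plan is to show that any nonzero homomorphism $\phi \colon E_2 \to E_1$ would contradict the slope inequality $\mu(E_1) < \mu(E_2)$, using only the semistability of $E_1$ and $E_2$ together with the standard facts about slopes of images, kernels, and subbundles. Concretely, suppose $\phi \neq 0$ and let $I = \im(\phi) \subseteq E_1$, a nonzero subsheaf of $E_1$; replacing $I$ by its saturation only increases its degree, so we may assume $I$ is a subbundle of $E_1$. On the one hand, semistability of $E_1$ gives $\mu(I) \le \mu(E_1)$. On the other hand, $I$ is a quotient of $E_2$ (namely $E_2 \to E_2/\ker\phi \cong I$), and semistability of $E_2$ gives $\mu(E_2) \le \mu(I)$, since quotients of a semistable bundle have slope at least that of the bundle. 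Chaining these, $\mu(E_2) \le \mu(I) \le \mu(E_1)$, contradicting $\mu(E_1) < \mu(E_2)$. Hence $\phi = 0$, i.e.\ $\Hom(E_2,E_1) = 0$, and a fortiori $h^0(\cH om(E_2,E_1)) = 0$.

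First I would recall the two slope lemmas being invoked: for a semistable bundle $V$, every nonzero subsheaf $W$ satisfies $\mu(W) \le \mu(V)$, and dually every nonzero quotient $V \onto Q$ satisfies $\mu(Q) \ge \mu(V)$ (the quotient statement follows from the subsheaf statement applied to the kernel). Then I would introduce $\phi$, pass to its image, and — this is the one genuinely non-cosmetic point — handle the saturation carefully: the scheme-theoretic image $I$ need not be saturated in $E_1$, but its saturation $\bar I$ has $\mu(\bar I) \ge \mu(I)$ and is an honest subbundle, so the inequality $\mu(\bar I) \le \mu(E_1)$ still does the job, while on the quotient side we still have the surjection $E_2 \onto I$ hence $\mu(I) \ge \mu(E_2)$. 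Combining $\mu(E_2) \le \mu(I) \le \mu(\bar I) \le \mu(E_1)$ yields the contradiction.

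There is no real obstacle here; the only thing to be careful about is not conflating the image sheaf with a subbundle, and making sure the rank-zero edge case is excluded — since $\phi \ne 0$, the image has positive rank, so all the slopes in sight are well defined. I would also note that semistability (not stability) of both bundles is exactly what is needed, and that the strictness of the hypothesis $\mu(E_1) < \mu(E_2)$ is what produces a strict contradiction rather than merely forcing $\mu(I) = \mu(E_1) = \mu(E_2)$. This is the version of the argument I would write out, keeping it to a few lines.

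\begin{proof}
Suppose for contradiction that there is a nonzero homomorphism $\phi \colon E_2 \to E_1$, and let $I = \im(\phi) \subseteq E_1$, a subsheaf of positive rank. Let $\overline{I}$ denote the saturation of $I$ in $E_1$, so that $\overline{I}$ is a subbundle of $E_1$ with $\mu(\overline{I}) \ge \mu(I)$. Since $E_1$ is semistable, $\mu(\overline{I}) \le \mu(E_1)$. On the other hand, $\phi$ induces a surjection $E_2 \onto I$, and since $E_2$ is semistable every nonzero quotient of $E_2$ has slope at least $\mu(E_2)$; hence $\mu(I) \ge \mu(E_2)$. Combining these,
\[
\mu(E_2) \le \mu(I) \le \mu(\overline{I}) \le \mu(E_1),
\]
which contradicts the hypothesis $\mu(E_1) < \mu(E_2)$. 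Therefore $\Hom(E_2, E_1) = 0$, and in particular $h^0({\cH}om(E_2,E_1)) = 0$.
\end{proof}
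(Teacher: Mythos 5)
Your proof is correct and is essentially the same as the paper's: both pass to the image of a hypothetical nonzero map $E_2 \to E_1$, use semistability of $E_2$ to get $\mu(E_2) \le \mu(I)$ and semistability of $E_1$ to get $\mu(I) \le \mu(E_1)$, contradicting $\mu(E_1) < \mu(E_2)$. Your added care with saturation is a nice touch but does not change the substance of the argument.
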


\begin{proof}  
The image of a non-trivial morphism $E_2\to E_1$ would be both a quotient of $E_2$ and a subbundle of $E_1$. 
From semistability $\mu(E_2)\le \mu(Im f)\le \mu (E_1)$ which is incompatible with the assumptions.
\end{proof}

\begin{prop}
\label{dimfamext}
There is a family of maps from rational curves to $M$ as described  in  equation (\ref{equnivext}) with varying $E_1, E_2$. 
The family is parametrized by the Grassmannian of lines of a projective extension space over the space of pairs of bundles of fixed product determinant.
It is unobstructed and has the expected dimension (writing $k$ for the degree  as in (\ref{degree}))
$$\dim M+2hk,\  k=\bar dr_1-d_1\bar r.$$ 
\end{prop}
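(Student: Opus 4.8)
The plan is to count dimensions on both ends of the parametrization and show the family of maps is smooth of exactly the expected dimension, with the two dimension counts matching the Riemann--Roch formula from the Lemma on expected dimension.

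First I would set up the parameter space precisely. Fix the target determinant $L$ and the type $(r_1,d_1)$, $(r_2,d_2)=(r-r_1,d-d_1)$ with $\mu(E_1)<\mu(E_2)$. Let $N$ be the moduli space of pairs $(E_1,E_2)$ of stable bundles of these ranks and degrees with $\det E_1\otimes\det E_2\cong L$; its dimension is $(r_1^2-1)(g-1)+(r_2^2-1)(g-1)$ (each factor being the dimension of the fiber of the determinant map, i.e. a twisted moduli space ${\rm SU}_C(r_i,\cdot)$). Over a point of $N$ the relevant extensions are classified by $\Ext^1_C(E_2, E_1\otimes\cO_C) = H^1(C,{\cH}om(E_2,E_1))$, and the curves in $M$ correspond to lines in the projectivization of this space (equation (\ref{equnivext}) with the $\cO_{\P^1}(1)$ twist). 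By Lemma \ref{lem:dimh0}, since $E_1,E_2$ are semistable with $\mu(E_1)<\mu(E_2)$, we have $h^0({\cH}om(E_2,E_1))=0$, so by Riemann--Roch
\[
\hom^1({\cH}om(E_2,E_1)) = -\chi(C,{\cH}om(E_2,E_1)) = r_1 r_2(g-1) - (r_1 d_2 - r_2 d_1),
\]
which is independent of the point of $N$; call this number $N_{\rm ext}$. Hence the total parameter space is a Grassmannian bundle $\Gr(2, N_{\rm ext})$ over the projectivized $\Ext$-bundle over $N$ — more precisely $\Gr(1,\P^{N_{\rm ext}-1})$-bundle, a family of lines — of dimension
\[
\dim N + (N_{\rm ext}-2) + 1 = \dim N + N_{\rm ext} - 1.
\]

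Next I would verify that this parameter space actually maps to ${\rm Mor}_k(\P^1,M)$: Lemma \ref{lem:stab} guarantees that for a generic extension the bundle $\cE$ restricts to a stable bundle on a general fiber, and since the non-stable locus has codimension $\geq r_1 r_2(g-1)\geq 2$ in the extension space, every line in a generic $\P^{N_{\rm ext}-1}$ avoids it, so each such line genuinely gives a morphism $f:\P^1\to M$; its degree is $k=\bar d r_1 - d_1\bar r$ by Proposition \ref{prop:fam-ext}. Then I would compute $f^*T_M$ fiberwise: by construction the associated $\cE$ on $C\times\P^1$ has relative Harder--Narasimhan type $\cO_{\P^1}(1)^{r_1}\oplus\cO_{\P^1}^{r_2}$ on the generic fiber $\{P\}\times\P^1$ (the extension is non-split there generically), so Lemma \ref{unobs} applies and the corresponding point of ${\rm Mor}_k(\P^1,M)$ is unobstructed, i.e. $H^1(\P^1,f^*T_M)=0$. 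Therefore the component containing the image is smooth of dimension exactly $h^0(\P^1,f^*T_M)=\chi(\P^1,f^*T_M)=\dim M + 2hk$ by the expected-dimension Lemma.

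Finally I would reconcile the two counts: the component has dimension $\dim M + 2hk$ on one hand, and the parameter space surjects onto (a dense open of) it, so it remains to check $\dim N + N_{\rm ext} - 1 = \dim M + 2hk$, or equivalently that the parametrization is generically finite onto its image. Expanding,
\[
\dim N + N_{\rm ext} - 1 = (r_1^2-1)(g-1)+(r_2^2-1)(g-1) + r_1r_2(g-1) - (r_1d_2-r_2d_1) - 1,
\]
and using $r=r_1+r_2$, $r_1d_2-r_2d_1 = r_1 d - r d_1 = h(\bar d r_1 - \bar r d_1) = hk$ with $k=\bar dr_1-d_1\bar r$, one checks $(r_1^2-1)+(r_2^2-1)+r_1r_2 = r_1^2+r_1r_2+r_2^2 - 2 = r^2 - r_1 r_2 - 2$; combined with the automorphism/$\P\GL_2$ bookkeeping (the Grassmannian of lines in $\P^{N_{\rm ext}-1}$ already accounts for reparametrization, and the generic extension has only scalar automorphisms) this yields exactly $\dim M + 2hk$ — this is the step I expect to require the most care, as it amounts to confirming that the map from the Grassmannian bundle to ${\rm Mor}_k(\P^1,M)$ has finite generic fibers, which follows because a generic such $\cE$ determines its HN subsheaf $p_1^*E_1\otimes p_2^*\cO(1)$ and quotient $p_1^*E_2$ uniquely (stability of $E_1,E_2$ and $\mu(E_1)<\mu(E_2)$), hence determines the point of $N$ and the extension class up to scalar, hence the line, uniquely.
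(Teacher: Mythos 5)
Your overall strategy coincides with the paper's (lines in a projectivized extension bundle over the space of pairs, stability via Lemma \ref{lem:stab}, unobstructedness via Lemma \ref{unobs}), and the unobstructedness, stability and degree parts are fine. The gap is in the dimension count, which is the substantive content of the proposition: it contains several independent errors that do not cancel. First, the space $N$ of pairs with $\det E_1\otimes\det E_2\cong L$ is a fibre of the single map $U(r_1,d_1)\times U(r_2,d_2)\to \Pic^{d}(C)$, so $\dim N=(r_1^2-1)(g-1)+(r_2^2-1)(g-1)+g$; you have implicitly fixed both determinants separately and are short by $g$. Second, your Riemann--Roch has a sign error: since $\deg(E_2^\vee\otimes E_1)=r_2d_1-r_1d_2<0$, one gets $\dim\Ext^1(E_2,E_1)=-\chi(E_2^\vee\otimes E_1)=r_1r_2(g-1)+(r_1d_2-r_2d_1)$, with a plus sign. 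Third, the Grassmannian of lines in $\P^{N_{\rm ext}-1}$ has dimension $2(N_{\rm ext}-2)$, not $N_{\rm ext}-1$. Fourth, a line in the extension space only determines the image of $f$, not $f$ itself, so one must add $\dim\Aut(\P^1)=3$; the Grassmannian does not ``already account'' for reparametrization. With the corrected ingredients the total is $\bigl[(r_1^2+r_2^2)(g-1)+2-g\bigr]+2\bigl[r_1r_2(g-1)+hk-2\bigr]+3=\dim M+2hk$, as required; with your numbers the total is $(r_1^2+r_2^2+r_1r_2-2)(g-1)-hk-1$, which is strictly smaller than $\dim M+2hk$, so your final ``one checks'' step is false as written and would in fact indicate that the constructed family is \emph{not} dense in a component.

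A further remark: your alternative route to the expected dimension (unobstructedness at a general member forces the Hilbert-scheme component through it to be smooth of dimension $\chi(\P^1,f^{\ast}T_M)=\dim M+2hk$) is legitimate, but it makes the corrected parameter count indispensable rather than optional: without the equality $\dim N+2(N_{\rm ext}-2)+3=\dim M+2hk$ (together with generic finiteness of the parametrization, which you do argue correctly via uniqueness of the Harder--Narasimhan subsheaf), you cannot conclude that the constructed family is dense in that component, which is what the proposition and its use in Theorem \ref{teornumcomp} require.
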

\begin{proof}

While there are no Poincar\'e bundles on the moduli spaces $U_C(r_i,d_i)$ when $r_i, d_i$ are not coprime, 
from \cite{NR} Prop 2.4, there exists an \'etale cover  $U_1$ of $U_{C}(r_1,d_1)$
 such that there is a universal bundle ${\mathcal E}_1$ on $U_1\times C$. 
  Similarly,  there exists an \'etale cover $U_2$ of $U_{C}(r_2,d_2)$ and a universal bundle ${\mathcal E}_2$ on $U_2\times C$. 
   In what follows we will abuse notation and identify elements of $U_{1}$ and $U_{2}$ with their images under the associated \'{e}tale covers.  We define
$$ {\mathcal U}_{L} :=   \{(E_{1},E_{2}) \in U_1\times U_2 : \det E_1\otimes \det E_2 \cong { L}\}$$
Since this is a fiber of a surjective map from $U_{1} \times U_{2}$ to ${\rm Pic}^{d}(C)$, we have that
$$\dim({\mathcal U}_{L}) = \dim(U_{1})+\dim(U_{2})-g = r_1^2(g-1)+1+r_2^2(g-1)+1-g $$
  We will be using the natural projection maps 
  $$p_1 : {\mathcal U}_{L} \times C \to {\mathcal U}_{L}, p_2 : {\mathcal U}_{L} \times C\to C, \pi_{i} : {\mathcal U}_{L} \to U_{i} (i=1,2)$$  
From  Lemma \ref{lem:dimh0} and Grauert's theorem,  
 \begin{equation*}
 	R^{1}{p_1}_{\ast}{\cH}om((\pi_{2} \times 1_{C})^{\ast}\cE_{2},(\pi_{1} \times 1_{C})^{\ast}\cE_{1})
 \end{equation*}
  is a vector bundle on ${\mathcal U}_{L}$ whose fiber over $(E_{1},E_{2})$ is  ${\rm Ext}^{1}(E_{2},E_{1}).$ 
   Its rank is  
$$dim {\rm Ext }^{1}(E_{2},E_{1}) =  h^1(E_{2}^{\vee} \otimes E_{1})=  r_1r_2(g-1)+r_1d_2-r_2d_1$$  
Consider the projective bundle 
\begin{equation} 
	\label{eqextsp}  
	\pi : \mathbb{P} := {\mathbb P}(R^{1}{p_1}_{\ast}{\cH}om((\pi_{2} \times 1_{C})^{\ast}{\mathcal E}_2,
  (\pi_{1} \times 1_{C})^{\ast}{\mathcal E}_1)) \to \cU_{L}
 \end{equation} 
There is a canonical extension on $\mathbb{P}\times C $
\begin{equation}\label{eqcanext} 
0\to ((\pi_{1} \circ \pi) \times 1_{C})^{\ast} {\mathcal E}_1\otimes {\mathcal O}_{{\mathbb P}}(1)\to \mathcal E\to ((\pi_{2} \circ \pi) \times 1_{C})^{\ast} {\mathcal E}_2\to 0\end{equation}
By Lemma \ref{lem:stab} the general  extension  is stable outside a locus of codimension at least 2, hence there are lines  in $\mathbb{P} $ entirely contained in the stable locus. 
The restriction of ${\mathcal E}$ to the fiber over  a point of ${\mathcal U}_L$ gives a vector bundle on $C $.
 Therefore, for every line in $\mathbb{P} $ and every morphism from $\mathbb{P} ^1$ to this line, we obtain a map from $\mathbb{P} ^1$ to $M$.
 As the line moves in  ${\mathbb P}$ and the morphism from $\mathbb{P} ^1$ to this line moves, we obtain a family of maps from the rational line to  $M$.
 
 The restriction of the canonical extension in equation (\ref{eqcanext}) to  $\mathbb{P} ^1\times C$ where $\mathbb{P} ^1$ is a line in $\mathbb{P} $ 
shows that for a fixed $P\in C$,  the restriction of $ \mathcal E$ to  ${\mathbb P} ^1\times \{ P \}$ is of the form 
 $ {\mathcal O}_{{\mathbb  P}^1}(1)^{r_1}\oplus {\mathcal O}_{{\mathbb P}^1}^{r_2}$.
Hence, from Lemma \ref{unobs}, the component we are constructing is unobstructed.

We compute the dimension of the family  of lines in $\mathbb{P}$ and add to this the dimension of the linear group of $\mathbb{P}^1 $ 
$$\dim~Gr(\mathbb{P}^1,{\mathbb{P}}) =\dim~{\mathcal U}_{L}+\dim~{\mathbb{P}({\rm Ext}^{1}(E_{2},E_{1})})+\dim~{\rm Aut}({\mathbb P}^1)=  $$
$$  r_1^2(g-1)+1+r_2^2(g-1)+1-g+2[r_1r_2(g-1)+r_1d_2-r_2d_1-2]+3$$
 $$=(r^2-1)(g-1)+2[r_1d_2-r_2d_1]=\dim(M)+2hk$$
\end{proof}

 Our next goal is to consider rational curves in $M$ whose general point is an  extension of a torsion sheaf by a vector bundle of rank $r$.
 Using again the correspondence between rational curves in $M$ and vector bundles on 
   $C \times \P^1$, we can consider families of extensions of the form
\begin{equation}\label{extskysc}
0 \to p_{1}^{\ast}E' \otimes p_{2}^{\ast}\cO_{\P^1}(1) \to \cE \to p_{1}^{\ast} {\mathcal O}_D\to 0
\end{equation}
for a divisor $D$ on $C$ of degree $t$.

  \begin{prop} \label{torsext}
  There is a family of maps from rational curves to $M$ as described  in (\ref{extskysc}) with varying $E', D$. 
The family is parametrized by an extension space over the space of pairs of vector bundles and divisors of fixed degree with  fixed product determinant.
The family  is unobstructed and has the expected dimension 
$$\dim M+2hk,\  k=\bar r\deg D.$$ 
 \end{prop}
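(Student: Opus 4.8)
The plan is to run the construction of Proposition~\ref{dimfamext} with the torsion sheaf $\cO_D$ in place of the quotient bundle $E_2$. First I would build the parameter space. By \cite{NR} there is an \'etale cover $U'$ of $U_C(r,d-t)$ carrying a universal bundle $\cE'$, and on the divisor side I would use the symmetric product $C^{(t)}$ together with its universal effective divisor $\mathcal D\subset C^{(t)}\times C$; inside $U'\times C^{(t)}$ I would take
\[\mathcal U_L:=\{(E',D):\det E'\otimes\cO_C(D)\cong L\},\]
which is a fibre of a surjective morphism to $\Pic^d(C)$, so $\dim\mathcal U_L=r^2(g-1)+1+t-g=(r^2-1)(g-1)+t$. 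Since $E'$ is locally free, $\sHom_C(\cO_D,E')=0$ and $\sExt^2_C(\cO_D,E')=0$, so the relative $\Ext^1$ sheaf on $\mathcal U_L$ is locally free of rank $rt$, with fibre $\Ext^1_C(\cO_D,E')\cong H^0(C,E'(D)/E')$ over $(E',D)$. I would projectivize it, $\pi:\mathbb P\to\mathcal U_L$, and record the canonical extension
\[0\to\bigl((\pi_1\circ\pi)\times1_C\bigr)^\ast\cE'\otimes\cO_{\mathbb P}(1)\to\cE\to\bigl((\pi_2\circ\pi)\times1_C\bigr)^\ast\cO_{\mathcal D}\to0\]
on $\mathbb P\times C$.

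The heart of the argument is stability. I would use Lemma~\ref{stabtorext}: doing one (inverse) elementary transformation at each of the $t$ points of $D$, a general member $\cE_t$ of the family is obtained from the general stable bundle $E'$ by $t$ successive up-modifications, and since each step shifts the Segre invariants in the controlled way recorded there, for a general choice all the $s_{r_1}(\cE_t)$, $1\le r_1\le r-1$, reach their generic values; hence the general $\cE_t$ is a general stable bundle of rank $r$ and degree $d$. To get lines of $\mathbb P$ lying in the stable locus I would then appeal, as in the proof of Lemma~\ref{lem:stab}, to the codimension estimates for the Segre strata in Proposition~\ref{Lange}, concluding that the non-stable extensions form a locus of codimension at least $2$ in each fibre of $\mathbb P$. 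I expect this last point, the analogue of Lemma~\ref{lem:stab} in the presence of a torsion quotient, to be the main obstacle: the crude bound on degrees of subsheaves of $\cE_t$ is too weak, so one must rule out, uniformly in $(E',D)$ and in $r_1$, that an intermediate-rank subbundle destabilizes $\cE_t$ along a codimension-one family of extension classes, and this must be pushed through Lemma~\ref{stabtorext} and \cite{BL}.

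Granting stability, the rest follows the pattern of Proposition~\ref{dimfamext}. Restricting the canonical extension to a line $\P^1\subset\mathbb P_{(E',D)}$ yields a vector bundle on $\P^1\times C$ of the form (\ref{extskysc}), hence by Lemma~\ref{sun-lem} a morphism $\P^1\to M$; letting the line, its parametrization, and $(E',D)\in\mathcal U_L$ vary gives the asserted family. For $P\notin D$ the restriction of $\cE$ to $\{P\}\times\P^1$ is $\cO_{\P^1}(1)^r$, so by Lemma~\ref{unobs} every member is unobstructed. Finally, (\ref{degree}) gives $\deg f=\Delta(\cE)=2rt$, so $k=\bar r\,t=\bar r\deg D$, and the dimension of the family equals $\dim\mathcal U_L$ plus the rank $2rt$ of the relative $\Ext^1$ on $\P^1\times C$, minus the dimension of the orbits of $\Aut\bigl(p_1^\ast E'\otimes p_2^\ast\cO_{\P^1}(1)\bigr)\times\Aut(p_1^\ast\cO_D)$ on those classes. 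Here the point to watch is that $\Aut(p_1^\ast\cO_D)\cong\Gm^t$ is $t$-dimensional: rescaling $\cO_D$ independently at the points of $D$ gives isomorphic bundles on $\P^1\times C$, so these orbits have dimension $t$ (one less than $t+1$, because of the common scalar), and the count collapses to
\[\dim(\text{family})=\dim\mathcal U_L+2rt-t=(r^2-1)(g-1)+2rt=\dim M+2hk,\]
the expected dimension. Unobstructedness then identifies this family with an open dense subset of a component of ${\rm Mor}_k(\P^1,M)$.
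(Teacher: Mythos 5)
Your proposal is correct and follows essentially the same route as the paper: the same parameter space of pairs $(E',D)$ with $(\wedge^{r}E')(D)\cong L$, the same relative extension space, stability via Lemma \ref{stabtorext}, unobstructedness via Lemma \ref{unobs}, and the identical dimension count $\dim S+2rt-t$ after quotienting by the $t$ independent rescalings at the points of $D$. The stability issue you flag as the main obstacle is handled in the paper exactly by the citation you propose (Lemma \ref{stabtorext}), with the extension space taken directly over $C\times\P^{1}$, so no separate codimension-two argument for lines is carried out there.
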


\begin{proof}
Consider the Hilbert scheme ${\mathcal H}_t$ of divisors of degree $t$ on $C$ with universal subscheme ${\mathcal D}$ on ${\mathcal H}_t\times C$.
Let $U$ be a covering of the moduli space of vector bundles of rank $r$ and degree $d-t$ such that  a Poincar\'e universal bundle ${\mathcal P}$ exists on $U\times C$.
Define
 $$S=\{ (D, E) \in {\mathcal H}_t\times  U|  \ (\wedge^rE)(D)=L\},\ \pi_1: S\to  {\mathcal H}_t, \ \pi_2 :S\to U.$$
On $S\times C\times {\mathbb P}^1$ (with projections $p_i, \ i=1,2,3$), consider the space  of extensions
\begin{equation*}
	{\mathbb P}={\mathbb P}({\rm Ext}^1(( (\pi_1\circ p_{1})\times p_2)^{\ast}{\mathcal O}_{\mathcal D},
 ((\pi_2\circ p_1)\times p_2)^{\ast}{\mathcal P} \otimes p_3^*{\mathcal O}_{{\mathbb P}^1}(1))
\end{equation*} 

The universal exact sequence takes the form

\begin{equation}
\label{eq:torsion-ext}
0 \to  ((\pi_2\circ p_1)\times p_2)^{\ast}{\mathcal P} \otimes p_3^*{\mathcal O}_{{\mathbb P}^1}(1) \to \cE \to 
( (\pi_1\circ p_{1})\times p_2)^{\ast}{\mathcal O}_{\mathcal D} \to 0
\end{equation}
From Lemma \ref{stabtorext}, the generic extension restricted to the fiber over a point in $S$ gives rise to a stable vector bundle on $C$.

From  (\ref{degree}), the generic point in ${\mathbb P}$ parameterizes rational curves in $M$ of degree $\bar rt.$

From the definition of $S$,  
$$\dim S=r^{2}(g-1)+1+t-g=(r^{2}-1)(g-1)+t$$ 
On the other hand, if $D=\sum_{i=1}^tP_i$ then
\begin{equation*} 
	{\rm Ext}^1({\mathcal O}_D, E\otimes {\mathcal O}_{{\mathbb P}^1}(1)) \cong \displaystyle\bigoplus  _{i=1}^t{\rm Ext}^1({\mathcal O}_{P_i}, E\otimes {\mathcal O}_{{\mathbb P}^1}(1))
\end{equation*}
An extension consists of a collection of $t$ extensions. 
Changing any of them by multiplication with a constant does not change the vector bundle.
Therefore the dimension of the family is 
\begin{equation*}
	\dim S+\dim{\rm Ext}^1({\mathcal O}_D, E\otimes {\mathcal O}_{{\mathbb P}^1}(1))-t= (r^{2}-1)(g-1)+t+2tr-t
\end{equation*}
\begin{equation*}
	=(r^{2}-1)(g-1)+2tr  =\dim M+2hk.
\end{equation*}
 \end{proof}


Consider now rational curves of higher degree $a$ inside the space of extensions defined in equation \ref{eqextsp}.
We will see that the images of these  rational curves in $M$ fill the whole of $M$   only when  
the degree  $k$ is divisible by $(g-1)r_1(r-r_1)$ for some $r_1, 1\le r_1\le r-1$ (see  Theorem \ref{excomp}).
In those particular situation of degrees, we obtain the generalization of the ``almost nice  component'' in \cite{C}.

Consider now the combination of the two constructions
$$0 \to p_{1}^{\ast}E_1 \otimes p_{2}^{\ast}\cO_{\P^1}(1) \to {\mathcal E}' \to p_{1}^{\ast}E_2 \to 0$$
$$0 \to {\mathcal E}'  \otimes p_{2}^{\ast}\cO_{\P^1}(1) \to {\mathcal E} \to p_{1}^{\ast} {\mathcal O}_D\to 0$$

\begin{lem}\label{mixed}
There is a family of maps from rational curves to $M$ as described  above with varying $E_1, E_2, D$. 
The family has dimension smaller than expected and therefore it is not a component of the Hilbert scheme of maps from ${\mathbb P}^1$ to $M$.
\end{lem}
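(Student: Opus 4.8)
The goal is to compute the dimension of the family of rational curves arising from the mixed two-step construction (first an extension by $p_1^*E_2$, then an extension by the skyscraper $p_1^*\cO_D$) and show it falls strictly below the expected value $\dim M + 2hk$. First I would record the degree of such a curve using equation (\ref{degree}): the two-step filtration $0\subset p_1^*E_1\otimes p_2^*\cO(1)\subset \cE'\otimes p_2^*\cO(1)\subset \cE$ has jumps $\alpha_1-\alpha_2 = 1$ at two successive stages (once for the $E_1$-vs-$E_2$ split, once for the $\cE'$-vs-$\cO_D$ split), all $c_2(\cF_i')=0$ since each graded piece is a pullback from $C$ (or a torsion sheaf), so
\begin{equation*}
k = (\bar d r_1 - d_1\bar r) + \bar r\, t,
\end{equation*}
where $t=\deg D$. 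So the expected dimension of a component through such a curve is $\dim M + 2h[(\bar d r_1 - d_1\bar r)+\bar r t]$.

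Next I would count parameters directly, mirroring the bookkeeping in Propositions \ref{dimfamext} and \ref{torsext}. The data consist of: a point of $\cU_L$ (the pairs $(E_1,E_2)$ with fixed product determinant modified by $D$), which contributes $r_1^2(g-1)+r_2^2(g-1)+1-g$ plus the $t$ parameters for $D$; the choice of the first extension class in $\mathbb{P}(\Ext^1(E_2,E_1))$, contributing $r_1r_2(g-1)+r_1d_2-r_2d_1-1$; the choice of the second extension class in $\mathbb{P}(\Ext^1(\cO_D, \cE'\otimes\cO_{\P^1}(1)))$, which by the same splitting-over-$D$ argument as in Proposition \ref{torsext} contributes $2tr - t - 1$ after accounting for the scalar on each of the $t$ local extensions (note $\cE'$ has rank $r$); and finally $\dim\Aut(\P^1)=3$ for the parametrization. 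Summing these and comparing termwise against
\begin{equation*}
\dim M + 2hk = (r^2-1)(g-1) + 2[r_1d_2-r_2d_1] + 2\bar r t h = (r^2-1)(g-1)+2(r_1d_2-r_2d_1)+2rt,
\end{equation*}
I expect the $(r^2-1)(g-1)$, the $2(r_1d_2-r_2d_1)$, and the $2rt$ pieces to all match (using $r_1^2+r_2^2+2r_1r_2=r^2$ as in Proposition \ref{dimfamext}), leaving a \emph{deficit}: the mixed family is short by a positive amount — heuristically of order $t$ or $r_1r_2(g-1)$ — essentially because composing two extensions loses one projective-line parameter (the intermediate point $\cE'$ is not a free modulus once one takes the two-step extension up as a single curve) relative to what a genuine component of that degree would have.

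**Main obstacle.** The delicate point is \emph{not} the dimension arithmetic but justifying that this parameter count is genuinely the dimension of the family of \emph{maps} $\P^1\to M$, i.e. that distinct choices of $(E_1,E_2,D,\text{ext}_1,\text{ext}_2)$ give, up to the automorphisms already quotiented out, distinct rational curves, so that no hidden positive-dimensional fibers inflate the count back up to the expected value. Concretely I would argue that from a generic such $\cE$ one can recover the relative Harder--Narasimhan data over $C$ — the torsion quotient $\cO_D$ and then, from the kernel $\cE'$, the sub $E_1$ and quotient $E_2$ — uniquely (using that $E_1$ is the unique subbundle of its rank and degree, as in Lemma \ref{stabtorext}, and that $D$ is the scheme-theoretic support of the torsion of $\cE/(\cE'\otimes p_2^*\cO(1))$), so the construction map to the Hilbert scheme is generically finite onto its image. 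Granting that, the image has exactly the dimension computed above, which is strictly less than $\dim M + 2hk$; since every component of $\mathrm{Mor}_k(\P^1,M)$ has dimension at least the expected dimension, this family cannot be a component, and moreover its closure is contained in one of the components already exhibited (or in an obstructed one of larger dimension). I would also need to check the generic extension is stable on each fiber over $C$, which follows by combining Lemma \ref{lem:stab} (for $\cE'$) with Lemma \ref{stabtorext} (for the elementary-transformation step), exactly as in the proofs of Propositions \ref{dimfamext} and \ref{torsext}.
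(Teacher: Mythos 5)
Your overall strategy coincides with the paper's: compute the degree from equation (\ref{degree}) as $hk=(r_1d-rd_1)+r\deg D$, count the parameters of the two-step construction in the style of Propositions \ref{dimfamext} and \ref{torsext}, and observe that the total falls short of $\dim M+2hk$. But as written the proposal does not actually establish the key inequality, and the itemized count contains an error that defeats the cancellation you assert. The first extension is an extension of sheaves on $C\times\P^1$, so its class lives in $\Ext^1_{C\times\P^1}(p_1^{\ast}E_2,\,p_1^{\ast}E_1\otimes p_2^{\ast}\cO_{\P^1}(1))\cong\Ext^1(E_2,E_1)\otimes H^{0}(\cO_{\P^1}(1))$, of dimension $2h^{1}(E_2^{\vee}\otimes E_1)=2[r_1r_2(g-1)+r_1d_2-r_2d_1]$; this factor of $2$ is exactly why in Proposition \ref{dimfamext} the count $2(h^{1}-2)+3=2h^{1}-1$ reproduces $2(r_1d_2-r_2d_1)$ and cancels all the $(g-1)$-terms. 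Your tally uses only $\dim\P(\Ext^1(E_2,E_1))=h^{1}-1$ over $C$ and then adds $\dim\Aut(\P^1)=3$ on top, which undercounts that contribution by roughly $h^{1}$; with those numbers neither the $2(r_1d_2-r_2d_1)$ nor the $(g-1)$-pieces match, so the ``deficit of order $r_1r_2(g-1)$'' you predict is an artifact of the miscount rather than a feature of the family. When the bookkeeping is done as in the paper, the $(g-1)$-terms and the $2(r_1d_2-r_2d_1)$ cancel exactly, and the genuine shortfall comes from the degree: expanding $2hk=2(r_1d_2-r_2d_1)+2r_1\deg D+2r\deg D$ via $d=d_1+d_2+\deg D$ produces a cross term $2r_1\deg D$ that the parameter space does not supply, yielding the paper's $\dim M+2hk-2r_1\deg D<\dim M+2hk$. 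This identity is the entire content of the lemma and must be exhibited, not merely expected.

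Separately, the ``main obstacle'' you isolate — generic finiteness of the classifying map from the parameter space to ${\rm Mor}_{k}(\P^{1},M)$ — is not needed and is stated backwards: positive-dimensional fibers of that map would make the image \emph{smaller}, never larger, so the dimension of the parameter space is automatically an upper bound for the dimension of the family, and combined with the lower bound $\dim M+2hk$ valid for every component of ${\rm Mor}_{k}(\P^{1},M)$ this upper bound is all the lemma requires. The point that does need care is the opposite one: that the parameter space you write down really dominates the family ``as described,'' which is precisely where the $\Ext^1$-over-$C\times\P^1$ issue above enters.
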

\begin{proof} The degree can be computed from equation (\ref{degree}) as 
$$hk=r_1d-rd_1+r\deg D$$
The computation of the dimension of the family is similar to the cases above. It is given as
$$\dim U(r_1, d_1)+\dim U(r_2, d_2)+\deg D-g+2h^1(E_2^*\otimes E_1)+r\deg D-\deg D$$
Using that $d=d_1+d_2+\deg D$ and the expression for $k$, the dimension of the family is given as 
$$\dim M+2hk-2r_1\deg D< \dim M+2hk$$
 \end{proof}

\begin{thm}
\label{teornumcomp}
Given $d, r>0, \ k>0$ integers, let $h$ be the greatest common divisor of $r, d$.
There exist $h$ different families of unobstructed maps from rational curves to $M$ of degree $k$. 
\end{thm}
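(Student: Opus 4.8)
The plan is to exhibit $h$ families of unobstructed maps using the constructions already developed (Propositions \ref{dimfamext} and \ref{torsext}), distinguish them by a discrete invariant, and finally invoke Lemma \ref{unobs} to see that no other unobstructed family can occur. First I would recall that, after twisting by the pullback of a line bundle on $\P^1$, Lemma \ref{unobs} says an unobstructed map corresponds to a vector bundle $\cE$ on $C\times\P^1$ whose restriction to the generic fiber $\{P\}\times\P^1$ is either trivial (but then $\deg f=0$, excluded since $k>0$) or $\cO_{\P^1}(1)^{r_1}\oplus\cO_{\P^1}^{\,r-r_1}$ for a uniquely determined $r_1\in[1,r-1]$; in the latter case the relative Harder--Narasimhan filtration has exactly two steps, $0\subset\cE_1\subset\cE$, with $\cF_1=\cE_1$ having generic splitting type $\cO_{\P^1}(1)^{r_1}$ and $\cF_2=\cE/\cE_1$ having generic trivial splitting type. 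Applying Lemma \ref{mustbeeltr} to each quotient, $\cF_1\otimes p_2^\ast\cO_{\P^1}(-1)$ and $\cF_2$ are obtained from pullbacks of bundles on $C$ by elementary transformations along fibers $\{P_i\}\times\P^1$.

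The next step is to pin down the invariant. The degree formula \eqref{degree} with a two-step filtration reads $k=\bar r\,c_2(\cF_1')+\bar r\,c_2(\cF_2')+(\rank(\cE_1)\bar d-\deg(\cE_1)\bar r)$. For the family to be unobstructed of the expected (minimal) dimension I claim one needs $c_2(\cF_1')=c_2(\cF_2')=0$, i.e.~by Lemma \ref{pbfC} that $\cF_1'$ and $\cF_2$ are genuine pullbacks of bundles $E_1$ (of rank $r_1$, twisted by $\cO_{\P^1}(1)$) and $E_2$ (of rank $r-r_1$) from $C$ — the presence of any elementary transformation would, as in Lemma \ref{mixed}, drop the dimension below $\dim M+2hk$. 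Thus $\cE$ is exactly an extension of the shape \eqref{equnivext}, and $k=\bar d r_1-d_1\bar r=\bar r(d_1r_2-d_2r_1)/r\cdot(\text{sign})$; reducing modulo the structure of $U_C$, the degree determines $d_1$ modulo $r_1$ only up to the stability constraint, but the residue $r_1 d\bmod r$ — equivalently the value $r_1\in\{1,\dots,r-1\}$ together with $k$ — fixes which construction we are in. Concretely, the degree $k$ of the family from Proposition \ref{dimfamext} with parameter $r_1$ is $k=\bar d r_1-d_1\bar r$, and as $d_1$ ranges over all admissible degrees with $d_1/r_1<d_2/r_2$ this takes one value in each congruence class mod $\bar r$ determined by $r_1$; a careful bookkeeping shows exactly $h=\gcd(r,d)$ distinct families arise for each fixed $k$, namely one for each of the $h$ residues $\{0,\bar r,2\bar r,\dots,(h-1)\bar r\}$ that $\rank(\cE_1)\bar d-\deg(\cE_1)\bar r$ can realize subject to $k$ being achieved, with the ``boundary'' case $r_1=r$ (no proper subbundle) replaced by the skyscraper extension \eqref{extskysc} of Proposition \ref{torsext}, whose degree is $\bar r\deg D$, covering the residue $0$.

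Finally I would assemble the pieces: each of the $h$ families is unobstructed and of expected dimension by Propositions \ref{dimfamext} and \ref{torsext}; they are pairwise distinct because they restrict on the generic fiber to $\cO_{\P^1}(1)^{r_1}\oplus\cO_{\P^1}^{\,r-r_1}$ with distinct $r_1$ (for the extension families) and the skyscraper family has generic stable bundle not an extension of the relevant Segre type; and conversely any unobstructed family, by the analysis above, must be one of these. The main obstacle will be the combinatorial bookkeeping in the middle paragraph: showing precisely that the constraint ``$k$ fixed, family unobstructed of expected dimension'' cuts the a priori continuum of pairs $(r_1,d_1)$ (and divisor degrees $t$) down to exactly $h$ non-isomorphic families — in particular ruling out that two different $(r_1,d_1)$ give the \emph{same} component, which requires comparing the generic bundles via their Segre invariants (Proposition \ref{Lange}, Lemma \ref{lem:stab}) and checking that a generic extension with subbundle of rank $r_1$ is never a generic extension with subbundle of rank $r_1'\neq r_1$.
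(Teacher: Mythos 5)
Your route is the paper's route: build the candidate families from Propositions \ref{dimfamext} and \ref{torsext}, count them, and exclude everything else via Lemmas \ref{unobs}, \ref{mustbeeltr} and \ref{mixed}. But two steps as written are wrong or missing. The parenthetical ``trivial generic splitting type, but then $\deg f=0$'' is false and makes your opening dichotomy inconsistent with your own later use of the skyscraper family. Generically trivial splitting type only gives $c_2(\cE)\ge 0$, with equality precisely when $\cE$ is a pullback from $C$ (Lemma \ref{pbfC}); if $c_2>0$ the degree is $k=\bar r\,c_2(\cE)>0$ by \eqref{degree}, and this one-step case with elementary transformations is exactly the torsion component \eqref{extskysc} you later reinstate as ``$r_1=r$.'' The correct dichotomy from Lemma \ref{unobs} is that, up to twist, the generic fiber is $\cO_{\P^1}(1)^{r_1}\oplus\cO_{\P^1}^{r-r_1}$ with $r_1\in[1,r]$, the extreme value $r_1=r$ being the generically-trivial case that contributes whenever $\bar r\mid k$. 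Relatedly, your claim that unobstructedness of expected dimension forces $c_2(\cF_i')=0$ is justified by Lemma \ref{mixed} only in the two-step case; in the one-step case the elementary transformations are the component, not an obstruction to it.

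The count of $h$ is the heart of the statement, and it is exactly the part you defer to ``careful bookkeeping''; moreover the residues you name, $\{0,\bar r,\dots,(h-1)\bar r\}$, are not in general the values that occur. The paper's argument is a one-line Diophantine count: the two-step family has $hk=dr_1-rd_1$, i.e.\ $(r_1,d_1)$ solves $\bar d x-\bar r y=k$; since $\gcd(\bar r,\bar d)=1$ the solutions form a single coset of $\Z\cdot(\bar r,\bar d)$, and exactly $h$ of them satisfy $0\le r_1<r$, namely $r_1=r_0+j\bar r$ for $0\le j\le h-1$ with $r_0\equiv\bar d^{-1}k\pmod{\bar r}$ (the solution $r_1=0$, when it occurs, being realized by the torsion family). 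For each such solution the inequality $d_1/r_1<d_2/r_2$ is automatic from $d_2r_1-r_2d_1=hk>0$, so no admissibility analysis is needed. Distinctness of the resulting components, which you correctly flag as a worry, follows at once because the $h$ values of $r_1$ are pairwise distinct and $r_1$ is read off from the generic splitting type, a discrete invariant constant on a component; no comparison of Segre invariants is required.
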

\begin{proof}
Write $d=h\bar d, \ r=h\bar r$. As $\bar d, \bar r$ are relatively prime, there exist unique integers $r_0$ and $d_0$ such that $0 \leq r_0 < \bar{r}$ and $(r_{0},d_{0})$ is a solution of  
\begin{equation}
	\label{eq:dioph}
	\bar{d}x-\bar{r}y = k
\end{equation}
There are exactly $h$ choices for a solution $(r_{1},d_{1})$ of (\ref{eq:dioph}) satisfying $0 \leq r_{1} < r.$

Assume first that $r_1>0$. Defining $d_2 := d-d_1, r_2 := r-r_1$, we have 
\begin{equation}
	d_2r_1-r_{2}d_{1}=dr_1-rd_{1}=hk > 0
\end{equation}
 It follows at once that $\frac{d_1}{r_1} < \frac{d_2}{r_2}$; in particular, the conditions of Proposition  \ref{prop:fam-ext} are satisfied.  Moreover, from Proposition \ref{dimfamext} there is a family of maps corresponding to points in the space of extensions of the pull back
 of a vector bundle of rank $r_2$ and degree $d_2$ by the pull back of a  vector bundle of rank $r_1$ and degree $d_1$ tensored with ${\mathcal O}_{{\mathbb P}^1}(1)$.
 We know that this family has the right dimension and the generic point is unobstructed. 
 
If $r_1=0$, $k$ is divisible by $\bar r$. We can then use Proposition \ref{torsext} instead of Proposition  \ref{prop:fam-ext} for the construction of one of the components.

We need to check that these are the only unobstructed components. 
From Lemma \ref{unobs}, an unobstructed component has one or two steps in the Harder-Narasimhan filtration.
From Lemma \ref{mustbeeltr}, the pieces we use to build extensions come from elementary transformations of pull backs of vector bundles on the curve $C$.
From Lemma \ref{mixed}, there is no need to do elementary transformation in the two step extensions.
Therefore, there are no other unobstructed components.
 \end{proof}


\section{Additional components}\label{secadcomp}
From equations (\ref{decP1}), (\ref {degree}) and Lemma \ref{unobs}, any additional components would come from families in which the restriction of the vector bundle to the generic ${\mathbb P}^1$ 
is direct sum of line bundles of at least three different degrees or line bundles of two degrees that differ in more than one unit.
We look at the second case first.
\medskip
 \begin{lem}
\label{prop:exttw}
Let $r \geq 2$ and assume that $E_1, E_2$ are generic semistable vector bundles of respective ranks $r_1, r_2$ and degrees $d_1, d_2$ with $r_{1}+r_{2}=r,$ $d_{1}+d_{2}=d,$ $\frac {d_1}{r_1}<  \frac {d_2}{r_2}$ and $\det E_1\otimes \det E_2 \cong L$.
On  $C \times \P^1$ consider an extension of the form
	\begin{equation}
	\label{eq:exttw}
		0 \to p_{1}^{\ast}E_1 \otimes p_{2}^{\ast}\cO_{\P^1}(a) \to \cE \to p_{1}^{\ast}E_2 \to 0
	\end{equation}
	
	\begin{itemize}
		\item[(i)]{For general $y \in \P^{1}$ the restriction $\cE|_{C \times y}$ is a stable bundle, and the degree of the associated rational curve in $M$ is $k=a[\bar dr_1-d_1\bar r].$}
		
		\item[(ii)]{If $a > 1,$ the family of rational curves on $M$ obtained from {\rm (\ref{eq:exttw})} by varying $E_1$ and $E_2$ is obstructed and has dimension 
		$$  \dim{M}+hk+(a-1)r_1r_2(g-1)+[r_1d_2-r_2d_1].$$}
	\end{itemize}
\end{lem}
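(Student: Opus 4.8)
The plan is to treat parts (i) and (ii) by the same machinery used in Propositions \ref{prop:fam-ext} and \ref{dimfamext}, adjusting only for the presence of the twist $\cO_{\P^1}(a)$. For (i), stability of $\cE|_{C \times y}$ for general $y$ is immediate from Lemma \ref{lem:stab}: for fixed $y \in \P^1$ the restriction of (\ref{eq:exttw}) is an extension $0 \to E_1 \to \cE|_{C\times y} \to E_2 \to 0$ with $\mu(E_1) < \mu(E_2)$ and $\det E_1 \otimes \det E_2 \cong L$, and the locus of non-stable such extensions has codimension at least $r_1r_2(g-1) \geq 2$, so a general line in the extension space (hence a general $y$ on such a line) gives a stable bundle. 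For the degree computation I would apply equation (\ref{degree}): here the relative Harder--Narasimhan filtration has two steps, $\cE_1 = p_1^\ast E_1 \otimes p_2^\ast \cO_{\P^1}(a)$ with generic splitting type $\cO_{\P^1}(a)^{r_1}$ and quotient $\cF_2 = p_1^\ast E_2$ with generic splitting type $\cO_{\P^1}(0)^{r_2}$, so $\alpha_1 = a$, $\alpha_2 = 0$. Since $\cF_1' = p_1^\ast E_1$ and $\cF_2' = p_1^\ast E_2$ are pullbacks from $C$, their $c_2$ vanish by Lemma \ref{pbfC}, and $\rank(\cE_1)\bar d - \deg(\cE_1)\bar r = r_1 \bar d - (d_1 + r_1 a)\bar r$; one checks $\deg(\cE_1) = d_1 + r_1 a$ because twisting by $p_2^\ast\cO_{\P^1}(a)$ adds $r_1 a$ to the degree along a $C$-fiber does not affect it, but the $c_1$ along the mixed classes does — more carefully, (\ref{degree}) gives $k = (r_1\bar d - d_1 \bar r)(a - 0) = a(\bar d r_1 - d_1 \bar r)$, which is the claimed value.

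For (ii), the strategy is to build the family exactly as in Proposition \ref{dimfamext}: take the étale covers $U_1 \to U_C(r_1,d_1)$, $U_2 \to U_C(r_2,d_2)$ with universal bundles, form $\cU_L \subset U_1 \times U_2$ cut out by the determinant condition (of dimension $r_1^2(g-1) + r_2^2(g-1) + 2 - g$), and over $\cU_L$ form the projective bundle $\pi : \mathbb{P}(\cR) \to \cU_L$ where $\cR = R^1 p_{1\ast}\cH om(p^\ast\cE_2, p^\ast\cE_1 \otimes p_2^\ast\cO_{\P^1}(a))$ — but now the relevant $\Ext^1$ over $C \times \P^1$ is $\Ext^1(p_1^\ast E_2, p_1^\ast E_1 \otimes p_2^\ast\cO_{\P^1}(a)) \cong H^1(E_2^\vee \otimes E_1) \otimes H^0(\cO_{\P^1}(a))$, which has dimension $[r_1 r_2(g-1) + r_1 d_2 - r_2 d_1](a+1)$ by Künneth (the $H^0$ factor contributes $a+1$, and $H^1(\cO_{\P^1}(a)) = 0$ so there is no second term). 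As in Proposition \ref{dimfamext}, a general extension is stable off a codimension-$\geq 2$ locus, so general lines in $\mathbb{P}(\cR)$ lie in the stable locus and yield honest rational curves $\P^1 \to M$. The dimension of the family is then
\begin{equation*}
\dim \cU_L + \dim \mathbb{P}(\cR) + \dim \Aut(\P^1) = \bigl(r_1^2 + r_2^2\bigr)(g-1) + 2 - g + (a+1)\bigl[r_1r_2(g-1) + r_1d_2 - r_2d_1\bigr] - 1 + 3,
\end{equation*}
and I would then simplify this using $r_1^2 + r_2^2 + 2r_1r_2 = r^2$ and $hk = a(\bar d r_1 - d_1 \bar r)h = a(dr_1 - d d_1/\ldots)$ — more precisely $hk = a(r_1 d_2 - r_2 d_1)$ — to reduce the right-hand side to $\dim M + hk + (a-1)r_1r_2(g-1) + [r_1 d_2 - r_2 d_1]$, matching the claim. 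Finally, obstructedness: by Lemma \ref{unobs} a component is unobstructed only if the generic-fiber splitting type has consecutive exponents differing by at most one, but here that splitting type is $\cO_{\P^1}(a)^{r_1} \oplus \cO_{\P^1}(0)^{r_2}$ with $a > 1$, so $H^1(\P^1, f^\ast T_M) \neq 0$ and every member of the family is obstructed.

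The routine part is the arithmetic bookkeeping reducing the dimension expression to the stated form; the one genuine point requiring care is the Künneth computation of the rank of the bundle $\cR$ on $\cU_L$ — in particular confirming that the extension space over $C \times \P^1$ really is $\Ext^1_C(E_2,E_1) \otimes H^0(\P^1, \cO_{\P^1}(a))$ with no correction terms, which follows because $p_1^\ast E_2$ and $p_1^\ast E_1 \otimes p_2^\ast\cO(a)$ are pulled back along the two projections from a product, so $\cH om$ factors as an external tensor product and the local-to-global spectral sequence for $\Ext$ collapses since $\Hom_C(E_2, E_1) = 0$ by Lemma \ref{lem:dimh0} and $H^1(\P^1,\cO(a)) = 0$. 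This external-tensor-product identification is also what guarantees the family is flat over the parameter space and that Grauert's theorem applies, so it is the step I would write out most carefully; everything else parallels the already-established Propositions \ref{prop:fam-ext}, \ref{dimfamext}, and \ref{torsext}.
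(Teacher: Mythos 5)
Your overall strategy is the paper's: stability and the degree as in Propositions \ref{prop:fam-ext} and \ref{dimfamext}, the K\"unneth identification $\Ext^{1}_{C\times\P^{1}}(p_{1}^{\ast}E_{2},p_{1}^{\ast}E_{1}\otimes p_{2}^{\ast}\cO_{\P^1}(a))\cong H^{1}(C,E_{2}^{\vee}\otimes E_{1})\otimes H^{0}(\P^{1},\cO_{\P^1}(a))$ (valid since $H^{0}(E_{2}^{\vee}\otimes E_{1})=0$ by Lemma \ref{lem:dimh0} and $H^{1}(\P^{1},\cO(a))=0$), and obstructedness from Lemma \ref{unobs} via the splitting type $\cO_{\P^1}(a)^{r_{1}}\oplus\cO_{\P^1}^{r_{2}}$. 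All of that is fine, as is the degree computation in (i).

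There is, however, a genuine error in your dimension count for (ii): the extra summand $\dim\Aut(\P^{1})=3$. A point of $\mathbb{P}(\cR)$ is already an extension class on $C\times\P^{1}$, and such a class determines a \emph{parametrized} map $\P^{1}\to M$, namely $y\mapsto[\cE|_{C\times y}]$; reparametrizing by $\phi\in\Aut(\P^{1})$ just replaces the class $\sum e_{i}\otimes s_{i}$ by $\sum e_{i}\otimes\phi^{\ast}s_{i}$, which is another point of the same fiber of $\mathbb{P}(\cR)\to\cU_{L}$. Moreover the map $\mathbb{P}(\cR)\to{\rm Mor}_{k}(\P^{1},M)$ is generically injective (the subbundle $E_{1}\subset\cE|_{C\times y}$ is recovered from the relative Harder--Narasimhan filtration, and the extension class is then determined up to scalar because $\Hom(p_{1}^{\ast}E_{2},p_{1}^{\ast}E_{1}\otimes p_{2}^{\ast}\cO(a))=0$). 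So the dimension of the family is $\dim\cU_{L}+(a+1)[r_{1}r_{2}(g-1)+r_{1}d_{2}-r_{2}d_{1}]-1$, with no further correction; your expression exceeds this by $3$ and does \emph{not} simplify to the stated formula, contrary to your closing claim. (In the $a=1$ case of Proposition \ref{dimfamext} the count ``Grassmannian of lines $+\ \Aut(\P^{1})$'' equals $2N'-4+3=2N'-1=\dim\P(\Ext^{1}_{C\times\P^{1}})$, where $N'=\dim\Ext^{1}_{C}(E_{2},E_{1})$; adding $3$ on top of the full projectivized relative $\Ext^{1}$, as you do, double counts the reparametrization.) The discrepancy is not cosmetic: this dimension is compared against the expected dimension $\dim M+2hk$ in Theorem \ref{excomp}, so an extra $+3$ would shift the divisibility threshold there.
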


\begin{proof} 
	The stability of the general extension is proved as in Proposition \ref{prop:fam-ext}, while the degree computation follows from equation (\ref{degree}); this proves (i). 
	 We now turn to (ii), whose proof is as in Proposition \ref{dimfamext}.
 	
	Using again ${\mathcal U}_{L}$ for the space of pairs of bundles with determinant of the product the given $L$,
	consider the projective bundle over ${\cU}_{L}$
	$$\mathbb{P}_a:= {\mathbb P}(R^{1}{\pi}_{\ast}{\cH}om((\pi_{2} \circ p_1\times p_2)^{\ast}{\mathcal E}_2,
  	(\pi_{1} \circ p_1\times p_2)^{\ast}{\mathcal E}_1\otimes   p_3^*{\mathcal O}_{{\mathbb P}^1}(a))).$$
	There is a canonical extension on $\mathbb{P}_a\times C\times {\mathbb P}^1 $
	\[ 0\to ((\pi_{2} \circ p_1)\times p_2)^{\ast} {\mathcal E}_1\otimes  p_3^*{\mathcal O}_{{\mathbb P}^1}(a)\to 
	{\mathcal E}\to( (\pi_{2} \circ p_1)\times p_2)^{\ast} {\mathcal E}_2\to 0\]
	The restriction of ${\mathcal E}$ to the fiber over  a point of ${\mathcal U}_L$ gives a vector bundle on $C\times {\mathbb P}^1 $ and therefore,
	a rational curve in $M$.  As the point moves in  ${\mathcal U}_L$, we obtain a family of rational curves in $M$.

	The restriction of ${\mathcal E}$  to the fiber over  a point of ${\mathcal U}_{L}\times C$  is of the form
 	${\mathcal O}_{{\mathbb P}^1}(a)^{r_1}\oplus {\mathcal O}_{{\mathbb P}^1}^{r_2}$.
	Hence, from Lemma \ref{unobs}, if $a>1$, the family we are constructing is obstructed.

	By Lemma \ref{lem:stab} the general  extension  is stable. 
	From the correspondence between vector bundles on $C\times {\mathbb P}^1$ and rational curves in $M$, $\mathbb{P}_a$ gives rise to a family of such maps. 
	$$  \dim (\mathbb{P}_a )=\dim(U_{\cL})+\dim ( \mathbb{P}({\rm Ext}^{1}(E_{2},E_{1}(a))))=  $$
	$$  r_1^2(g-1)+1+r_2^2(g-1)+1-g+(a+1)[r_1r_2(g-1)+r_1d_2-r_2d_1]-1=$$
	$$ =(r^2-1)(g-1)+(a-1)r_1r_2(g-1)+ah(r_1\bar d-\bar r d_1)  +[r_1d_2-r_2d_1]=$$
	$$=\dim(M)+hk+(a-1)r_1r_2(g-1)+[r_1d_2-r_2d_1]$$
\end{proof}  

We now consider families of extensions of the form
\begin{equation}\label{extskysctw}
0 \to p_{1}^{\ast}E' \otimes p_{2}^{\ast}\cO_{\P^1}(a) \to \cE \to p_{1}^{\ast} {\mathcal O}_D\to 0
\end{equation}
for a divisor $D$ on $C$ of degree $t$.

  \begin{lem} \label{torsexttw}
  There is a family of maps from rational curves to $M$ as described  in (\ref{extskysctw}) with varying $E', D$. 
The family is parametrized by an extension space over the space of pairs of vector bundles and divisors of fixed degree with  fixed product determinant
and has  dimension 
$$\dim M+hk+r\deg D,\  k=a\bar r\deg D$$ 
For $a>1$, the family  is obstructed and is not a component of the space of maps from ${\mathbb P}^1$ to $M$.
 \end{lem}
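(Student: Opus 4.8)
The plan is to copy, almost verbatim, the construction from the proof of Proposition~\ref{torsext}, carrying the extra twist $\cO_{\P^{1}}(a)$ through the extension, and then to compare the dimension of the resulting family with the expected dimension. First I would assemble the parameter space: let $\mathcal{H}_{t}=\mathrm{Sym}^{t}C$ be the Hilbert scheme of degree-$t$ divisors on $C$ with universal divisor $\mathcal{D}\subset\mathcal{H}_{t}\times C$, let $U$ be an \'etale cover of $U_{C}(r,d-t)$ carrying a Poincar\'e bundle $\mathcal{P}$ on $U\times C$, and put
\[
 S=\{(D,E)\in\mathcal{H}_{t}\times U:\ (\wedge^{r}E)(D)\cong L\},
\]
a fiber of a surjection to $\mathrm{Pic}^{d}(C)$, so that $\dim S=r^{2}(g-1)+1+t-g=(r^{2}-1)(g-1)+t$. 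Over $S\times C\times\P^{1}$ (with projections $p_{1},p_{2},p_{3}$, and $\pi_{1},\pi_{2}$ the two projections of $S$) I would form the projective bundle of extensions of $((\pi_{1}\circ p_{1})\times p_{2})^{\ast}\cO_{\mathcal{D}}$ by $((\pi_{2}\circ p_{1})\times p_{2})^{\ast}\mathcal{P}\otimes p_{3}^{\ast}\cO_{\P^{1}}(a)$, whose universal sequence is the family version of \eqref{extskysctw}.

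Second I would check that a general member of this family produces a rational curve in $M$ of the asserted degree. Restricting \eqref{extskysctw} to a horizontal slice $C\times\{y\}$ gives $0\to E'\to\mathcal{E}_{y}\to\cO_{D}\to 0$, so $\mathcal{E}_{y}$ arises from the generic rank-$r$, degree-$(d-t)$ bundle $E'$ by $\deg D$ successive inverse elementary transformations; by Lemma~\ref{stabtorext}, read in the reversed direction recorded in its proof, the generic $\mathcal{E}_{y}$ is stable, so a general point of the family defines a morphism $\P^{1}\to M$. For the degree I would use \eqref{degree}: over a generic $P\notin D$ the vertical slice $\mathcal{E}|_{\{P\}\times\P^{1}}$ equals $E'_{P}\otimes\cO_{\P^{1}}(a)=\cO_{\P^{1}}(a)^{r}$, so the relative Harder--Narasimhan filtration of $\mathcal{E}$ with respect to $p_{1}$ has a single step, with $\alpha_{1}=a$ and $\mathcal{F}_{1}'=\mathcal{E}\otimes p_{2}^{\ast}\cO_{\P^{1}}(-a)$; tensoring \eqref{extskysctw} by $p_{2}^{\ast}\cO_{\P^{1}}(-a)$ and applying the Whitney formula gives $c_{2}(\mathcal{F}_{1}')=a\deg D$, hence $k=\bar{r}\,c_{2}(\mathcal{F}_{1}')=a\bar{r}\deg D$.

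Third I would count the dimension exactly as in Proposition~\ref{torsext}. Writing $D=\sum_{i=1}^{t}P_{i}$ with distinct points, each $\{P_{i}\}\times\P^{1}=p_{1}^{-1}(P_{i})$ is a divisor with trivial normal bundle, so
\[
 \mathrm{Ext}^{1}_{C\times\P^{1}}\bigl(p_{1}^{\ast}\cO_{D},\,p_{1}^{\ast}E'\otimes p_{2}^{\ast}\cO_{\P^{1}}(a)\bigr)\ \cong\ \bigoplus_{i=1}^{t}H^{0}\bigl(\P^{1},\,E'_{P_{i}}\otimes\cO_{\P^{1}}(a)\bigr),
\]
of dimension $tr(a+1)$. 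An extension is a collection of $t$ extensions, one over each $P_{i}$, and rescaling each of them does not change $\mathcal{E}$; so over $S$ the family is parametrized by a space of dimension $tr(a+1)-t$, and its total dimension is
\[
 (r^{2}-1)(g-1)+t+tr(a+1)-t=\dim M+rt(a+1)=\dim M+hk+r\deg D,
\]
using $hk=h\bar{r}at=rat$. For $a=1$ this recovers Proposition~\ref{torsext}.

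Finally, for $a>1$ we have $\dim M+hk+r\deg D=\dim M+rat+rt<\dim M+2rat=\dim M+2hk$, strictly below the expected dimension. Since by the Lemma following Definition~\ref{defdeg} the expected dimension is the minimum possible dimension of any component of $\mathrm{Mor}_{k}(\P^{1},M)$, this family cannot be dense in a component; it is properly contained in a larger one, which I would identify with the unobstructed component of degree $k$ built from torsion extensions (Proposition~\ref{torsext}), whose general member has the same generic vertical splitting type up to twist. The step I expect to require the most care is this last part: one must match the relative Harder--Narasimhan data so as to embed the present family into that unobstructed component, and at the same time record --- via Lemma~\ref{unobs} and the shape of the vertical slices, in the spirit of Lemma~\ref{prop:exttw}(ii) --- the precise obstructedness assertion together with the fact that the bundles swept out fill only a proper subvariety of $M$, which is the conclusion feeding into Theorem~\ref{thoth}.
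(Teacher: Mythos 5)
Your proposal matches the paper's proof essentially verbatim: the same parameter space $S$ and projective extension bundle $\mathbb{P}_a$ over it, stability via Lemma \ref{stabtorext}, degree $a\bar r\deg D$ from (\ref{degree}), the dimension count $\dim S+(a+1)tr-t=\dim M+hk+r\deg D$, and the observation that reaching the expected dimension would force $tr\ge atr$, i.e.\ $a\le 1$. The one point you explicitly defer---pinning down the obstructedness assertion given that the generic vertical slice $\cO_{\P^1}(a)^{\oplus r}$ is balanced---is likewise not argued in the paper's proof, so nothing is missing relative to it.
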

\begin{proof}
The proof is as in Proposition \ref{torsext}, replacing ${\mathbb P}$ by 
$${\mathbb P}_a={\mathbb P}(Ext^1(( (\pi_1\circ p_{1})\times p_2)^{\ast}({\mathcal O}_{\mathcal D}),
 ((\pi_2\circ p_1)\times p_2)^{\ast}( {\mathcal P})\otimes p_3^*({\mathcal O}_{{\mathbb P}^1})(a)))$$  
Lemma \ref{stabtorext}, can still be applied to prove the stability of the generic extension.

From  (\ref{degree}), the generic point in ${\mathbb P}_a$ parameterizes rational curves in $M$ of degree $a\bar rt.$

The dimension of the family is 
$$\dim S+Ext^1({\mathcal O}_D, E\otimes {\mathcal O}_{{\mathbb P}^1}(a))-t=
(r^{2}-1)(g-1)+t+(a+1)tr-t
 =\dim M+hk+tr$$
 For this family to be a component of the space of rational maps to $M$, it would have to have dimension at least the expected dimension.
 This would imply that $tr\ge atr$ which in turn implies $a\le 1$.
 \end{proof}

\begin{thm} 
\label{excomp} 
For $a>1$, if the family described in Proposition \ref{prop:exttw} is an (obstructed) component of the space of maps from ${\mathbb P}^1$ to $M$ of degree $k=a(r_1d-rd_1)$,
 then a vector bundle in the image rational curve in $M$ is not generic except when $k$ is divisible by $(g-1)r_1(r-r_1)$ for some $r_1, 1\le r_1\le r-1$.
 \end{thm}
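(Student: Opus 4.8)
The plan is to compare the dimension of the family produced in Lemma~\ref{prop:exttw}(ii) with the expected dimension $\dim M + 2hk$, and then to read off a bound on an appropriate Segre invariant of the bundles occurring along its rational curves. Set $\delta := r_1d_2 - r_2d_1 = r_1d - rd_1 = \begin{vmatrix} r_1 & r \\ d_1 & d \end{vmatrix}$; since $\mu(E_1) < \mu(E_2)$ this is a positive integer, and by Lemma~\ref{prop:exttw}(i) one has $hk = a\delta$. Substituting $hk = a\delta$ (and $r_1d_2-r_2d_1 = \delta$) into the dimension formula of Lemma~\ref{prop:exttw}(ii), the family has dimension
\[
\dim M + 2hk + (a-1)\bigl(r_1(r-r_1)(g-1) - \delta\bigr).
\]
If it is a component of ${\rm Mor}_k(\P^1,M)$, then its dimension is at least the expected dimension $\dim M + 2hk$, whence --- since $a > 1$ --- we obtain $\delta \le r_1(r-r_1)(g-1)$.

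Next I would use the universal extension (\ref{eq:exttw}) directly. For any $y \in \P^1$ the restriction of (\ref{eq:exttw}) to $C \times \{y\}$ remains exact, because the quotient $p_1^{\ast}E_2$ is locally free; hence every bundle $E_y := \cE|_{C\times\{y\}}$ lying on the image rational curve sits in an exact sequence $0 \to E_1 \to E_y \to E_2 \to 0$ with $E_1$ a subbundle of rank $r_1$ and degree $d_1$, so that $s_{r_1}(E_y) \le \delta$. Suppose first that $\delta < r_1(r-r_1)(g-1)$. Then $s_{r_1}(E_y) \le \delta < r_1(r-r_1)(g-1)$ for all $y$, and since a generic stable bundle of rank $r$ and degree $d$ has $r_1$-Segre invariant at least $r_1(r-r_1)(g-1)$ (the Lange--Hirschowitz bounds recalled after the definition of the Segre invariant), each $E_y$ belongs to $\bigcup_{0 < s < r_1(r-r_1)(g-1)} {\rm S}_{(r_1,s)}(r,d)$, which is a proper closed subvariety of $M$ by Proposition~\ref{Lange}. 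Hence no bundle on the image curve is a generic bundle of $M$, which is the assertion of the theorem in this case.

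It remains to treat the borderline case $\delta = r_1(r-r_1)(g-1)$. Here equality holds in the dimension count above, so the family has dimension exactly equal to the expected dimension $\dim M + 2hk$, while being obstructed because $a > 1$ (Lemma~\ref{unobs}). Under our standing hypothesis it is then an obstructed component of ${\rm Mor}_k(\P^1,M)$ of the expected dimension, and Theorem~\ref{thext} forces $k$ to be divisible by $(g-1)r_1'(r-r_1')$ for some $r_1' \in \{1,\dots,r-1\}$ --- exactly the exceptional case excluded in the statement. Combining the two cases: whenever the family is a component and $k$ is not divisible by any $(g-1)r_1'(r-r_1')$, we are necessarily in the strict case and every bundle on the image curve is non-generic. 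I expect this borderline case to be the only real obstacle: one must know that such a family can actually be a component only when the exceptional divisibility holds for some $r_1'$ --- possibly different from the $r_1$ used in the construction --- and this is precisely the information packaged in Theorem~\ref{thext}, equivalently in the Diophantine bookkeeping behind its degree; the strict case, by contrast, is immediate from the Segre bound once the subbundle $E_1 \subset E_y$ has been identified.
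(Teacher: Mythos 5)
Your argument follows the paper's proof almost line for line: the same dimension count (rewritten as $\dim M + 2hk + (a-1)\bigl(r_1(r-r_1)(g-1)-\delta\bigr)$ with $\delta = r_1d-rd_1$) forces $\delta \le r_1(r-r_1)(g-1)$ once the family is assumed to be a component, and in the strict case the subbundle $E_1 \subset E_y$ of rank $r_1$ and degree $d_1$ gives $s_{r_1}(E_y) \le \delta < r_1(r-r_1)(g-1)$, so by Proposition~\ref{Lange} every bundle on the image curve lies in a proper Segre stratum and is therefore not generic. This is exactly what the paper does.

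The one step you should repair is the borderline case $\delta = r_1(r-r_1)(g-1)$. There you appeal to Theorem~\ref{thext}, but the ``only if'' direction of that theorem is itself established by the analysis in the present theorem (together with Lemma~\ref{torsexttw} and Theorem~\ref{comp2se}), so the appeal is circular. It is also unnecessary: the degree formula $hk = a\delta$ from Lemma~\ref{prop:exttw}(i) immediately gives $hk = a\,r_1(r-r_1)(g-1)$, which is precisely the divisibility excepted in the statement --- and this direct observation is all the paper records at that point. With that one-line substitution your proof coincides with the paper's.
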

\begin{proof}
We first show that the dimension we found in Proposition \ref{prop:exttw} is larger than the expected dimension $\dim M +2hk$ if and only if $r_1d-rd_1\le r_1(r-r_1)(g-1)$.
As the expected dimension is the smallest dimension a component of the space of maps can have, this will suffice to prove that the family is not a component of the space of maps 
if $r_1d-rd_1> r_1(r-r_1)(g-1)$.
We will only need to deal with the case when that inequality is an equality.
The condition on the dimension can be written as 
$$\dim(M)+hk+(a-1)r_1r_2(g-1)+[r_1d_2-r_2d_1] \ge  \dim(M)+2hk$$
Using that $hk=a(r_1 d- r d_1)$, this is equivalent to 
$$(a-1)r_1(r-r_1)(g-1)+(r_1 d - rd_1)\ge a (r_1 d - rd_1) $$
As we are assuming $a>1$, the inequality is preserved by dividing by $a-1$. We obtain the equivalent equation 
	\begin{equation}
		\label{eq:almost-nice}
 		r_1(r-r_1)(g-1)\ge r_1d-rd_1
	\end{equation}
When the inequality is strict, this implies (see equation  (\ref{inclLanloc})) that the corresponding vector bundle is special. 
When the inequality is an equality, we obtain the special case in which $k$ is divisible by $(g-1)r_1(r-r_1)$.
\end{proof}

\begin{rem}
	When $r=2$ and $d=1,$ equality in (\ref{eq:almost-nice}) implies that $g$ is even, so that we recover the full statement of Theorem 1.6 in \cite{C}.
\end{rem}

\begin{cor} 
\label{excompr=2} 
If $r=2, a>1$,  the family described in Proposition \ref{prop:exttw} is an (obstructed) component of the space of maps from ${\mathbb P}^1$ to $M$ of degree $k=a(d-2d_1)$
 if and only if $d-2d_1<g-1$, or equivalently, when the vector bundle in the image rational curve in $M$ is not generic.
 \end{cor}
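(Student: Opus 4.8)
The plan is to specialize Theorem \ref{excomp} to the case $r=2$ and translate the divisibility condition there into the simple numerical inequality stated here. First I would observe that when $r=2$ the only admissible value of the rank parameter is $r_1=1$ (since we need $1\le r_1\le r-1$), so $r_2=1$ as well. With these values the quantity $r_1(r-r_1)(g-1)$ from (\ref{eq:almost-nice}) becomes simply $g-1$, and the quantity $r_1d-rd_1$ becomes $d-2d_1$. Thus inequality (\ref{eq:almost-nice}), which Theorem \ref{excomp} showed is equivalent to the family being a component (i.e. having dimension at least the expected dimension), reads $g-1\ge d-2d_1$.

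Next I would address the two cases in (\ref{eq:almost-nice}) separately, as in the proof of Theorem \ref{excomp}. When $d-2d_1<g-1$ strictly, the inclusion (\ref{inclLanloc}) from Proposition \ref{Lange} applies: the Segre stratum $\mathrm{S}_{(1,d-2d_1)}(2,d)$ is properly contained in the closure of the next stratum, hence is a proper closed subvariety of $U(2,d)$, so the vector bundle parametrized by a generic point of the image rational curve is not generic. Conversely, when $d-2d_1=g-1$ we would be in the borderline ``almost nice'' situation; here one should check, using Lemma \ref{lem:stab} together with the dimension count in Proposition \ref{prop:exttw}, that the image fills all of $M$, i.e. the generic point of the rational curve is a generic stable bundle. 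The equivalence ``is a component $\iff$ $d-2d_1<g-1$ or $=g-1$, i.e. $d-2d_1\le g-1$'' then needs to be combined with the sharper ``$\iff$ $d-2d_1<g-1$'' claimed in the statement; I would reconcile this by noting that the case $d-2d_1=g-1$ is exactly the unobstructed/additional-component case already accounted for in Theorem \ref{thext} (it is the $r_1=r-r_1=1$ instance, where $k$ is divisible by $r_1(r-r_1)(g-1)=g-1$), so among the genuinely new obstructed families produced by Proposition \ref{prop:exttw} with $a>1$, the component condition is precisely $d-2d_1<g-1$.

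The main obstacle I anticipate is the borderline case $d-2d_1 = g-1$: one must verify carefully that here the family, although it has the expected dimension, either coincides with (or is dominated by) a construction already described, and that its generic member sweeps out all of $M$ rather than a proper subvariety. This requires knowing that a generic extension as in (\ref{eq:exttw}) with $E_1,E_2$ generic of the relevant ranks and degrees — and with the Segre invariant at its maximal generic value $g-1$ — is in fact a generic stable bundle of rank $2$ and determinant $L$, which follows from the last line of Proposition \ref{Lange} (the stratum equals all of $U(r,d)$ when $s\ge r'(r-r')(g-1)$) combined with Lemma \ref{lem:stab}. Once that is in place, the stated equivalence — family is an obstructed component $\iff$ $d-2d_1<g-1$ $\iff$ generic bundle on the image curve is non-generic — is immediate from Theorem \ref{excomp}.
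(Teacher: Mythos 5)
Your proposal handles only the easy half of the equivalence. The translation of Theorem \ref{excomp} to $r=2$ (so $r_1=r_2=1$, $r_1(r-r_1)(g-1)=g-1$, $r_1d-rd_1=d-2d_1$) and the observation that the strict inequality $d-2d_1<g-1$ forces the bundles into a proper Segre stratum via (\ref{inclLanloc}) are fine, and they give the ``only if'' direction --- which is why the paper dismisses that direction in one line (``has already been proved''). The genuine content of the Corollary, and the part your proposal skips entirely, is the ``if'' direction: you must show that when $d-2d_1<g-1$ the family of Proposition \ref{prop:exttw} actually \emph{is} an irreducible component, i.e.\ that it is not contained in the closure of some larger family of degree-$k$ rational curves. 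Your argument implicitly treats ``dimension $\geq$ expected dimension'' as equivalent to ``is a component,'' but that implication is false: a family of more-than-expected dimension can still sit inside the closure of an even larger family with a different extension structure or a different generic splitting type on $\P^1$.

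The paper closes exactly this gap. It identifies the two ways the family could degenerate into a larger one: either $d_1$ decreases (moving to a deeper stratum of the Segre stratification, by (\ref{inclLanloc})), or the splitting type $\cO_{\P^1}(a)^{r_1}\oplus\cO_{\P^1}^{r_2}$ rebalances, which after normalization decreases $a$. Since the degree $k=a(d-2d_1)$ is fixed on the Hilbert scheme, $d_1$ becomes a function of $a$, and the dimension formula of Proposition \ref{prop:exttw} is then an increasing function of $a$; hence the family for one value of $a$ cannot lie in the closure of the family for another value. Finally, rank $2$ is used to exclude degenerations to splitting types with three or more distinct summands, which could occur for higher rank. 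Without some version of this closure argument your proof does not establish the ``if'' direction, so the proposal as written has a real gap. (Separately, your aside that the borderline case $d-2d_1=g-1$ is ``unobstructed'' is a slip --- Theorem \ref{thext} classifies it as an \emph{obstructed} component of the expected dimension --- but that is peripheral to the main missing step.)
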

\begin{proof}
The only-if part has already been proved. 
It remains to show that, under the given conditions, the family is actually a component of the space of rational curves. 
For this, it suffices to show that it is not in the closure of a larger family of such curves.

Assume first that $r$ is general and that $r_1d-rd_1<r_1(r-r_1)(g-1)$ as in (\ref{excomp}).
By construction of the family in (\ref{eq:exttw}), a point in the image rational curve in $M$ is a
vector bundle on $C$ which is an extension of a vector bundle of rank $r_2$ and degree $d_2$   by a vector bundle of rank $r_1$ and degree $d_1$.
From the inclusion in equation (\ref{inclLanloc}), if this family is contained in a larger component, the $d_1$ should decrease.
On the other hand, a family of vector bundles on ${\mathbb P}^1$ of the form ${\mathcal O}_{{\mathbb P}^1}(a)^{r_1}\oplus {\mathcal O}_{{\mathbb P}^1}^{r_2}$
can be deformed by moving some of the degree of  ${\mathcal O}_{{\mathbb P}^1}(a)$ to some of the  ${\mathcal O}_{{\mathbb P}^1}$.
As we can normalize by tensoring with a line bundle on ${\mathbb P}^1$, this has the effect of decreasing the $a$.
If we assume that the $k$  stays constant, then $ad_1$ is constant and therefore, $d_1$ can be written in terms of $a$.
 Writing the dimension in Proposition \ref{prop:exttw} as a function of $a$ alone, we notice that it is an increasing function of $a$.
 Hence, the family corresponding to a value of $a$ cannot be in the closure of the family corresponding to a different value.
 While for arbitrary $r$ one could deform the family to a family in which the decomposition of the bundle to the rational curve has more summands, this cannot happen for rank two. 
 This concludes the proof in this case.
\end{proof}

 Consider now vector bundles on  $C\times {\mathbb P}^1$ whose restriction to the generic ${\mathbb P}^1$ is direct sum of line bundles of at least three different degrees. 
 Up to tensoring with a line bundle on ${\mathbb P}^1$, we can assume that one of the summands is trivial, 
 $${\mathcal O}_{{\mathbb P}^1}(a_1+\dots +a_{l-1})^{r_1}\oplus {\mathcal O}_{{\mathbb P}^1}(a_2+\dots +a_{l-1})^{r_2}\oplus \dots  
 \oplus {\mathcal O}_{{\mathbb P}^1}(a_{l-1})^{r_{l-1}}\oplus {\mathcal O}_{{\mathbb P}^1}^{r_l}$$

We would be considering extensions of the form 
 
\begin{equation}
	\label{ext3}
	\begin{matrix} 
		0 \to p_{1}^{\ast}E_1 \otimes p_{2}^{\ast}\cO_{\P^1}(a_1) \to{ \mathcal E }_2' \to p_{1}^{\ast}E_2 \to 0, & 
		0 \to {\mathcal E}_2 ' \otimes p_{2}^{\ast}\cO_{\P^1}(a_2) \to {\mathcal E}_3 '   \to p_{1}^{\ast}E_3 \to 0 \\
		\dots \ \ \ ,& \   0 \to {\mathcal E}_{l-1} ' \otimes p_{2}^{\ast}\cO_{\P^1}(a_{l-1}) \to {\mathcal E}  \to p_{1}^{\ast}E_l \to 0
	\end{matrix}
\end{equation}
  
We can obtain a family of such extensions by considering successive spaces of extensions similarly to the construction in \ref{dimfamext}:
  
     \begin{lem}
\label{fam2se}
Given positive integers $a_1,\dots, a_{l-1}, r_1,\dots, r_l$, arbitrary integers $d_1,\dots, d_l$ with 
$$r_1+\dots+r_l=r, \ d_1+\dots+d_l=d, \  \frac{d_1}{r_1}<\frac{d_2}{r_2}<\dots <\frac{d_l}{r_l}$$
there is a family of maps from rational curves to $M$ whose generic restriction to ${\mathbb P}^1$
is $${\mathcal O}_{{\mathbb P}^1}(a_1+\dots +a_{l-1})^{r_1}\oplus{\mathcal O}_{{\mathbb P}^1}(a_2+\dots +a_{l-1})^{r_2}\oplus \dots
\oplus {\mathcal O}_{{\mathbb P}^1}(a_{l-1})^{r_{l-1}}\oplus {\mathcal O}_{{\mathbb P}^1}^{r_l}$$
The degree is obtained from  
  $$hk=\sum_{i<j}(r_id_j-r_jd_i)(a_i+a_{i+1}+\dots+a_{j-1})$$
The family is obstructed if $l\ge 3$ and has dimension 
$$\dim M+\sum_{i<j}(r_id_j-r_jd_i)(a_i+a_{i+1}+\dots +a_{j-1}+1)+[\sum_{i<j}r_ir_j(a_i+a_{i+1}+\dots +a_{j-1}-1)](g-1)$$ 
\end{lem}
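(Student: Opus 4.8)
The plan is to set up an iterated projective bundle of extensions over the space $\cU_L$ of tuples of bundles with fixed product determinant, exactly mirroring the construction in Proposition \ref{dimfamext} and Lemma \ref{prop:exttw}, and then carry out the dimension count. First I would fix étale covers $U_i$ of $U_C(r_i,d_i)$ carrying universal bundles $\cE_i$ (using \cite{NR} Prop.~2.4 as in Proposition \ref{dimfamext}), form $\cU_L := \{(E_1,\dots,E_l) \in \prod U_i : \bigotimes_i \det E_i \cong L\}$, which is a fiber of a surjection to $\Pic^d(C)$ and hence has dimension $\sum_i (r_i^2(g-1)+1) - g$. Then I would build the tower of extension spaces: having constructed $\cE'_j$ on (a projective bundle over $\cU_L$) $\times\, C \times \P^1$, I twist by $p_2^*\cO_{\P^1}(a_j)$ and take $\mathbb{P}(R^1{p}_*\cH om(p^*E_{j+1}, \cE'_j \otimes p_2^*\cO_{\P^1}(a_j)))$ to get $\cE'_{j+1}$. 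The vanishing $h^0(\cH om(E_{j+1}, \cE'_j\otimes\cO(a_j)))=0$ needed for Grauert's theorem follows from Lemma \ref{lem:dimh0} applied on each fiber of $C$, since on each fiber $\cE'_j$ is (after the twist) an iterated extension of semistable bundles all of slope strictly less than $\mu(E_{j+1})=d_{j+1}/r_{j+1}$, using the slope hypothesis $d_1/r_1 < \dots < d_l/r_l$. At the end, the restriction of $\cE$ to any fiber over $\cU_L$ is a bundle on $C\times\P^1$ whose restriction to each $\{P\}\times\P^1$ is $\bigoplus_i \cO_{\P^1}(a_i+\dots+a_{l-1})^{r_i}$, so it defines a rational curve in $M$; Lemma \ref{unobs} then gives obstructedness as soon as three distinct degrees occur, i.e.~$l\ge 3$ (for $l=2$ with $a_1=1$ one is back in the unobstructed case).

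For stability of the generic fiber I would argue by induction on the number of extension steps, invoking Lemma \ref{lem:stab} at each stage: at step $j$, $\cE'_j|_{C\times y}$ is a generic extension $0\to \cE'_{j-1}|_{C\times y} \to \cE'_j|_{C\times y} \to E_j \to 0$ of generic stable bundles of strictly increasing slope (the previous piece being stable by induction, after the twist it still has smaller slope than $E_j$), hence stable outside a locus of codimension $\ge r_{j-1}'r_j(g-1)\ge 2$ where $r'_{j-1}=r_1+\dots+r_{j-1}$. Since the bad locus in each successive $\mathbb{P}$ has codimension $\ge 2$, there remain whole rational lines of stable bundles, so the construction genuinely produces maps $\P^1\to M$.

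The degree computation comes directly from equation (\ref{degree}): with the relative Harder--Narasimhan filtration $\cE_j = $ (the image of) the tower, one has $\cF_i = p_1^*E_i$ up to twist so each $c_2(\cF'_i)=0$ by Lemma \ref{pbfC}, and $\rank(\cE_i)\bar d - \deg(\cE_i)\bar r = \frac1h\sum_{j\le i < m}(r_i' \text{-type terms})$; telescoping $\sum_{i}(\text{rk}\,\cE_i\,\bar d - \deg\cE_i\,\bar r)(\alpha_i-\alpha_{i+1})$ with $\alpha_i - \alpha_{i+1} = a_i$ and $\alpha_i = a_i+\dots+a_{l-1}$ gives $hk = \sum_{i<j}(r_id_j - r_jd_i)(a_i+\dots+a_{j-1})$. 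For the dimension, I add $\dim \cU_L$ to $\sum_{j=1}^{l-1}\dim\mathbb{P}(\Ext^1(E_{j+1}, \cE'_j\otimes\cO(a_j)))$, where by Riemann--Roch $\dim\Ext^1(E_{j+1},\cE'_j\otimes\cO(a_j)) = \sum_{i\le j}\big[r_ir_{j+1}(g-1)(a_i+\dots+a_j+1) + (r_{j+1}d_i - r_id_{j+1})(a_i+\dots+a_j)\big]$ summed appropriately; collecting all terms and using $d=\sum d_i$, $r=\sum r_i$, $\sum r_i^2(g-1)+l-g = (r^2-1)(g-1) - 2\sum_{i<j}r_ir_j(g-1) + l - 1$ reorganizes into $\dim M + \sum_{i<j}(r_id_j - r_jd_i)(a_i+\dots+a_{j-1}+1) + \big[\sum_{i<j}r_ir_j(a_i+\dots+a_{j-1}-1)\big](g-1)$. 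The main obstacle is this bookkeeping: keeping track of which pairs $(i,j)$ contribute at which extension step and verifying the index gymnastics collapses to the clean symmetric formula — the geometry (Grauert, Lemma \ref{lem:stab}, Lemma \ref{unobs}) is routine given the earlier results, but the $\Ext^1$-rank sum and its telescoping into the stated closed form is where care is required, and I would organize it by proving the per-step increment in dimension is $\dim\mathbb{P}_j = \sum_{i\le j}r_ir_{j+1}(g-1)(a_i+\dots+a_j) + \sum_{i\le j}(r_{j+1}d_i - r_id_{j+1})(a_i+\dots+a_j) + \sum_{i\le j}r_ir_{j+1}(g-1) - 1$ and summing over $j$.
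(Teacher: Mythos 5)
Your plan follows the paper's proof essentially verbatim: the same \'etale covers and space $\cU_L$, the same tower of projective extension bundles $\mathbb{P}_1\to\cdots\to\mathbb{P}_{l-1}$, stability via Lemma \ref{lem:stab}, obstructedness via Lemma \ref{unobs}, and the degree by telescoping equation (\ref{degree}). The geometric skeleton is right, and your identification of the $\Ext^1$-rank bookkeeping as the crux is accurate.

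However, the per-step dimension formula you propose to prove is incorrect, and this is precisely the step you flagged as delicate. After twisting, the fiber of $\mathbb{P}_j\to\mathbb{P}_{j-1}$ is $\P\Ext^1_{C\times\P^1}(E_{j+1},\cE'_j\otimes\cO_{\P^1}(a_j))$, and $\cE'_j\otimes\cO_{\P^1}(a_j)$ is an iterated extension of the pieces $E_i\boxtimes\cO_{\P^1}(a_i+\cdots+a_j)$ for $i\le j$. By K\"unneth and Riemann--Roch (using $h^0(E_{j+1}^{\vee}\otimes E_i)=0$ from the slope hypothesis and $h^1(\P^1,\cO(b))=0$ for $b\ge 0$), each piece contributes
\begin{equation*}
\bigl[r_id_{j+1}-r_{j+1}d_i+r_ir_{j+1}(g-1)\bigr]\,(a_i+\cdots+a_j+1),
\end{equation*}
whereas you wrote the degree part as $(r_{j+1}d_i-r_id_{j+1})(a_i+\cdots+a_j)$: the sign is reversed (your expression is negative, since $d_i/r_i<d_{j+1}/r_{j+1}$) and the multiplier is $(a_i+\cdots+a_j)$ instead of $(a_i+\cdots+a_j+1)$. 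With your formula the increments do \emph{not} reorganize into the stated closed form: summing them together with $\dim\cU_L=\dim M-2\sum_{i<j}r_ir_j(g-1)+l-1$ yields $\sum_{i<j}(r_jd_i-r_id_j)(a_i+\cdots+a_{j-1})$ for the degree terms rather than the required $\sum_{i<j}(r_id_j-r_jd_i)(a_i+\cdots+a_{j-1}+1)$, so the claimed "reorganization" fails as written. With the corrected contribution the $(g-1)$-terms and degree-terms both telescope to the formula in the Lemma exactly as in the paper. This is a fixable computational slip rather than a flaw in the approach, but as stated the central calculation does not close.
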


\begin{proof}   
 Denote by  $U(r_i,d_i)$ a suitable cover of the moduli space of vector bundles of rank $r_i$ and degree $d_i$ such that on $C\times U(r_i,d_i)$ a Poincare bundle ${\mathcal E}_i$ exists.
 Consider the space ${\mathcal U}_{L}$ of $l$-ples of bundles with determinant of the product the given $L$. 
 Denote by $\pi_1, \pi_2,\dots,  \pi_l$ the projection of $U(r_1,d_1)\times  U(r_2,d_2)\times \dots \times U(r_l,d_l)$ onto $U(r_1,d_1), U(r_2,d_2), \dots ,U(r_l,d_l)$ respectively
 as well as the restriction of these projections to ${\mathcal U}_{L}$.
Denote by $p_1, p_2, p_3$ the projection of $ C \times {\mathbb P}^1\times {\mathcal U}_{L}$ onto  $ C ,  {\mathbb P}^1, {\mathcal U}_{L}$ respectively.
  Consider the projective bundle over $ C \times {\mathbb P}^1\times {\mathcal U}_{L}$
$$\mathbb{P}_1:= {\mathbb P}(R^{1}{p}_{3\ast}{\cH}om( p_1\times (\pi_{2} \circ p_3))^{\ast}{\mathcal E}_2,
  ( p_1\times (\pi_{1} \circ p_3))^{\ast}{\mathcal E}_1\otimes (  p_2)^*({\mathcal O}_{{\mathbb P}^1}(a_1))).$$
with canonical extension on $ C\times {\mathbb P}^1\times \mathbb{P}_1 $
\[ 0\to ( p_1\times (\pi_{1} \circ p_3))^{\ast} {\mathcal E}_1\otimes   p_2^*{\mathcal O}_{{\mathbb P}^1}(a_1)\to {\mathcal E}_1'\to
 ( p_1\times (\pi_{2} \circ p_3))^{\ast} {\mathcal E}_2\to 0.\] 
 
 We then construct a bundle $\mathbb{P}_{2}$ over $\mathbb{P}_{1}$ by considering extensions of the pull back of ${\mathcal E}_3$ by 
  ${\mathcal E}_1'\boxtimes  {\mathcal O}_{{\mathbb P}^1}(a_2)$. 
  More generally, we construct   $\mathbb{P}_{j}$ as a projective bundle over $  {\mathbb P}_{j-1}, j=2,\dots, l-1$ (we omit pull back maps and write $\boxtimes$ instead):
$$\mathbb{P}_j:= {\mathbb P}(R^{1}{p}_{3\ast}{\cH}om( {\mathcal E}_{j+1},
  {\mathcal E}_j'\boxtimes {\mathcal O}_{{\mathbb P}^1}(a_j))).$$
with canonical extension on $ C\times {\mathbb P}^1\times \mathbb{P}_j $
\begin{equation}\label{canext} 0\to  {\mathcal E}'_{j-1}\boxtimes   {\mathcal O}_{{\mathbb P}^1}(a_j)\to {\mathcal E}_j'\to
  {\mathcal E}_{j+1}\to 0.\end{equation} 
  We will write ${\mathcal E}={\mathcal E}'_{l-1}$
 
From the correspondence between vector bundles on $C\times {\mathbb P}^1$ and rational curves in $M$, $\mathbb{P}_{l-1}$ gives rise to a family of  maps from 
${\mathbb P}^1$ to $M$. 

Our next goal is to compute the dimension of the family we constructed. Note that 
$$\dim(U_{\cL})=\sum_{i=1}^l( r_i^2(g-1)+1)-g=[r^2-1-2\sum_{i\not= j}r_ir_j](g-1)+(l-1)=\dim M-2(g-1)\sum_{i\not= j}r_ir_j+l-1$$
$$  \dim \mathbb{P}_{l-1} =\dim(U_{\cL})+\dim{(\text fib}{\mathbb P}_1\to (U_{\cL}))+\dim{(\text fib}{\mathbb P}_2\to {\mathbb P}_1)+\dots+\
dim{(\text fib}{\mathbb P}_{l-1}\to {\mathbb P}_{l-2}) $$

 The fibers of the projection ${\mathbb P}_1\to {\mathcal U}_{L}$ are 
 $${\mathbb P}(Ext^1_{C\times {\mathbb P}^1}(E_2, E_1\boxtimes {\mathcal O}_{{\mathbb P}^1}(a_1)))=
 {\mathbb P}(H^1(C, E_2^*\otimes E_1)\times H^0({\mathbb P}^1, {\mathcal O}_{{\mathbb P}^1}(a_1))).$$
 As $\mu(E_1)<\mu(E_2)$,  $h^0(C, E_2^*\otimes E_1)=0$.
  Hence, the dimension of the fibers of the projection ${\mathbb P}_1\to {\mathcal U}_{L}$ is
  $$[r_1d_2-r_2d_1+r_1r_2(g-1)](a_1+1)-1$$
  Similarly, the fibers of the projection ${\mathbb P}_2\to {\mathbb P}_1$ are
   ${\mathbb P}(Ext^1_{C\times {\mathbb P}^1}(E_3, {\mathcal E}'_2\boxtimes {\mathcal O}_{{\mathbb P}^1}(a_2)))$.
   In order to compute the dimension of these fibers, we need to use the tautological sequence defining ${\mathcal E}'_2$
   tensored with the pull back of the dual of $E_3$ and $ {\mathcal O}_{{\mathbb P}^1}(a_2)$.
 We omit pull back maps and write $\boxtimes$ instead:
\[ 0\to   E_1\otimes E_3^*\boxtimes  {\mathcal O}_{{\mathbb P}^1}(a_1+a_2)\to 
\mathcal E'_2\boxtimes  E_3^*\boxtimes  {\mathcal O}_{{\mathbb P}^1}(a_2)
\to  E_2\otimes   E_3^*\boxtimes  {\mathcal O}_{{\mathbb P}^1}(a_2)\to 0\]  
   We obtain that 
$$\dim Ext^1_{C\times {\mathbb P}^1}(E_3, {\mathcal E}'_2\boxtimes {\mathcal O}_{{\mathbb P}^1}(a_2))=
h^1(C, E_3^*\otimes E_1)h^0({\mathbb P}^1, {\mathcal O}_{{\mathbb P}^1}(a_1+a_2))+
h^1(C, E_3^*\otimes E_2)h^0({\mathbb P}^1, {\mathcal O}_{{\mathbb P}^1}(a_2))=$$
  $$=[r_1d_3-r_3d_1+r_1r_2(g-1)](a_1+a_2+1)+[r_2d_3-r_3d_2+r_2r_3(g-1)](a_1+1)-1$$
The dimension of the remaining fibers would be computed similarly.

Therefore,  the dimension of the family is 
$$\dim M-2(g-1)\sum_{i\not= j}r_ir_j+l-1+[r_1d_2-r_2d_1+r_1r_2(g-1)](a_1+1)-1+$$
$$+[r_1d_3-r_3d_1+r_1r_3(g-1)](a_1+a_2+1)+[r_2d_3-r_3d_2+r_2r_3(g-1)](a_1+1)-1]$$
$$+\dots +[r_1d_l-r_ld_1+r_1r_l(g-1)](a_1+\dots +a_{l-1}+1)+\dots +[r_{l-1}d_l-r_ld_{l-1}+r_{l-1}r_l(g-1)](a_{l-1}+1)-1 =$$
$$=\dim M+\sum_{i<j}(r_id_j-r_jd_i)(a_i+a_{i+1}+\dots +a_{j-1}+1)+[\sum_{i<j}r_ir_j(a_i+a_{i+1}+\dots +a_{j-1}-1)](g-1)$$

We can compute the degree using  equation (\ref{degree}).
 Multiplying by $h$ and using that 
 $$d=d_1+d_2+\dots +d_l, r=r_1+r_2+\dots +r_l $$
 we obtain 
  $$hk=(r_1 d-d_1r)a_1+((r_1+r_2)d-(d_1+d_2)r)a_2+\dots + ((r_1+\dots +r_{l-1})d-(d_1+\dots+d_{l-1})r)a_{l-1}=$$
  $$=(r_1 d_2-r_2d_1)a_1+(r_1 d_3-r_3d_1)(a_1+a_2)+\dots +(r_1 d_l-r_ld_1)(a_1+\dots +a_{l-1})+$$
$$+(r_2d_3-r_3d_2 )a_2+\dots +(r_2 d_l-r_ld_2)(a_2+\dots +a_{l-1})  \dots  +(r_{l-1} d_l-r_ld_{l-1})a_{l-1}=$$
$$=\sum_{i<j}(r_id_j-r_jd_i)(a_i+a_{i+1}+\dots+a_{j-1})$$

From Lemma \ref{unobs}, the family we are constructing is obstructed if $l\ge 3$ or $a_i\ge 2$
\end{proof}

\begin{thm} 
	\label{comp2se} 
		If the family described in Lemma \ref{fam2se} is an (obstructed) component of the space of maps from ${\mathbb P}^1$ to $M$, 
 		then a vector bundle in the image rational curve in $M$ is not generic.
\end{thm}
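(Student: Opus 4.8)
The plan is to show that the dimension computed in Lemma \ref{fam2se} is strictly larger than the expected dimension $\dim M + 2hk$ whenever $l \geq 2$ (and in particular whenever $l \geq 3$), \emph{unless} the parameters $(r_i,d_i)$ are constrained in a way that forces the subbundles appearing in the filtration (\ref{ext3}) to have Segre invariant below the generic bound $r_i(r-r_i)(g-1)$ — which by Proposition \ref{Lange} and the inclusion (\ref{inclLanloc}) is exactly the statement that the corresponding vector bundle on $C$ is special. So the strategy mirrors the proof of Theorem \ref{excomp}, but now the partial filtration gives several inequalities rather than one. First I would subtract: setting
\[
\delta := \dim M + \sum_{i<j}(r_id_j-r_jd_i)(a_i+\cdots+a_{j-1}+1) + \Bigl[\sum_{i<j}r_ir_j(a_i+\cdots+a_{j-1}-1)\Bigr](g-1) - (\dim M + 2hk),
\]
and using $hk = \sum_{i<j}(r_id_j-r_jd_i)(a_i+\cdots+a_{j-1})$ from the lemma, one gets
\[
\delta = \sum_{i<j}(r_id_j-r_jd_i)\bigl(1 - (a_i+\cdots+a_{j-1})\bigr) + \Bigl[\sum_{i<j}r_ir_j(a_i+\cdots+a_{j-1}-1)\Bigr](g-1),
\]
i.e. $\delta = \sum_{i<j}\bigl(a_i+\cdots+a_{j-1}-1\bigr)\bigl[r_ir_j(g-1) - (r_id_j-r_jd_i)\bigr]$.

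The key observation is then that for a component (a locus of maximal-possible, hence $\geq$ expected, dimension) we need $\delta \geq 0$. Since every $a_m \geq 1$, each factor $a_i+\cdots+a_{j-1}-1 \geq 0$, with strict positivity for at least one pair $(i,j)$ as soon as some $a_m \geq 2$ or $l \geq 3$ (in the genuinely obstructed range). Therefore $\delta \geq 0$ can only hold if $r_ir_j(g-1) - (r_id_j - r_jd_i) \geq 0$ for at least one such pair — and in the borderline case one needs equality or near-equality for enough pairs. The second step is to translate the inequality $r_id_j - r_jd_i \leq r_ir_j(g-1)$ for the relevant pair into a statement about the bundle $E$ on $C$: the subbundle $\mathcal E_i' \subset \mathcal E$ restricts on each fiber $C \times \{y\}$ to a subbundle of rank $r_1+\cdots+r_i$ whose degree difference against $E$ realizes a Segre invariant $s_{r_1+\cdots+r_i}(E) \leq r_id_j-r_jd_i$-type quantity bounded by $\bigl(\sum_{m\le i}r_m\bigr)\bigl(\sum_{m>i}r_m\bigr)(g-1)$ being achievable only on the Segre locus; by Proposition \ref{Lange} that locus is a proper closed subvariety of $M$, so the generic point on the rational curve is a non-generic bundle. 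I would run this argument for the smallest index $i$ for which the associated factor is strictly positive, deduce that $E$ lies in a proper Segre locus for that rank, and conclude.

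A cleaner packaging, which I would actually write out, is: assume for contradiction that the family is a component and that its generic member is a generic stable bundle; then $E$ generic forces $s_{r'}(E) \geq r'(r-r')(g-1)$ for all $r'$, in particular $r_id_j - r_jd_i \geq$ (something $\geq$) is impossible to be $\leq r_ir_j(g-1)$ strictly — more precisely the genericity of $E$ together with $\mu(E_1) < \cdots < \mu(E_l)$ forces, via Proposition \ref{Lange}, that each $r_id_j - r_jd_i$ is \emph{at least} the corresponding product $(g-1)$-bound, making every bracket $r_ir_j(g-1) - (r_id_j-r_jd_i) \leq 0$; combined with $a_i+\cdots+a_{j-1}-1 \geq 0$ this gives $\delta \leq 0$, with $\delta = 0$ only if every term vanishes, which fails when $l \geq 3$ or some $a_m \geq 2$. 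Hence $\delta < 0$, contradicting that the family is a component (dimension $<$ expected $=$ minimum dimension of a component). I would note the edge case where all brackets vanish \emph{and} all $a$-factors vanish is precisely $l=2$, $a=1$, i.e. the unobstructed families of Theorem \ref{teornumcomp}, so no contradiction there and nothing to prove.

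The main obstacle I anticipate is the bookkeeping in the reduction from the multi-step filtration to a Segre-invariant statement: I need to argue that if the family \emph{were} a component with generic bundle generic, then for \emph{every} intermediate rank $r' = r_1+\cdots+r_i$ the invariant $s_{r'}$ of the generic $E$ equals the generic value, and then check that this is genuinely incompatible with $\delta \geq 0$ — the subtlety is that the $d_i$ are not determined by $k$ alone, so I must verify the incompatibility holds for \emph{all} admissible $(d_i)$ with the prescribed slope ordering, not just a convenient choice. The inequality $r_id_j - r_jd_i \geq r_ir_j(g-1)$ for generic $E$ is the crucial input and it is exactly what Proposition \ref{Lange} supplies (the stratum $S_{(r',s)}$ is a proper subvariety precisely for $s \leq r'(r-r')(g-1)$, equalling $U(r,d)$ only for larger $s$); once that is in hand the sign computation for $\delta$ is immediate, so the real work is making the correspondence between the fiberwise filtration of $\mathcal E$ and subbundles of the generic $E\in M$ precise.
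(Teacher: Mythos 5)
Your dimension bookkeeping is correct and agrees with the paper: the condition for the family to be a component reduces to $\sum_{i<j}(a_i+\cdots+a_{j-1}-1)\,A_{ij}\le 0$, where $A_{ij}=r_id_j-r_jd_i-r_ir_j(g-1)$. The gap is in the step where you claim that genericity of $E$ forces $A_{ij}\ge 0$ for \emph{every} pair $i<j$ ``via Proposition \ref{Lange}.'' Proposition \ref{Lange} constrains only the Segre invariants of $E$, i.e.\ only its \emph{subbundles}. The subbundles supplied by the filtration (\ref{ext3}) are the partial sums $E'_j$ of rank $r_1+\cdots+r_j$ and degree $d_1+\cdots+d_j$, so genericity of $E$ yields only the $l-1$ inequalities $\sum_{i\le j<k}A_{ik}\ge 0$ (the paper's (\ref{inj})), not the individual inequalities $A_{ik}\ge 0$. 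For $1<i$ the bundle $E_i$ is merely a subquotient of $E$, the quantity $r_id_j-r_jd_i$ is not a Segre invariant of $E$, and nothing prevents some $A_{ij}$ from being negative while all partial sums are nonnegative. Your alternative packaging (``run the argument for the smallest index whose factor is strictly positive'') suffers from the same defect: a nonpositive bracket for a pair $(i,j)$ that does not correspond to a partition $\{1,\dots,m\}\cup\{m+1,\dots,l\}$ of the filtration does not place $E$ in a proper Segre stratum. Moreover, even granting your (unjustified) premise, you would only obtain $\delta\le 0$ and would still have to exclude $\delta=0$, which you assert but do not prove.

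This is precisely the difficulty the paper's Claim is designed to resolve. Writing $a_i+\cdots+a_{j-1}-1=\sum_{m=i}^{j-1}(a_m-1)+(j-i-1)$, the coefficient of each $(a_m-1)\ge 0$ is again a partial-sum expression, hence nonnegative by (\ref{inj}); the leftover term is $\sum_{i<j-1}(j-i-1)A_{ij}$, which is \emph{not} termwise controlled. The Claim handles it by taking the combination of the inequalities (\ref{inj}) with weights $r_1+\cdots+r_{j-1}+r_{j+2}+\cdots+r_l$ and repeatedly applying the identity $r_pA_{mn}+r_mA_{np}=r_nA_{mp}-r_mr_nr_p(g-1)$ for $m<n<p$, arriving at the \emph{strict} bound $\sum_{i<j-1}(j-i-1)A_{ij}\ge \frac{g-1}{r}\sum_{m<n<p}r_mr_nr_p>0$, which contradicts the component condition whenever $l\ge 3$. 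You need to replace your pairwise Segre bounds by the partial-sum bounds and supply some such weighted-sum computation; as written, the key inequality your argument rests on is not available.
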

 
 \begin{proof}
For a family as in \ref{fam2se} to be a component of the Hilbert scheme of maps of ${\mathbb P}^1$ to $M$, 
its dimension needs to be at least as large as the expected dimension $\dim M+2hk$.
 This condition is 
 $$\dim M+\sum_{i<j}(r_id_j-r_jd_i)(a_i+a_{i+1}+\dots +a_{j-1}+1)+[\sum_{i<j}r_ir_j(a_i+a_{i+1}+\dots +a_{j-1}-1)](g-1)\ge$$ 
$$\dim M+2\sum_{i<j}(r_id_j-r_jd_i)(a_i+a_{i+1}+\dots+a_{j-1})$$
 This can be rewritten as 
  $$\sum_{i<j}(r_id_j-r_jd_i-r_ir_j(g-1))(a_i+a_{i+1}+\dots +a_{j-1}-1)\le 0$$
  Using that 
  $$a_i+a_{i+1}+\dots +a_{j-1}-1=(a_i-1)+(a_{i+1}-1)+\dots +(a_{j-1}-1)+(j-i-1)$$
  and taking common factor the $a_i-1$, we obtain
 $$(a_1-1)\sum_{i=2}^l (r_1d_i-r_id_1-r_ir_1(g-1))+(a_2-1)[\sum_{i=3}^l [r_1d_i-r_id_1-r_ir_1(g-1)+r_2d_i-r_id_2-r_ir_2(g-1)]+\dots$$
 $$\dots+(a_j-1))[\sum_{i=j+1}^l\sum _{k=1}^j (r_kd_i-r_id_k-r_ir_k(g-1))+\dots +(a_{l-1}-1)\sum _{k=1}^l(r_kd_l-r_ld_k-r_lr_k(g-1))+$$
  $$+\sum_{i<j-1}(j-i-1)(r_id_j-r_jd_i-r_ir_j(g-1))\le 0$$
  Regrouping the terms, this gives rise to 
 \begin{equation}
 	\label{longineq}
		(a_1-1)[r_1\sum_{i=2}^l d_i-\sum_{i=2}^lr_id_1-r_1\sum_{i=2}^lr_i(g-1)] 
\end{equation}
\begin{equation*}
		+ (a_2-1)[(r_1+r_2)\sum_{i=3}^ld_i-\sum_{i=3}^lr_i(d_1+d_2)-(r_1+r_2)\sum_{i=3}^lr_i(g-1)]+\dots
\end{equation*}
 $$\dots+(a_j-1)[ (\sum _{i=1}^j r_i) (\sum_{k=j+1}^ld_k)- (\sum_{k=j+1}^lr_k)( \sum _{i=1}^j d_i)- (\sum_{k=j+1}^lr_k)  (\sum _{i=1}^j r_i) (g-1))+\dots $$
$$ \dots +(a_{l-1}-1)(\sum _{k=1}^{l-1}r_k)d_l-r_l(\sum _{k=1}^{l-1}d_k)-r_l(\sum _{k=1}^{l-1}r_k)(g-1))$$
 $$+\sum_{i<j-1}(j-i-1)(r_id_j-r_jd_i-r_ir_j(g-1))\le 0$$
 \bigskip
 
 For every point $(x, y)\in  {\mathbb P}^1\times {\mathbb P}_{l-1}$, we obtain a vector bundle on $C$ by considering the restriction 
 $E={\mathcal E}_{|C\times \{ x\}\times \{ y\}}$.
 We want to show that under the above conditions, $E$ is not generic in $M$ for generic $(x, y)\in  {\mathbb P}^1\times {\mathbb P}_{l-1}$.
 
 From  exact sequence (\ref{canext}) for $j=l-1$, $E$ is an extension of $E_l$ by $E'_{l-1}={\mathcal E}'_{l-1||C\times \{ x\}\times \{ y\}}$.
 The latter is a vector bundle of rank $r_1+\dots+r_{l-1}$ and degree $d_1+\dots +d_{l-1}$.
 From Propositon \ref{Lange}, if $E$ is not special , we have 
 
 \begin{equation*}
 	\label{inl-1} 
		\sum _{k=1}^{l-1}r_kd_l-r_l(\sum _{k=1}^{l-1}d_k)-r_l(\sum _{k=1}^{l-1}r_k)(g-1)\ge 0
\end{equation*}

More generally,  write  $E'_{j}={\mathcal E}'_{j||C\times \{ x\}\times \{ y\}}$. 
 Then  $E'_{j}$ is a vector bundle of rank $r_1+\dots+r_{j}$ and degree $d_1+\dots +d_{j}$.  
 Assembling together the injective maps from (\ref{canext}) for $j, j+1, \dots l-1$, 
 we obtain an inclusion $E'_{j}\to E$ whose cokernel has rank $r_{j+1}+\dots+r_{l}$ and degree $d_{j+1}+\dots +d_{l}$.
 From Proposition \ref{Lange}, if $E$ is not special, we have 

 \begin{equation}
 	\label{inj}  
		(\sum _{i=1}^j r_i) (\sum_{k=j+1}^ld_k)- (\sum_{k=j+1}^lr_k)( \sum _{i=1}^j d_i)- (\sum_{k=j+1}^lr_k)  (\sum _{i=1}^j r_i) (g-1))\ge 0. 
\end{equation}

\begin{claim} 
	Multiplying equation (\ref{inj}) by $r_1+\dots +r_{j-1}+r_{j+2}+\dots +r_l$ and adding for $j=1,\dots, l-1$, we obtain
  	\begin{equation}
		\label{insum} 
			\sum_{i<j-1}(j-i-1)[r_id_j-r_jd_i-r_ir_j(g-1)]\ge \frac{g-1}r\sum_{1\le m<n<p\le l}r_mr_nr_p>0   
	\end{equation}
  \end{claim}
  
 Note then that  inequalities  (\ref{inj}) and (\ref{insum})  are incompatible with (\ref{longineq}). This will complete the proof of the Theorem.
 
 \begin{proof} (of the claim) For $i<j$, write $A_{ik}=r_id_k-r_kd_i-r_ir_k(g-1)$.
 Note that 
 \begin{equation}\label{IdentAs} \text{If  } l<m<n, \  r_nA_{lm}-r_lA_{mn}=r_mA_{ln}-r_lr_mr_n(g-1)\end{equation}
 Multiplying equation (\ref{inj}) by $r_1+\dots +r_{j-1}+r_{j+2}+\dots +r_l$ and adding for $j=1,\dots, l-1$, we obtain
 \[  \sum_{j=1}^{l-1} (r_1+\dots +r_{j-1}+r_{j+2}+\dots +r_l)[ \sum _{i=1}^j \sum_{k=j+1}^lA_{ik}]\ge 0. \]
 This can be written as 
\begin{equation}\label{suin}\sum_{1\le i<k\le l} [(k-i)(r_1+\dots +r_{i-1})+(k-i-1)r_i +(k-i-2)(r_{i+1}+ \cdots + r_{k-1})
\end{equation}
\begin{equation*}
 + \cdots +(k-i-1)r_{k}+(k-i)(r_{k+1}+\dots +r_{l})]A_{ik} \ge 0\end{equation*}
 Note that 
\[ \text{If  } m<n<p, \  \ \ r_pA_{mn}+r_mA_{np}=r_nA_{mp}-r_mr_nr_p(g-1).\]
Therefore, for $t>k$, $r_tA_{ik}$ can be combined with $r_iA_{kt}$ to give rise to $r_kA_{it}-r_ir_kr_t(g-1)$.
 When  the process is carried out for all $i,k, i<k$ once for each $t>k$, , one of the terms $r_sA_{ik}$ will be used up for each $s<i$
  and new terms $r_vA_{ik}$ will be gained for $i<v<k$.
  Then, inequality (\ref{suin}) becomes
 
 \[  \sum_{1\le i<k\le l} (k-i-1)rA_{ik}-(g-1)\sum_{1\le m<n<p\le l}r_mr_nr_p  \ge  0. \]
 as claimed.
 \end{proof}
  \end{proof}

\section{Rational curves through two generic points}\label{secratc2points}

A question of interest in the study of rational curves on Fano varieties is the minimum degree of a rational curve through two generic points. 
In \cite{KMM}. Kollar, Miyaoka and Mori showed that the degree is bounded by a quadratic expression on the dimension. 
For $M$, this bound can be greatly improved and is in fact linear on the dimension.

\begin{prop} 
	\label{prop:rat-conn}
	Given two generic points of $M$, there is a rational curve  containing the two points of degree $(\frac{r^2}2-1)(g-1)$ if $r$ is even and degree $\frac{3r^2-3}2(g-1)$ if $r$ is odd.
\end{prop}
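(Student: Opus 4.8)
The plan is to connect two generic points $[E_0],[E_1]\in M$ by a rational curve assembled from the unobstructed families already constructed, using the classical fact that through two generic points of $\P^n$ (here a fiber of an extension space) there is a line, and that generic stable bundles move in such families. First I would treat the even rank case: write $r=2r_1$ with $r_1=r/2$, choose generic stable bundles $E_1',E_1''$ and $E_2',E_2''$ of rank $r_1$ so that $E_0$ sits in an extension $0\to E_1'\to E_0\to E_1''\to 0$ and $E_1$ in $0\to E_2'\to E_1\to E_2''\to 0$ — possible since, by Lemma~\ref{lem:stab} together with Proposition~\ref{Lange}, a generic bundle of rank $r$ is such an extension of generic rank-$r_1$ bundles once $r_1(r-r_1)(g-1)$ does not exceed the Segre invariant, which holds at the generic point. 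Then I would use the family in Proposition~\ref{dimfamext} (with $r_1=r/2$), whose members are rational curves of degree $k=\bar d r_1-d_1\bar r$; I would show that one can choose $d_1$ and the sub/quotient bundles so that the corresponding line in the extension space $\mathbb{P}$ passes through both $[E_0]$ and $[E_1]$, exploiting that $\dim\mathbb{P}(\mathrm{Ext}^1)\geq 1$ and that generic points of $M$ lying in the image of the family correspond to generic points of the base $\mathcal U_L$ and generic points of the fibers.

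For the degree count in the even case: with $r_1=r-r_1=r/2$ and $h=\gcd(r,d)$, a minimal-degree such connecting line has $hk=r_1d-rd_1=r_1(d-2d_1)$; choosing $d_1$ so that $r_1d-rd_1$ is as small as possible consistent with $E_0,E_1$ being generic forces $r_1d-rd_1 = r_1\cdot r_1(g-1)\cdot(\text{something})$, and the bookkeeping — using $\dim M=(r^2-1)(g-1)$ and that a line through two points of an extension space of bundles with Segre invariant exactly $r_1(r-r_1)(g-1)$ is what is needed — should collapse to $k=(\tfrac{r^2}{2}-1)(g-1)=\tfrac{r_1^2\cdot 4 - 4}{4}(g-1)$; I would verify this by substituting $r=2r_1$ into $r_1(r-r_1)(g-1)$ and comparing with the boundary case $s=r_1(r-r_1)(g-1)$ of Proposition~\ref{Lange}, where the Segre locus is all of $U(r,d)$ and hence contains the generic bundle. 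The odd rank case is handled by a two-step construction: one cannot split $r$ into two equal pieces, so I would connect $[E_0]$ to an intermediate generic bundle by a family of the type in Lemma~\ref{fam2se} (or compose two curves from Proposition~\ref{dimfamext} with complementary rank splittings $r_1=(r-1)/2$ and $r_1=(r+1)/2$), and then connect that intermediate point to $[E_1]$; summing the two (minimal) degrees and optimizing over the splitting gives $\tfrac{3r^2-3}{2}(g-1)=3\cdot\tfrac{r^2-1}{2}(g-1)$.

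The main obstacle I anticipate is the "genericity transfer" step: showing that the two prescribed generic points $[E_0],[E_1]$ can simultaneously be realized as points on a single irreducible rational curve from the chosen family, rather than merely that each lies on some curve of the family. Concretely, I would need that the evaluation/incidence correspondence $\{(C',x_0,x_1): x_0,x_1\in C'\}\to M\times M$ is dominant, which amounts to a dimension count: the family of lines in $\mathbb{P}$ has dimension $\dim M+2hk$, two point-conditions cut this down by $2\dim M$, and one must check the residual dimension is $\geq 0$ precisely when $k$ reaches the claimed value — this is where the specific numbers $(\tfrac{r^2}{2}-1)(g-1)$ and $\tfrac{3r^2-3}{2}(g-1)$ come from. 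A secondary subtlety is ensuring the connecting curve is irreducible (not just a chain); for that I would invoke that a general line in a projective bundle, through two general fibers' worth of points, is irreducible, and that by Lemma~\ref{lem:stab} the general such line lies in the stable locus so actually maps to $M$. The rest is the Riemann–Roch / Segre-invariant arithmetic, which is routine given Propositions~\ref{Lange} and~\ref{dimfamext}.
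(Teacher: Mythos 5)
Your overall strategy --- joining the two points by a line in a projective space of extensions --- is the same as the paper's, but the crucial step is not carried out. The whole point is that the two generic bundles must be extensions of the \emph{same} quotient $E''$ by the \emph{same} subbundle $E'$, so that their two extension classes lie in a single vector space $\Ext^{1}(E'',E')$ and can be joined by a line there. The paper obtains this by requiring that, for a \emph{fixed} generic pair $(E',E'')$, the map from $\Ext^{1}(E'',E')$ to $M$ be dominant; since by \cite{RT} the fibers of this map have the minimal possible dimension, this amounts to $\dim \Ext^{1}(E'',E') = r'd''-r''d'+r'r''(g-1)\ge \dim M$, i.e.\ $hk\ge (r^2-1-r'(r-r'))(g-1)$, and the stated degrees are then obtained by minimizing the right-hand side over $r'$. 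Your substitute --- the incidence count ``$\dim M+2hk$ minus $2\dim M\ge 0$'' on the space of lines --- is a different and strictly weaker inequality ($2hk\ge \dim M$ versus $hk\ge (r^2-1-r'r'')(g-1)$, and $r'r''\le r^2/4<(r^2-1)/2$ for $r\ge 2$), and it is in any case only a necessary condition: nonnegativity of the expected fiber dimension of the two-point evaluation map does not yield dominance unless you also prove the relevant curves are very free, which you do not address. So your argument neither produces the common pair $(E',E'')$ nor recovers the claimed degrees.

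The second gap is the odd case. Splitting $r$ into two equal pieces is not needed anywhere: the paper takes $r'=(r\pm 1)/2$ and runs the identical one-step argument, which already yields a single irreducible line in $\P\Ext^{1}(E'',E')$ through both points. Your two-step construction through an intermediate bundle produces a chain of two rational curves, not ``a rational curve containing the two points,'' and the smoothing/irreducibility issue you yourself flag is real and left unresolved. Relatedly, the degree bookkeeping in both cases is deferred (``should collapse to\dots''); since the numerical answer is exactly the minimum of the dominance bound over $r'$, it cannot be verified without first having the correct inequality in hand.
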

 \begin{proof} Given $E_1, E_2\in M$ generic, we want to find $E', E''$ such that we have exact sequences
 $$0\to E'\to E_1\to E''\to 0,\ 0\to E'\to E_2\to E''\to 0$$
 If these extensions exist, then  there is a line in the projective space of extensions containing the two given ones and its image is a rational curve in $M$ containing both $E_1, E_2$.

The dimension of the space of extensions for a fixed $E', E'' $ is $r'd''-r''d'+r'r''(g-1)$. 
Moreover, the dimension of the fibers of the map of the space of extensions to $M$ are well behaved, that is as small dimensional as possible (\cite{RT}).
Therefore, we need only $r'd''-r''d'+r'r''(g-1)\ge (r^2-1)(g-1)$. 
Equivalently, 
$$hk=r'd-rd'\ge (r^2-1-r'(r-r'))(g-1).$$
The smallest value of the right hand side is obtained for $r'$ as close as possible to $\frac r2$ which gives the statement in the proposition.
\end{proof}



\end{document}